\renewcommand{\eprint}[1]{\href{https://arxiv.org/abs/#1}{arXiv:#1}}
\newcommand{\pageafter}[1]{#1~pp.}
\newtheorem{thm}{Theorem}
\newtheorem{lem}[thm]{Lemma}
\newtheorem{cor}[thm]{Corollary}
\newtheorem{prop}[thm]{Proposition}
\newtheorem{clm}[thm]{Claim}
\newtheorem{fact}[thm]{Fact}
\newtheorem{defn}[thm]{Definition}
\newtheorem{rem}[thm]{Remark}
\newtheorem{obs}[thm]{Observation}
\newtheorem{ass*}{Assumption}
\numberwithin{thm}{section}
\numberwithin{equation}{section}
\setlist[itemize]{leftmargin=*}
\setlist[enumerate]{leftmargin=*,label=(\arabic*),ref=(\arabic*)}
\newcommand{\cA}{\ensuremath{\mathcal A}}
\newcommand{\cB}{\ensuremath{\mathcal B}}
\newcommand{\cC}{\ensuremath{\mathcal C}}
\newcommand{\cD}{\ensuremath{\mathcal D}}
\newcommand{\cE}{\ensuremath{\mathcal E}}
\newcommand{\cF}{\ensuremath{\mathcal F}}
\newcommand{\cG}{\ensuremath{\mathcal G}}
\newcommand{\cH}{\ensuremath{\mathcal H}}
\newcommand{\cI}{\ensuremath{\mathcal I}}
\newcommand{\cK}{\ensuremath{\mathcal K}}
\newcommand{\cL}{\ensuremath{\mathcal L}}
\newcommand{\cM}{\ensuremath{\mathcal M}}
\newcommand{\cN}{\ensuremath{\mathcal N}}
\newcommand{\cS}{\ensuremath{\mathcal S}}
\newcommand{\cT}{\ensuremath{\mathcal T}}
\newcommand{\cU}{\ensuremath{\mathcal U}}
\newcommand{\bbE}{{\ensuremath{\mathbb E}} }
\newcommand{\bbN}{{\ensuremath{\mathbb N}} }
\newcommand{\bbP}{{\ensuremath{\mathbb P}} }
\newcommand{\bbR}{{\ensuremath{\mathbb R}} }
\newcommand{\bbT}{{\ensuremath{\mathbb T}} }
\newcommand{\bbX}{{\ensuremath{\mathbb X}} }
\newcommand{\bbZ}{{\ensuremath{\mathbb Z}} }
\newcommand{\bone}{{\ensuremath{\mathbf 1}} }
    \let\d=\delta  \let\e=\varepsilon
\let\f=\varphi \let\g=\gamma \let\h=\eta    \let\k=\kappa  \let\l=\lambda
\let\m=\mu      \let\o=\omega    \let\p=\pi  
\let\r=\rho  \let\s=\sigma \let\t=\tau   
 \let\x=\xi 
\let\D=\Delta     \let\L=\Lambda 
\let\O=\Omega
\newcommand{\tc}{\thinspace |\thinspace}
\newcommand{\var}{\operatorname{Var}}
\newcommand{\1}{{\ensuremath{\mathbbm{1}}} }
\newcommand{\trelt}{\widetilde T_{\mathrm{rel}}}
\newcommand{\tbp}{{\ensuremath{\t_0^{\mathrm{BP}}}} }
\newcommand{\SG}{\cS\cG}
\newcommand{\Da}{\cD_{\mathrm{aux}}}
\newcommand{\Ta}{T_{\mathrm{aux}}}
\newcommand{\rd}{\r_{\mathrm{D}}}
\newcommand{\Ld}{L_{\mathrm{D}}}
\DeclareMathOperator{\Dom}{Dom}
\DeclareDocumentCommand \tmix { o } {%
  \IfNoValueTF {#1} {{\ensuremath{T_{\mathrm{mix}}}} }{{\ensuremath{T_{\mathrm{mix}}^{\mathrm{\scriptstyle{#1}}}}}}%
}
\DeclareDocumentCommand \trel { o } {%
  \IfNoValueTF {#1} {{\ensuremath{T_{\mathrm{rel}}}} }{{\ensuremath{T_{\mathrm{rel}}^{\mathrm{\scriptstyle{#1}}}}}}%
}
\renewcommand{\leq}{\leqslant}
\renewcommand{\geq}{\geqslant}
\renewcommand{\le}{\leqslant}
\renewcommand{\ge}{\geqslant}
\renewcommand{\to}{\rightarrow}
\journalname{Probability Theory and Related Fields}
\begin{document}

\title{Sharp threshold for the FA-2f kinetically constrained model
\thanks{This work is supported by ERC Starting Grant 680275 ``MALIG'', ANR-15-CE40-0020-01 and PRIN 20155PAWZB ``Large Scale Random Structures.''}
}
\titlerunning{FA-2f kinetically constrained model}

\author{Ivailo Hartarsky
\and 
Fabio Martinelli
\and
Cristina Toninelli}

\authorrunning{I. Hartarsky, F. Martinelli and C. Toninelli} 

\institute{I. Hartarsky, Corresponding author \at CEREMADE, CNRS, UMR 7534, Universit\'e Paris-Dauphine, PSL University, Place du Mar\'echal de Lattre de Tassigny, 75016 Paris, France\\ \email{ivailo.hartarsky@tuwien.ac.at}
           \and
           F. Martinelli \at Dipartimento di Matematica e Fisica, Universit\`a Roma Tre, Largo S.L. Murialdo, 00146 Roma, Italy\\
           \email{martin@mat.uniroma3.it}
           \and
           C. Toninelli \at CEREMADE, CNRS, UMR 7534, Universit\'e Paris-Dauphine, PSL University, Place du Mar\'echal de Lattre de Tassigny, 75016 Paris, France\\
           \email{toninelli@ceremade.dauphine.fr}
}

\date{Received: date / Accepted: date}

\maketitle
\begin{abstract}
The Fredrickson-Andersen 2-spin facilitated model on $\bbZ^d$ (FA-2f)
is a paradigmatic interacting particle system with kinetic constraints (KCM)
featuring \emph{dynamical facilitation}, an important mechanism in condensed matter physics. 
In FA-2f a site may change its state only if at least two of its nearest neighbours are empty. Although the process is reversible w.r.t.\ a product Bernoulli measure, it is not \emph{attractive} and features degenerate jump rates and anomalous divergence of characteristic time scales as the density $q$ of empty sites tends to $0$. A natural random variable encoding the above features is $\t_0$, the first time at which the origin becomes empty for the stationary process. Our  
main result is the sharp threshold 
\[\t_0=\exp\Big(\frac{d\cdot\l(d,2)+o(1)}{q^{1/(d-1)}}\Big)\quad \text{w.h.p.}\]
with $\l(d,2)$ the sharp threshold constant for 2-neighbour bootstrap percolation on $\bbZ^d$, the monotone deterministic automaton counterpart of FA-2f.
This is the first sharp result for a critical KCM
and it compares with Holroyd's 2003 result on bootstrap percolation and its subsequent improvements. It also settles various controversies accumulated in the physics literature over the last four decades. Furthermore, our novel techniques enable  completing the recent ambitious program on the universality phenomenon for critical KCM and establishing sharp thresholds for other two-dimensional KCM.

\keywords{Kinetically constrained models \and Interacting particle systems \and Sharp threshold \and Bootstrap percolation \and Glauber dynamics \and Poincar\'e inequality}
\subclass{60K35 \and 82C22 \and 60J27 \and 60C05}
\end{abstract}

\section{Introduction} 
Fredrickson-Andersen $j$-spin facilitated models (FA-$j$f) are a class of interacting particle systems that were introduced by physicists in the 1980s \cite{Fredrickson84} to model the liquid/glass transition, a major and still largely open problem in condensed matter physics \cites{Berthier11,Arceri20}. Later on, several  models with different update rules were introduced, and this larger class has been dubbed  \emph{Kinetically Constrained Models} (KCM)  (see \emph{e.g.}\ \cite{Garrahan11} and references therein). The key feature of KCM is that an update at a given vertex $x$ can occur only if a suitable neighbourhood of $x$ contains only holes, the facilitating vertices. The presence of this dynamical constraint gives rise to a mechanism dubbed \emph{dynamical facilitation} \cite{Speck19} in condensed matter physics: motion on smaller scales begets motion on larger scales. Extensive numerical simulations indicate that indeed KCM can display a remarkable glassy behaviour, featuring in particular an anomalous divergence of characteristic time scales. 
As a good representative of a random variable whose law encodes the above behaviour one could take $\t_0$, the first time the origin becomes a hole (or infected, in the jargon of the sequel). In the last forty years physicists have put forward several different conjectures on the scaling of $\t_0$ as the equilibrium density of the  holes goes to zero for FA-$j$f models. However, to date  a clear cut answer on the form of this scaling has proved elusive  
due to the very slow dynamics and large finite size effects intrinsic to its glassy dynamics. 

From the mathematical point of view, the study of FA-$j$f and KCM in general poses very challenging problems. 
This is largely due to the fact that these models do \emph{not} feature an \emph{attractive dynamics} (in the sense of \cite{Liggett05}*{Chapter III}), and therefore  many of the powerful tools developed to study attractive stochastic spin dynamics, \emph{e.g.}\ monotone coupling or censoring, cannot be used. A central issue has been therefore that of developing novel mathematical tools to determine the long time behaviour of the stationary process and, more specifically, to find the scaling of  the associated infection time of the origin, $\t_0$ in the sequel, as the density $q$ of the empty sites (the facilitating ones) shrinks to zero. 

With this motivation, an ambitious program was recently initiated in \cite{Martinelli19} to determine as accurately as possible the divergence of the infection time for the stationary process, as $q\to 0$ for the FA-$j$f models in any dimension and for general KCM in two dimensions. This program mirrors in some aspects the analogous program for general $\cU$-bootstrap percolation cellular automata ($\cU$-BP) launched by \cite{Bollobas15} and carried out in \cites{Bollobas14,Balister16} and for $j$-neighbour bootstrap percolation \cites{Holroyd03,Balogh12,Gravner08,Hartarsky19}. Indeed  $\cU$-BP models and $j$-neighbour bootstrap percolation can be viewed as the monotone deterministic counterparts of generic KCM and FA-$j$f models respectively. Despite the above analogy, the lack of monotonicity for KCM induces a much more complex behaviour and richer universality classes than BP \cites{Martinelli19a,Mareche20Duarte,Hartarsky21a,Hartarsky20,Hartarsky22univlower,Hartarsky20II}. 

In spite of several important advances \cites{Cancrini08,Martinelli19,Martinelli19a,Hartarsky21a,Mareche20Duarte,Hartarsky20}, the \emph{sharp} estimates of the divergence of $\t_0$ for stationary KCM still remained a  milestone open problem. Solving it requires discovering  
the optimal infection/healing mechanism to reach the origin and crafting 
the mathematical tools to transform  
the knowledge of this mechanism into tight upper and lower bounds for $\t_0$ for the stationary process. In this paper we solve this problem (see Section \ref{subsec:heuristics} for an account of our most prominent innovations) for the first time and we establish the sharp scaling for FA-$2$f models in any dimension (Theorem \ref{th:FA2f}). In doing so, we also settle various unresolved controversies in the physics literature (see Section \ref{sec:phys} for a detailed account).

Our novel approach not only leads to deeper results, but also extends in breadth. Indeed, it opens the way for accomplishing the final step \cite{Hartarsky20II} to complete the program of \cite{Martinelli19} for establishing KCM universality.

\subsection{Bootstrap percolation background}
\label{sec:BP}
Let us start by recalling some background on  $j$-neighbour bootstrap percolation. Let $\O=\{0,1\}^{\bbZ^d}$ and call a site $x\in\bbZ^d$ \emph{infected} (or \emph{empty}) for $\o\in \O$ if $\o_x=0$ and \emph{healthy} (or \emph{filled}) otherwise. For fixed $0<q<1$, we denote by $\m_q$ the product Bernoulli probability measure with parameter $q$ under which each site is infected with probability $q$. When confusion does not arise, we write $\mu=\mu_q$. 
Given two integers $1\le j\le d$ the $j$-neighbour BP model ($j$-BP for short) on the $d$-dimensional lattice $\bbZ^d$  is the monotone cellular automaton on $\O$ evolving as follows. Let $A_0\subset\bbZ^d$ be the set of \emph{initially infected sites} distributed according to $\m$. Then for any integer \emph{time} $t\ge 0$ we recursively define
\[
A_{t+1}=A_t\cup\big\{x\in\bbZ^d,|N_x\cap A_t|\ge j\big\},
\]
where $N_x$ denotes the set of neighbours of $x$ in the usual graph structure of $\bbZ^d$. In other words, a site becomes infected \emph{forever} as soon as its constraint becomes satisfied, namely as soon as it has at least $j$ already infected neighbours. 
\begin{rem}
The $j$-BP is clearly monotone in the initial set of infection \emph{i.e.}\ $A_t\subset A'_t$ for all $t\ge 1$ if $A_0\subset A'_0$. Such a monotonicity will, however, be missing in the KCM models analysed in this work. 
\end{rem}
A key quantity  for bootstrap percolation is the \emph{infection time of the origin} defined as $\tbp=\inf\{t\ge0,0\in A_t\}$. For $j$=1, trivially,  $\tbp$ scales as the distance to the origin of the nearest infected site and thus behaves w.h.p.\ as $q^{-1/d}$. For $j>1$, the typical value of $\tbp$ w.r.t.\ $\mu_q$ has been investigated in a series of works, starting with the seminal paper of Aizenman and Lebowitz \cite{Aizenman88} and Holroyd's breakthrough \cite{Holroyd03} determining a sharp threshold for $d=j=2$. We refer to \cite{Morris17a} for an account of the field and only recall the more recent results that include second order corrections to the sharp threshold. Here and throughout the paper, when using asymptotic notation we refer to $q\to 0$.\footnote{If $f$ and $g$ are real-valued functions of $q$ with $g$ positive, we  write $f = O(g)$ if there exists a (deterministic absolute) constant $C > 0$ such that $|f(q)|\le C g(q)$ for every sufficiently small $q > 0$. We also write $f = \O(g)$ if $f$ is positive and $g = O(f)$. We further write $f = \Theta(g)$ if both $f = O(g)$ and $f = \O(g)$. Finally, we write $f = o(g)$ if for all $c>0$ for sufficiently small $q>0$ we have $|f(q)|\le c g(q)$.
}
For 2-BP in $d=2$, w.h.p.\ it holds
\cites{Hartarsky19,Gravner08} that
\begin{equation}\label{eq:2nBP}
\tbp=\exp\Big(\frac{\pi^2}{18q}\big(1-\sqrt{q}\cdot\Theta(1)\big)\Big).
\end{equation}
For $j$-BP for all $d\geq j\ge 2$, w.h.p.\ it holds
\cites{Uzzell19,Balogh12} 
\begin{align}
\label{eq:jnbp:lb}\tbp&{}\ge\exp^{j-1}\Big(\frac{\l(d,j)}{q^{1/(d-j+1)}}\big(1-o(1)\big)\Big),\\
\label{eq:jnbp:ub}
\tbp&{}\le\exp^{j-1}\Big(\frac{\l(d,j)}{q^{1/(d-j+1)}}\big(1-\O\big(q^{1/(2(d-j+1))}\big)\big)\Big),
\end{align}
where $\exp^k$ denotes the exponential iterated $k$ times and
$\l(d,j)$ are the positive constants defined explicitly in
\cite{Balogh09a}*{(1)-(3)}. We recall that $\l(2,2)=\pi^2/18$
\cite{Holroyd03}*{Proposition 5} and we refer the interested reader to \cite{Balogh09a}*{Table 1 and Proposition 4} for other values of $d,j$.

We are now ready to introduce the Fredrickson-Andersen model, a natural stochastic counterpart of $j$-BP and the main focus of this work. 

\subsection{The Fredrickson--Andersen model and main result}
\label{subsec:models}
For integers $1\le j\le d$ the \emph{Fredrickson--Andersen $j$-spin facilitated model} (FA-$j$f) is the interacting particle system on $\O=\{0,1\}^{\bbZ^d}$ constructed as follows. Each site is endowed with an independent Poisson clock with rate $1$. At each clock ring the state of the site is updated to an independent Bernoulli random variable with parameter $1-q$ subject to the crucial constraint that if the site has fewer than $j$ infected (nearest) neighbours currently, then the update is rejected. We refer to updates occurring at sites with at least $j$ infected neighbours at the time of the update as \emph{legal}.
\begin{rem}
Contrary to the $j$-BP model, the FA-$j$f process is clearly non-monotone because of the possible recovery of infected sites with at least $j$ infected neighbours. This feature is one of the major obstacles in the analysis of the process.
\end{rem} It is standard to show (see \cite{Liggett05}) that the FA-$j$f process is well defined and it is reversible w.r.t.\ $\m_q$. When the
initial distribution at time $t=0$ is a measure $\nu$, the law and expectation
of the process on the Skorokhod space $D([0,\infty) , \O)$ will be
denoted by $\bbP_{\nu}$ and $\bbE_{\nu}$ respectively. As for $j$-BP let 
\[
\t_0=\inf\{t\ge 0,\o_0(t)=0\}
\]
be the first time the origin becomes infected. Our main goal is to quantify precisely $\bbE_{\m_q}[\t_0]$, the average of $\t_0$ w.r.t.\ the stationary process as $q\to 0$. 
In order to keep the setting simple and the results more transparent, we will focus on the FA-2f model. Other models, including FA-$j$f for all values of $3\le j\le d$, are discussed in Section \ref{subsec:further:results}. Recall the constants $\l(d,2)$ from \eqref{eq:jnbp:lb}, \eqref{eq:jnbp:ub}, so that $\l(2,2)=\pi^2/18$.
\begin{thm}
\label{th:FA2f}
As $q\to 0$ the stationary FA-$2$f model on $\bbZ^d$  satisfies: 
\begin{align}
\label{eq:FA2f:lower}
\bbE_{\m_q}[\tau_0]&\ge{}\exp\Big( \frac{\pi^2}{9q}\big(1-\sqrt{q}\cdot O(1)\big)\Big),\\
\label{eq:FA2f:upper}\bbE_{\m_q}[\tau_0]&\leq{}\exp\Big( \frac{\pi^2}{9q}\big(1+\sqrt{q}\cdot\log^{O(1)}(1/q)\big)\Big),\\
\intertext{if $d=2$, and}
\label{eq:lbd}
\bbE_{\m_q}[\tau_0]&\geq\exp\Big( \frac{d\cdot\l(d,2)}{ q^{1/(d-1)}}(1-o(1))\Big),\\
\label{eq:ubd}
\bbE_{\m_q}[\tau_0]&\leq
\exp\Big( \frac{d\cdot\l(d,2)}{q^{1/(d-1)}}\big(1+q^{1/(2(d-1))}(\log(1/q))^{O(1)}\big)\Big),
\end{align}
if $d\ge 3$. Moreover, \eqref{eq:FA2f:lower}-\eqref{eq:ubd} also hold for $\t_0$ w.h.p.
\end{thm}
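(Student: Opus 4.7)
My overall strategy splits naturally along the lower-bound/upper-bound divide, and in each case reduces the FA-$2$f problem to an analysis of $2$-neighbour bootstrap percolation in a suitably modified random environment. The w.h.p.\ versions of \eqref{eq:FA2f:lower} and \eqref{eq:lbd} are obtained directly and imply the corresponding expectation bounds; conversely, the expectation bounds \eqref{eq:FA2f:upper} and \eqref{eq:ubd} yield the w.h.p.\ upper bounds by Markov's inequality, absorbing the multiplicative constant into the $o(1)$ correction in the exponent.

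\emph{Lower bound.} The starting point is the standard graphical (Harris) construction of FA-$2$f: for each site $x$, attach an independent rate-$q$ ``potential infection'' clock, and let $R_t$ be the set of sites that have had at least one such ring in $[0,t]$, so that $R_t$ is a Bernoulli$(1-e^{-qt})$ field. A straightforward causality induction shows that the set of sites ever infected during $[0,t]$ is contained in the $2$-BP closure of $A_0\cup R_t$. Combined with the sharp 2-BP lower bound \eqref{eq:jnbp:lb}, this already forces $\t_0\gtrsim\exp(\l(d,2)/q^{1/(d-1)})$, which is off by a factor $d$ in the exponent. The sharp constant $d\cdot\l(d,2)$ is recovered by a refinement exploiting the \emph{causal temporal} structure of the infection cascade: any sequence of legal updates that eventually infects the origin must first create and then \emph{transport} a critical droplet across $\bbZ^d$, and I would show that the droplet's displacement along each of the $d$ coordinate directions contributes an essentially independent factor $\exp(\l(d,2)/q^{1/(d-1)})$ to the total barrier, the point being that the creation/annihilation events required to advance the droplet along distinct axes are spatio-temporally disjoint. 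A localised Aizenman--Lebowitz argument in a box of polynomial-in-time radius would then deliver \eqref{eq:FA2f:lower} and \eqref{eq:lbd}.

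\emph{Upper bound.} By reversibility of FA-$2$f on a torus $\L$ of side length $\exp(O(1)/q^{1/(d-1)})$, a standard computation gives $\bbE_{\m_q}[\t_0]\le\trel(\L)/\m_q(\o_0=0)=\trel(\L)/q$, reducing the problem to bounding $\trel(\L)$. I would attack this via the variational characterisation of the spectral gap, constructing a test function (equivalently a family of canonical paths) that realises the optimal relaxation mechanism: nucleate a critical droplet somewhere in $\L$, then transport it to the origin via a sequence of legal FA-$2$f moves that alternately extend the droplet at its leading face and erase it at its trailing face. The $\m_q$-cost of nucleating a droplet is $\exp(-\l(d,2)/q^{1/(d-1)})$, and each of the $d$ orientations for the droplet's motion contributes an additional factor of the same order, giving a total cost of $\exp(-d\cdot\l(d,2)/q^{1/(d-1)})$. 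The full bookkeeping would be organised as a multi-scale bisection in the spirit of the auxiliary-chain framework of \cite{Martinelli19}, successively reducing the Dirichlet form of FA-$2$f to that of a much simpler effective droplet-motion chain whose own relaxation time only contributes to the subleading correction $q^{1/(2(d-1))}(\log(1/q))^{O(1)}$.

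The \emph{main obstacle} is saturating the sharp constant $d\cdot\l(d,2)$ in both directions simultaneously: previous comparison-based arguments yielded the correct order of magnitude but systematically lost a factor $d$. On the lower-bound side, the delicate point is tracking the geometry of the cascade closely enough to rule out ``shortcuts'' in which healings followed by re-infections effectively recycle a single droplet along different coordinate axes; on the upper-bound side, the canonical path must be designed so carefully that \emph{every} FA-$2$f move along it occurs with the asymptotically correct $\m_q$-probability, with essentially no wasteful intermediate configurations. Both difficulties reflect the need for a refined stochastic model of the motion of a single critical droplet inside FA-$2$f, which is the principal novel ingredient of the proof and the reason why the result has eluded a sharp analysis prior to this work.
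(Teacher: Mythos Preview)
Your proposal contains genuine gaps on both sides, and in both cases the root issue is a mistaken picture of where the factor $d$ in the exponent comes from.

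\textbf{Lower bound.} The factor $d$ does \emph{not} arise from $d$ ``independent'' transport steps along coordinate axes; it is already present in the bootstrap percolation estimate for a single critical rectangle. Concretely, for a box $R$ of side $\Theta(1/q^{1/(d-1)})$ one has $\mu_q([\eta]_R=R)\le\exp(-d\lambda(d,2)/q^{1/(d-1)}(1-o(1)))$ (this is Theorem~17 of \cite{Balogh12}; in $d=2$ it is Holroyd's $\pi^2/(9q)=2\lambda(2,2)/q$). The BP infection time $\tau_0^{\mathrm{BP}}$ is the $(1/d)$-th power of the inverse droplet density because the nearest droplet is at distance $\rho^{-1/d}$; for FA-$2$f the relevant quantity is the inverse density itself. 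The paper's lower bound exploits this directly: in a box $V$ of side $\Theta(1/q^{1/(d-1)})$, the origin cannot become infected until the first moment when an internally spanned sub-rectangle connects $\partial V$ to $0$. By stationarity, at each clock ring the configuration is $\mu_q$-distributed, so the waiting time is at least $\Omega(1)/(|V|^2\rho)$ with $\rho$ the probability of such a rectangle. No causal cascade analysis is needed, and your proposed ``spatio-temporally disjoint creation/annihilation events along distinct axes'' mechanism is neither needed nor, as stated, correct.

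\textbf{Upper bound.} Your canonical-path sketch inherits the same wrong intuition (``each of the $d$ orientations contributes an additional factor''), and more importantly it defaults to the auxiliary-chain framework of \cite{Martinelli19}, which the paper explicitly identifies as insufficient for the sharp constant. Two innovations are essential and absent from your plan. First, the droplet must be defined \emph{flexibly}: a multi-scale super-good event in which, at each scale, the position of the lower-scale core is free to vary. A rigid (fully infected) droplet cannot move without creating a second droplet nearby, which costs an extra $\rho_{\mathrm D}^{-1}$ and destroys the sharp constant. Second, the effective droplet dynamics is modelled not by generalised FA-$1$f but by $g$-CBSEP (coalescing/branching simple exclusion), whose relaxation time is $\rho_{\mathrm D}^{-1}$ up to logarithms rather than $\rho_{\mathrm D}^{-2}$ or worse. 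The multi-scale bisection you mention is used, but only \emph{inside} a droplet to show that its internal relaxation time is $\exp(O(\log^3(1/q))/q^{1/(2(d-1))})$, i.e.\ subleading; it does not by itself produce the global bound.
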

In particular, recalling \eqref{eq:jnbp:lb}, \eqref{eq:jnbp:ub}, we have the following.
\begin{cor}
W.h.p.\ $\t_0=(\tbp)^{d+o(1)}$.
\end{cor}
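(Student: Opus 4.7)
The plan is to reduce the corollary to a direct asymptotic comparison of the two leading exponents, using the theorem's bounds on $\t_0$ for the stationary FA-2f process and the quoted high-probability bounds on $\tbp$ for 2-neighbour bootstrap percolation. The interpretation of the statement $\t_0=(\tbp)^{d+o(1)}$ is that $\log\t_0/\log\tbp\to d$ in probability as $q\to 0$, so the argument will be to sandwich $\log\t_0$ and $\log\tbp$ between matching leading-order expressions and then divide.

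First I would separate the cases $d=2$ and $d\ge 3$. For $d=2$, Theorem \ref{th:FA2f} gives w.h.p.
\[
\log\t_0=\frac{\pi^2}{9q}\bigl(1+o(1)\bigr),
\]
while \eqref{eq:2nBP} gives w.h.p.
\[
\log\tbp=\frac{\pi^2}{18q}\bigl(1+o(1)\bigr).
\]
Since $\pi^2/9=2\cdot\pi^2/18$ and $\l(2,2)=\pi^2/18$, taking the ratio yields $\log\t_0/\log\tbp=2+o(1)=d+o(1)$ w.h.p., which is the claim. For $d\ge 3$, I would combine \eqref{eq:lbd}--\eqref{eq:ubd}, which jointly give w.h.p.
\[
\log\t_0=\frac{d\cdot\l(d,2)}{q^{1/(d-1)}}\bigl(1+o(1)\bigr),
\]
with \eqref{eq:jnbp:lb}--\eqref{eq:jnbp:ub} specialised to $j=2$ (so $\exp^{j-1}=\exp$ and $1/(d-j+1)=1/(d-1)$), yielding w.h.p.
\[
\log\tbp=\frac{\l(d,2)}{q^{1/(d-1)}}\bigl(1+o(1)\bigr).
\]
Their ratio is $d+o(1)$, again as required.

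To pass from these two probabilistic statements to a single w.h.p.\ conclusion, I would intersect the events on which each bound holds; since each has probability $1-o(1)$, so does the intersection. On this intersection $\log\t_0=(d+o(1))\log\tbp$, hence $\t_0=(\tbp)^{d+o(1)}$ w.h.p. There is no substantial obstacle here: the only mild point to check is that the $o(1)$ error in $\log\t_0$, once divided by the leading $\l(d,2)/q^{1/(d-1)}$ in $\log\tbp$ (which diverges), still yields an $o(1)$ multiplicative error on the exponent ratio, which is immediate from the explicit error bounds $\sqrt{q}\cdot O(1)$, $\sqrt{q}\cdot\log^{O(1)}(1/q)$, and $q^{1/(2(d-1))}\log^{O(1)}(1/q)$ provided by the theorem and by \eqref{eq:jnbp:ub}.
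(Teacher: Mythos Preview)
Your proposal is correct and follows exactly the approach implicit in the paper, which states the corollary without proof as an immediate consequence of Theorem~\ref{th:FA2f} together with the bootstrap percolation bounds \eqref{eq:2nBP}, \eqref{eq:jnbp:lb}, \eqref{eq:jnbp:ub}. Your explicit separation into the cases $d=2$ and $d\ge 3$ and the ratio computation are precisely what the paper expects the reader to fill in.
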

The above are the first results that establish the sharp asymptotics of $\log\bbE_{\m_q}[\t_0]$ within the whole class of ``critical'' KCM. 

\begin{rem}
We will not provide an explicit proof of the case $d\ge 3$ as it does not require any additional effort with respect to the case $d=2$.
The only significant difference is that the lower bound from \eqref{eq:2nBP} is not available in higher dimensions, leading to a corresponding weakening of the lower bound \eqref{eq:lbd} as compared to \eqref{eq:FA2f:lower}.
\end{rem}
\begin{rem} Despite the resemblance, our results are by no means a corollary of their 2-BP counterpart \eqref{eq:2nBP}. 
While the lower bounds \eqref{eq:FA2f:lower} and \eqref{eq:lbd} do indeed follow rather easily from \eqref{eq:2nBP} and \eqref{eq:jnbp:lb}
together with an improvement of the ``automatic'' lower bound from \cite{Cancrini08}*{Theorem 6.9}, the proof of \eqref{eq:FA2f:upper} and \eqref{eq:ubd} is much more involved. In particular, it  requires guessing an efficient infection/healing mechanism to infect the origin, which has no counterpart in the monotone $j$-BP dynamics (see Section \ref{subsec:heuristics}).
\end{rem}
\subsection{Extensions}
\label{subsec:further:results}

\subsubsection{FA-$j$f with $j\neq 2$}
For the sake of completeness, let us briefly discuss the FA-$j$f model with other values of $j$. The case $j=1$ is the simplest to analyse and behaves very differently: relaxation is dominated by the motion of single infected sites and time scales diverge as $1/q^{\Theta(1)}$ (see \cites{Cancrini08, Shapira20} for the values of the exponent). For $d\ge j\ge 4$ we believe that minor modifications of the treatment of \cite{Martinelli19} along the lines provided by \cite{Balogh09a} should be sufficient to prove that $\bbE_{\m_q}[\t_0]$ scales as $\tbp$ (see \eqref{eq:jnbp:lb}, \eqref{eq:jnbp:ub}). The only remaining case, $d\ge j=3$, should require some more work, still following the approach of \cite{Martinelli19}. Let us emphasise that it should be possible to treat all $d\ge j\ge 3$, using the techniques of the present paper. However, the much faster divergence of the scaling involved should allow the less refined technique of \cite{Martinelli19} to work, as there is a much larger margin for error, making those results easier. We leave the above considerations to future work.

\subsubsection{More general update rules: $\cU$-KCM} 
The full power of the method developed in the present work is required to treat two-dimensional $\cU$-KCM,  a very general class of interacting particle systems with kinetic constraints on $\bbZ^2$. These models and their bootstrap percolation counterpart, $\cU$-BP, are defined similarly to  FA-$j$f and $j$-BP but with arbitrary local monotone constraints (or update rules) $\cU$ \cites{Cancrini08,Martinelli19}. There exist several very symmetric constraints, including the so-called modified 2-BP, requiring two \emph{non-opposite} neighbours to be infected, for which the exact asymptotics of $\log\tbp$, and sometimes even the higher order corrections, are known \cites{Bringmann12}. Our methods should adapt to this setting to yield equally sharp results for $\bbE_{\m_q}[\t_0]$ of the corresponding $\cU$-KCM. In this general setting the outcome would again be of the form $\bbE_{\mu_q}[\t_0]\simeq (\tbp)^{2}$ as for FA-$2$f.

We warn the reader that the exponent $2$ in two dimensions relating $\bbE_{\mu_q}[\t_0]$ to $\tbp$ is not general \cites{Hartarsky20,Hartarsky22univlower} and only applies to `isotropic' models \cite{Hartarsky20II}. Nevertheless, developing the approach of the present work further, in \cite{Hartarsky20II} $\log \bbE_{\m_q}[\t_0]$ is determined up to a constant factor for all so-called ``critical'' KCM in two dimensions, matching the lower bounds established in \cite{Hartarsky22univlower} and establishing a richer KCM analogue of the BP universality result of \cite{Bollobas14}. 

\subsection{Settling a controversy in the physics literature}\label{sec:phys}
Soon after the FA-$j$f models were introduced, some  conjectures in the physics literature predicted the divergence of $\bbE_{\m_q}[\t_0]$ at a \emph{positive} critical density $q_c$ (\cites{Fredrickson84,Fredrickson85,Graham97}). These conjectures were subsequently ruled out in \cite{Cancrini08}, the first contribution analysing rigorously FA-$j$f. After \cite{Cancrini08} and prior to the present work, the  best known bounds on the infection time 
were
\begin{align}
\exp\Big(\frac{\O(1)}{q^{1/(d-1)}}\Big)\le{} &\bbE_{\m_q}[\t_0]\le\exp\Big(\frac{\log^{O(1)}(1/q)}{q^{1/(d-1)}}\Big),\quad j=2,\label{eq:FA2f:old}\\
\exp^{j-1}\Big(\frac{\l(d,j)-o(1)}{q^{1/(d-j+1)}}\Big)\le{} &\bbE_{\m_q}[\t_0]\le\exp^{j-1}\Big(\frac{O(1)}{q^{1/(d-j+1)}}\Big), \quad j\ge 3.\nonumber
\end{align}
The lower bounds follow from the general lower bound  \cite{Martinelli19}*{Lemma 4.3} $\mathbb E_{\m_q}[\t_0]=\Omega(\text{median of $\tbp$})$ together with the $j$-nBP lower bounds (see Section \ref{sec:BP}) while the upper bounds were recently obtained by the second and third author in \cite{Martinelli19}. As such, the above results do not settle a controversy between several conjectures that were put forward in the physics literature. 

The first quantitative prediction for the scaling of $E_{\m_q}[\t_0]$ appeared in \cite{Nakanishi86} where, based on numerical simulations, a faster than exponential divergence in $1/q$ was conjectured for FA-$2$f in $d=2$. For the latter, the first to claim an exponential scaling $\exp(\Theta(1)/q)$ was Reiter \cite{Reiter91}. He argued that the infection process of the origin is dominated by the motion of \emph{macro-defects}, \emph{i.e.}\ rare regions having probability
$\exp(-\Theta(1)/q)$ and size poly($1/q$) that move at an exponentially small rate $\exp(-\Theta(1)/q)$. Later Biroli, Fisher and the last author \cite{Toninelli05} considerably refined the above picture. They argued that macro-defects should coincide with the critical droplets of $2$-BP having probability $\exp(-\pi^2/(9 q))$ and that  
the time scale of the relaxation process inside a macro-defect should be $\exp(c/\sqrt q)$, \emph{i.e.}\ sub-dominant with respect to the inverse of their density, in sharp contrast with the prediction of \cite{Reiter91}. Based on this and on the idea  that macro-defects move diffusively, the relaxation time scale of FA-2f in $d=2$ was conjectured to diverge as $\exp(\pi^2/(9q))$ in $d=2$  \cite{Toninelli05}*{Section 6.3}. 
Yet, a different prediction was later made in \cite{Teomy15} implying a different scaling of the form $\exp(2\pi^2/(9q))$. 
Concerning the behaviour of FA-$2$f in higher dimensions, in  \cite{Toninelli05} 
the relaxation time was predicted to  diverge
as $(\tau_0^{\rm BP})^d$, though the prediction was less precise than for the two dimensional case since  the sharp results for $2$-BP in dimension $d>2$ proved in \cite{Balogh12} were yet to be established.  

Theorem \ref{th:FA2f} settles the above controversy by confirming  the scaling prediction of \cites{Toninelli05,Reiter91} and by disproving those of \cites{Teomy15,Nakanishi86}. Moreover, our result  on the characteristic time scale of the relaxation process \emph{inside} a macro-defect (see Proposition \ref{thm:droplet}) agrees with the prediction of \cite{Toninelli05} and disproves the one of \cite{Reiter91}.

\subsection{Behind Theorem \ref{th:FA2f}: high-level ideas}
\label{subsec:heuristics}
The main intuition behind Theorem \ref{th:FA2f} is that for $q\ll 1$  the
relaxation to equilibrium of the stationary  FA-$2$f process is dominated by
the slow motion of patches of infection dubbed
\emph{mobile droplets} or just \emph{droplets} with very small probability of occurrence, roughly $\exp(-\pi^2/(9q))$. In analogy with the \emph{critical droplets} of bootstrap
percolation (see \cite{Holroyd03}),
mobile droplets have a linear size which is polynomially increasing in $q$
(with some arbitrariness), \emph{i.e.}\
they live on a much smaller scale than the metastable length scale $e^{\Theta(1/q^{{1/(d-1)}})}$ arising in 2-BP
percolation model. One of the main requirements dictating the choice of
the scale of mobile droplets is the requirement that the typical infection
environment around a droplet is w.h.p.\ such that the droplet is able
to move under the FA-$2$f dynamics in any direction. 
Within this scenario the main contribution to the infection time of the
origin for the stationary  FA-$2$f process should come from the time it takes for a droplet to reach the origin. 

In order to translate the above intuition into a mathematically
rigorous proof, one is faced with two different fundamental problems: 
\begin{enumerate}
\item\label{item:definition} a precise, yet workable, definition of mobile droplets;
  \item\label{item:evolution} an efficient model for their ``effective'' random evolution.
\end{enumerate}
In \cites{Martinelli19,Martinelli19a,Hartarsky21a} mobile droplets (dubbed ``super-good'' regions there) have been defined rather
rigidly as fully infected regions of suitable shape and size and their motion has been modelled as a
\emph{generalised FA-$1$f process} on $\bbZ^2$ \cite{Martinelli19a}*{Section 3.1}. In the latter process mobile droplets are freely created or destroyed with the correct
heat-bath equilibrium rates but \emph{only at locations which are
adjacent to an already
existing droplet}. The main outcome of these papers have been (upper) bounds on
the infection time of the origin of the form $\t_0\le
1/\rd^{\log\log(1/\rd
)^{O(1)}}$ w.h.p., where $\rd$ is the density of mobile droplets.

While rather powerful and robust, the solution proposed in \cites{Martinelli19,Martinelli19a,Hartarsky21a} to \ref{item:definition} and \ref{item:evolution} above has no chance to get the \emph{exact}
asymptotics of the infection time because of the rigidity in the
definition of the mobile droplets \emph{and} of the chosen model for their
effective dynamics. Indeed, a mobile droplet should be allowed to
deform itself and move to a nearby position like an
amoeba, by rearranging its infection using the FA-$2$f
moves. This ``amoeba motion'' between nearby locations should occur on a time scale much smaller than the global time scale
necessary to bring a droplet from far away to the origin. In particular, it should not require to first create a new
droplet from the initial one and only later destroy the original one (the main mechanism of the droplet dynamics under the generalised FA-$1$f process).  

With this in mind we offer a new solution to \ref{item:definition} and \ref{item:evolution} above which indeed leads to determining the exact asymptotics of the infection time. Concerning \ref{item:definition}, our treatment in Section
\ref{sec:droplet} consists of two steps. We first propose a
sophisticated multiscale definition of mobile droplets which, in
particular, introduces a  crucial degree of \emph{softness} in their microscopic infection's configuration\footnote{This construction is inspired by one suggested by P.~Balister in 2017, which he conjectured would remove the spurious log-corrections in the bound \eqref{eq:FA2f:old} available at that time.}. The second and much more technically involved step is developing the tools necessary to analyse the FA-2f dynamics inside a mobile droplet. In particular, we then prove two key features (see Propositions
\ref{prop:probadroplet} and 
\ref{thm:droplet} for the case $d=2$):
\begin{enumerate}[label=(1.\alph*),ref=(1.\alph*)]
\item\label{item:rd} to the leading order the probability $\rd$ of mobile droplets satisfies \[\rd
\ge \exp{\Big(- \frac{d\lambda(d,2)}{q^{1/(d-1)}}-\frac{O(\log^2(1/q))}{q^{1/(2d-2)}}\Big)},\]
\item\label{item:gd} the ``amoeba motion'' of mobile droplets between
nearby locations occurs on a time scale $\exp(O(\log(1/q)^3)/{q^{1/(2d-2)}})$ which is sub-leading w.r.t.\ the main time scale
of the problem and only manifests in the second term of \eqref{eq:FA2f:upper}.   \end{enumerate}
Property \ref{item:rd} follows rather easily from well known facts from bootstrap percolation theory, while proving property \ref{item:gd}, one of the most innovative steps of the paper, requires a substantial amount of new ideas.

While properties \ref{item:rd} and \ref{item:gd} above are essential, they are not sufficient on their own for solving problem \ref{item:evolution} above. In Section
\ref{sec:upperbound} we propose to model
(admittedly only at the level of a Poincar\'e inequality, which
however suffices for our purposes) the
random evolution of mobile droplets as a symmetric simple exclusion process with
two additional crucial add-ons: a \emph{coalescence} part (when two mobile droplets meet
they are allowed to merge) and a \emph{branching} part (a single droplet can
create a new one nearby as in the generalised FA-$1$f process). This model, which we call $g$-CBSEP, was studied for the purpose of its present application in the preparatory work \cite{Hartarsky22CBSEP}. Finally, the fact that $g$-CBSEP  relaxes on a time scale proportional to the inverse density of mobile droplets (modulo logarithmic corrections) (see Proposition \ref{prop:g-CBSEP}) yields the scaling of Theorem \ref{th:FA2f}. We emphasise that modelling the large-scale motion of droplets by $g$-CBSEP instead of a generalised FA-1f process is an absolute novelty, also with respect to the physics literature.

\section{Proof of Theorem \ref{th:FA2f}: lower bound}
\label{sec:lower}
In this section we establish the lower bounds \eqref{eq:FA2f:lower} and \eqref{eq:lbd} of Theorem \ref{th:FA2f}. Our proof is actually a procedure to establish a general lower bound for $\bbE_{\mu_q}[\t_0]$ based on bootstrap percolation. This approach improves upon a previous general result \cite{Martinelli19}*{Lemma 4.3} 
which lower bounds $\bbE_{\mu_q}[\t_0]$ with the mean infection time for the corresponding bootstrap percolation model.

Before spelling the details out, let us explain the proof idea. In BP it is known that the origin typically gets infected by a rare ``critical droplet'' of size roughly $1/q$ which can be infected only using internal infections. This droplet, initially at distance $\approx (\text{density of critical droplets})^{-1/d}$ from the origin, grows linearly until hitting the origin. Hence $\tbp\approx (\text{density of critical droplets})^{-1/d}$. On the contrary, the leading behaviour of $\t_0$ is  governed by the inverse probability of a critical droplet, because one needs to wait for a critical droplet to reach the origin under the FA-2f dynamics. Thus, we expect $\t_0\approx(\tbp)^{d}$.

In order to turn this idea into a proof we need a little notation. We call any cuboid of $\bbZ^d$ with faces perpendicular to the lattice directions simply \emph{cuboid}. For a cuboid $R\subset\bbZ^d$ and $\eta\in\O_{\bbZ^d}$ we denote by $[\eta]_R$ the set of sites $x\in R$ which can become infected by legal updates (recall Section \ref{subsec:models}) only using infections in $R$. Equivalently, $[\eta]_R$ can be viewed as the set of sites eventually infected by $2$-BP with initial condition the set $\{x\in R:\h_x=0\}$. Note that $[\h]_R$ is a union of disjoint cuboids. For $x,y\in R$ we write $\{x\overset{R}\longleftrightarrow y\}$ for the event that $[\h]_R$ contains a cuboid containing $x$ and $y$.

The next proposition essentially states that the infection time is at least the inverse density of critical droplets.
\begin{prop}
\label{prop:lower:bound}
Let $V=[-\ell,\ell]^d$ with $\ell=\ell(q)$ be such that
\begin{equation}
\label{eq:lower:bound:easy}
    \mu_q(0\in[\h]_V)=o(1)
\end{equation}
and let
\begin{equation}
    \label{eq:lower:bound:condition}
\rho:= \sup_{x\in V: d(x,V^c)=1}
\mu_q\big(x\overset{V}\longleftrightarrow 0\big).
\end{equation}
Then
\[\bbE_{\mu_q}[\tau_0]\ge \frac{\O(1)}{\rho|V|^2}\]
and $\t_0\ge q/(\r|V|^2)$ w.h.p.
\end{prop}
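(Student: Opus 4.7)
The plan is a first-moment argument bounding the probability that the bootstrap closure $[\eta(s)]_V$ ever contains the origin during $[0,T]$. The structural fact driving the proof is:

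\emph{The set $[\eta(s)]_V$ can acquire a new site only through a $1\to 0$ flip at some $x$ in the internal boundary $\partial^{\mathrm{in}}V:=\{x\in V:d(x,V^c)=1\}$.}

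Indeed, if $y\in V$ is strictly interior to $V$, the FA-2f legality of a flip at $y$ forces two of its (necessarily in-$V$) neighbours to be infected, so $y\in[\eta(s^-)]_V$ already by bootstrap propagation and adding $y$ leaves the closure unchanged; a $0\to 1$ flip anywhere can only shrink the closure. Since $\eta_0(\t_0)=0$ trivially entails $0\in[\eta(\t_0)]_V$, I obtain
\[
\bbP_{\m_q}(\t_0<T)\le\mu_q\bigl(0\in[\eta]_V\bigr)+\sum_{x\in\partial^{\mathrm{in}}V}\bbE_{\m_q}\bigl[N_x(T)\bigr],
\]
where $N_x(T)$ counts the $1\to 0$ flips at $x$ during $[0,T]$ whose post-flip configuration satisfies $x\overset{V}\longleftrightarrow 0$.

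Each $\bbE_{\m_q}[N_x(T)]$ is then controlled by stationarity and the independence of the Poisson clocks from the configuration. The rate of $1\to 0$ flip attempts at $x$ is at most $q$ (clock rate $1$ times probability $q$ of proposing value $0$), and conditional on such an attempt at time $s$ the post-flip restriction of the configuration to $V$ is distributed as $\mu_q$ conditioned on $\eta_x=0$. Using $\{x\overset{V}\longleftrightarrow 0\}\subset\{\eta_x=0\}$, the conditional probability of this event equals $\mu_q(x\overset{V}\longleftrightarrow 0)/q\le\rho/q$, so $\bbE_{\m_q}[N_x(T)]\le T\rho$. Combining with the assumption $\mu_q(0\in[\eta]_V)=o(1)$ and $|\partial^{\mathrm{in}}V|\le|V|$,
\[
\bbP_{\m_q}(\t_0<T)\le o(1)+T|V|\rho.
\]

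Choosing $T=q/(\rho|V|^2)$ makes the right-hand side $o(1)+O(q/|V|)=o(1)$, yielding the w.h.p.\ bound. Choosing instead $T$ as a small constant multiple of $1/(\rho|V|^2)$ gives $\bbP_{\m_q}(\t_0<T)\le1/2+o(1)$, so $\bbE_{\m_q}[\t_0]\ge T\,\bbP_{\m_q}(\t_0\ge T)=\O(1)/(\rho|V|^2)$.

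The only delicate ingredient is the growth observation displayed above: it exploits that the $2$-neighbour constraint at a strictly interior site already forces that site into $[\eta]_V$, and this is precisely what upgrades the present bound over the ``automatic'' one $\bbE_{\m_q}[\t_0]=\O(\text{median of }\tbp)$ of \cite{Martinelli19}*{Lemma 4.3}. The remainder is a routine stationarity-based first-moment computation.
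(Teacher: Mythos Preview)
Your argument is correct and essentially identical to the paper's: both watch the first time $0\in[\eta(t)]_V$, observe that this forces a flip at some inner-boundary site $x$ with post-flip configuration in $\{x\overset{V}\longleftrightarrow 0\}$, and bound the expected number of such flips via stationarity. Your accounting is in fact slightly sharper (you save a factor $|V|$ by counting only rings at boundary sites with proposed value $0$, whereas the paper counts all rings in $V$ and then union-bounds over boundary $x$); the one slip is that the inclusion $\{x\overset{V}\longleftrightarrow 0\}\subset\{\eta_x=0\}$ is false in general, but it is harmless since you only need $\mu_q(x\overset{V}\longleftrightarrow 0\mid\eta_x=0)\le\mu_q(x\overset{V}\longleftrightarrow 0)/q\le\rho/q$, which holds regardless.
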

\begin{proof}
Let $(\h(t))_{t\ge 0}$ denote the stationary KCM on $\bbZ^d,$ let $\cI=\{\o:0\in [\o]_V\}$ and let $\t=\inf\{t\ge 0,\eta(t)\in \cI\}.$ Given a configuration $\eta\in\O$, we say that the origin is infectable inside $V$ iff $\eta\in\cI$. The key observation here is that, by construction, $\tau_0\ge \t$. 

Suppose that at time $t=0$ the origin is not infectable inside $V$ or, equivalently, that $\tau>0$. Then we claim that at time $\tau >0$ there exists a site $x$ at the boundary of $V$ such that $\eta(\tau)\in\{x\overset{V}\longleftrightarrow0\}.$ In other words, at time $\tau$ a suitable very unlikely infection has appeared in $V$. To prove the claim assume $\t>0$ and consider the site $x\in\bbZ^d$ which is updated at time $\t$. Necessarily $x\in V$ and $d(x,V^c)=1$, since otherwise $[\eta^x(\t)]_V=[\eta(\t)]_V$, where $\eta^x(\t)$ is the configuration equal to $\eta(\t)$ except at site $x$. Furthermore, by definition of $\t$, $\eta(\t)\in\cI$ and $\eta^x(\t)\not\in\cI$. But this implies $\eta(\tau)\in\{x\overset{V}\longleftrightarrow0\}$, since otherwise a change of the state at $x$ could not change the infectability of the origin inside $V$.

Recall now the rate one Poisson clocks discussed at the beginning of Section~\ref{subsec:models} and let $N_V(s)$ denote the random number of clock rings (legal or not) at sites in $V$ up to time $s$. Let also $\eta^{(j)}$ denote the configuration right after the $j$-th clock ring. 
By the above we have
\[\bbP_{\m_q}(0<\tau\le s\tc N_V(s))\le\sum_{j=1}^{N_V(s)}\sum_{\substack{x\in V\\d(x,V^c)=1}}\bbP_{\m_q}\big(\eta^{(j)}\in\{x\overset{V}\longleftrightarrow0\}\tc N_V(s)\big).\]
Yet, conditionally on the clock rings, $\eta^{(j)}$ is distributed according to $\m_q$ for the stationary FA-2f process (see e.g.\ \cite{Hartarsky22univlower}*{Claim 3.11}). Hence, recalling \eqref{eq:lower:bound:condition}, we get
\begin{equation}
\label{eq:stationarity}
\bbP_{\m_q}(0<\t\le s\tc N_V(s))\le N_V(s)|V|\rho.
\end{equation}
Using $\bbE(N_V(s))=s|V|$, \eqref{eq:stationarity} gives
\begin{align*}
\bbP_{\m_q}(\t\le s)&{}= \bbP_{\m_q}(\t=0)+\bbE\big( \bbP_{\m_q}(0<\t\le s\tc N_V(s))\big)\\
&{}
\le o(1) + s|V|^2\rho,
\end{align*}
where $\bbE$ denotes the average w.r.t.\ $N_V(s)$ and we used  \eqref{eq:lower:bound:easy} to get $\bbP_{\m_q}(\t=0)=\m_q(\cI)=o(1)$. In conclusion, for all $\e>0$ we have
\[\limsup_{q\to0}\bbP_{\m_q}\big(\t_0\le\varepsilon/(|V|^2\r)\big)\le \limsup_{q\to0}\bbP_{\m_q}\big(\t\le\varepsilon/(|V|^2\r)\big)\le \e,\]
which concludes the proof by Markov's inequality.
\end{proof}

We can now easily deduce the lower bounds of Theorem \ref{th:FA2f} from Proposition \ref{prop:lower:bound} and the following bootstrap percolation results.
\begin{thm}[Eq.~(5.11) of \cite{Aizenman88}]
\label{th:AL}
For any $d\ge2$ there exists $c=c(d)>0$ such that \eqref{eq:lower:bound:easy} holds for any $d\ge 2$ and $\ell\le \exp(cq^{-1/(d-1)})$.
\end{thm}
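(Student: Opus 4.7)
The plan is a classical bootstrap percolation union bound based on the Aizenman--Lebowitz lemma. Fix a critical scale $k^{*}$ of order $q^{-1/(d-1)}$ (with a constant to be chosen). Recalling that $[\eta]_V$ is a disjoint union of internally spanned cuboids, on the event $\{0\in[\eta]_V\}$ the cuboid $R_0$ in this decomposition containing the origin is itself internally spanned.

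The reduction step uses the Aizenman--Lebowitz lemma (Lemma~1 of \cite{Aizenman88}, with its higher-dimensional extension for 2-BP given e.g.\ in \cite{Balogh12}), which asserts that any internally spanned cuboid with longest side at least $k^{*}$ contains an internally spanned sub-cuboid of longest side in $[k^{*}, 2k^{*}]$. Applied to $R_0$, this shows that on $\{0\in[\eta]_V\}$ one of two things occurs: either $s(R_0)<k^{*}$ and $R_0$ is itself an internally spanned cuboid of longest side at most $k^{*}$ containing $0$, or there exists an internally spanned cuboid $R'\subseteq V$ of longest side in $[k^{*}, 2k^{*}]$.

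The key analytic input is the per-cuboid estimate
\[
\mu_q(R\text{ internally spanned}) \le \exp\big(-c_1 q^{-1/(d-1)}\big)
\]
for every cuboid $R$ with longest side at most $2k^{*}$ and a suitable constant $c_1=c_1(d)>0$. For cuboids with $s(R)\asymp k^{*}$ this is the classical exponential decay bound for critical droplets in 2-BP, proved by hierarchy or corner-growth arguments in \cite{Aizenman88,Holroyd03,Balogh12}; for sub-critical sizes the bound is only stronger, since internal spanning requires many initial infections per site. Counting at most $O((k^{*})^{2d})$ cuboids of size $\le k^{*}$ containing $0$ and at most $O(\ell^{d}(k^{*})^{d})$ cuboids of size in $[k^{*}, 2k^{*}]$ inside $V$, a union bound yields
\[
\mu_q(0\in[\eta]_V) \le \big(O((k^{*})^{2d}) + O(\ell^{d}(k^{*})^{d})\big)\exp\big(-c_1 q^{-1/(d-1)}\big),
\]
which is $o(1)$ as soon as $\ell \le \exp(c q^{-1/(d-1)})$ for $c := c_1/(2d)$.

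The only non-routine ingredient is the per-cuboid bound in the third paragraph, and this is precisely the main technical obstacle; however, it is entirely standard in the bootstrap percolation literature and can be invoked as a black box from any of the references cited. Once this input is taken for granted, the remaining argument is a straightforward application of the Aizenman--Lebowitz lemma combined with a counting union bound, with no additional difficulty.
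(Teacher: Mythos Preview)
The paper does not give its own proof of this statement; it is quoted directly from Aizenman--Lebowitz and used as a black box. Your overall strategy---the Aizenman--Lebowitz rectangle lemma followed by a union bound over internally spanned cuboids at a critical scale $k^*\asymp q^{-1/(d-1)}$---is exactly the argument in that reference, so in outline your proposal is correct and matches the cited source.

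There is, however, a genuine slip in your treatment of the case $s(R_0)<k^*$. The uniform bound $\mu_q(R\text{ internally spanned})\le\exp(-c_1q^{-1/(d-1)})$ that you assert for \emph{every} cuboid $R$ with longest side at most $2k^*$ is false: for $R=\{0\}$ the probability is $q$, and for cuboids of bounded longest side it is only polynomially small in $q$. Your remark that ``for sub-critical sizes the bound is only stronger'' has it backwards. The standard repair is a size-dependent estimate: a cuboid of longest side $k$ is internally spanned only if no two adjacent slabs perpendicular to the long direction are both initially empty, which gives $\mu_q(R\text{ internally spanned})\le(Ck^{d-1}q)^{\lfloor k/2\rfloor}$. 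Summing against the $O(k^{2d})$ cuboids of longest side $k$ through the origin, extracting one factor of $q$, and bounding the remaining factors by $(C\epsilon^{d-1})^{\lfloor k/2\rfloor-1}$ for $k\le k^*=\epsilon q^{-1/(d-1)}$ with $\epsilon$ small, the total contribution of this case is $O(q)=o(1)$. So the argument goes through, but not for the reason you stated.
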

\begin{thm}[Theorem~6.1, Lemma~3.9 and Eq.~(4) of \cite{Hartarsky19}]
\label{th:HM}
Let $d=2$ and $\ell= \frac{1}{4q'}\log(1/q')$, where $q'=-\log(1-q)$. Fix a cuboid (\emph{i.e.}\ rectangle) $R\subset \bbZ^2$ with side lengths $a,b$ such that $1\le a\le b\le 2\ell$ and $b\ge \ell$. Then 
\[\m_q([\eta]_R=R)\le\exp\Big(-\frac{\pi^2}{9q}+\frac{O(1)}{\sqrt{q}}\Big).\]
\end{thm}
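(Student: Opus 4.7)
The plan is to follow Holroyd's hierarchy-of-droplets strategy, refined to the sharp level developed in \cite{Hartarsky19}.

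The starting point is the Aizenman--Lebowitz construction inside $R$: on the event $[\eta]_R=R$ there is an increasing sequence of internally filled sub-rectangles $S_0\subsetneq S_1\subsetneq\ldots\subsetneq S_K=R$, with $S_0$ a single initially infected site and each $S_i$ obtained from $S_{i-1}$ by adjoining one row or column. Apart from the initial seed (a factor $q$), adjoining a new strip of length $m_j$ forces at least one infection in that strip, an event of probability at most $1-(1-q)^{m_j}$. Summing over the at most $|R|^{O(1)}$ choices of seed position and order of growth,
\[
\mu_q\bigl([\eta]_R=R\bigr)\le |R|^{O(1)}\,q\,\max_{\text{orderings}}\prod_{j=1}^{K}\bigl(1-(1-q)^{m_j}\bigr).
\]

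The $\pi^2/9$ constant then emerges from the variational analysis of the logarithm of this product. Holroyd's paired-crossing refinement, applied jointly to the horizontal and vertical growth sequences, converts the sum into Riemann sums approximating two copies of his variational integral, which in the case $j=d=2$ evaluates to $\lambda(2,2)=\pi^2/18$. For both $a$ and $b$ in the regime $\ge\ell$ with $q\ell=\log(1/q)/4$, the tails of the two integrals beyond $qa$ and $qb$ are of order $O(\sqrt q)$, since the relevant integrand decays exponentially at $+\infty$ and $\exp(-q\ell)=q^{1/4}$ yields a further $\sqrt q$ factor after one integration. Combining,
\[
-\log\mu_q\bigl([\eta]_R=R\bigr)\ \ge\ \frac{2\lambda(2,2)}{q}-\frac{O(1)}{\sqrt q}\ =\ \frac{\pi^2}{9q}-\frac{O(1)}{\sqrt q}.
\]
For the complementary regime of smaller shorter side $a$, internal filling forces rather rigid structure: in particular for $a=O(1)$ every column must be initially infected, contributing at most $q^b\le q^\ell=\exp(-\log^2(1/q)/(4q))$, far smaller than the target. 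Interpolating between this elementary estimate and the Holroyd bound (the two regimes overlap around $a\sim 1/q$) covers all intermediate $a$; the $|R|^{O(1)}$ factor from hierarchy choices only adds $O(\log(1/q))$ to the exponent, absorbed into $O(1/\sqrt q)$.

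The principal obstacle is sharpening the error from a naive $o(q^{-1})$ to $O(q^{-1/2})$. Two sources demand this tolerance: first, the combinatorial entropy over orderings of the growth hierarchy, which Holroyd's pairing trick must collapse from $\exp(\Theta(q^{-1}\log\log(1/q)))$ down to $\exp(O(q^{-1/2}))$; second, the discrete-to-continuous approximation in converting $\sum_j\log(1-(1-q)^{m_j})$ to the continuous integral, which requires a careful second-order saddle-point-type analysis at the boundary scale $q^{-1/2}$. The precise bookkeeping of both sources constitutes the technical heart of \cite{Hartarsky19} and is the only place where real effort is required.
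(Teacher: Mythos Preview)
The paper does not prove this statement at all: Theorem~\ref{th:HM} is quoted as a black-box input from \cite{Hartarsky19} (the specific references being Theorem~6.1, Lemma~3.9 and Eq.~(4) there), so there is no ``paper's own proof'' to compare your proposal against. Your sketch is a reasonable high-level description of the hierarchy approach underlying \cite{Holroyd03} and its refinement in \cite{Hartarsky19}, but since the present paper treats the result as given, any self-contained proof here would go beyond what is required.

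That said, a few points in your sketch are inaccurate and would need correction if one were to flesh it out. First, the hierarchy does \emph{not} grow by ``adjoining one row or column'' at each step: Holroyd's hierarchies consist of growth steps (where a rectangle grows by a bounded amount) \emph{and} merger steps (where two disjointly internally-filled rectangles merge), and the latter are essential to the structure. Second, your initial bound of ``$|R|^{O(1)}$ choices of seed position and order of growth'' is far too optimistic and in fact contradicts your later (correct) observation that the entropy is a priori of order $\exp(\Theta(q^{-1}\log\log(1/q)))$; controlling this entropy is genuinely the crux of \cite{Hartarsky19}, not a side detail. Third, your treatment of small $a$ is too loose: the bound $q^b$ only applies for $a=1$, and the interpolation ``the two regimes overlap around $a\sim 1/q$'' hides real work, since for $a$ of intermediate size one must still run the full hierarchy argument with the correct variational bound.
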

\begin{thm}[Theorem~17 of \cite{Balogh12}]
\label{th:BBDCM}
Let $d\ge 2$ and $\e>0$. Let $C_0$ be sufficiently large depending on $d$ and $\e$. Then for any $q$ small enough, $C>C_0$ not depending on $q$ and cuboid $R$ with longest edge of length $\ell=C/q^{1/(d-1)}$ we have
\[\m_q([\h]_R=R)\le \exp\Big(-\frac{d\cdot\l(d,2)-\e}{q^{1/(d-1)}}\Big).\]
\end{thm}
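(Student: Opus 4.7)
The statement is the sharp upper bound on the probability that a cuboid of ``critical'' diameter $\ell=C/q^{1/(d-1)}$ is internally $2$-neighbour spanned in $\bbZ^d$, i.e.\ the density estimate for critical droplets underlying the sharp threshold of $d$-dimensional $2$-neighbour bootstrap percolation. My plan is to follow the Holroyd-type scheme extended to all dimensions by Balogh, Bollob\'as, Duminil-Copin and Morris.

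Step 1 (Aizenman--Lebowitz reduction). Call a cuboid $R'$ internally spanned if $[\eta\cap R']_{R'}=R'$. By the standard Aizenman--Lebowitz lemma, if $R$ of longest side $\ell$ is internally spanned, then for every $1\le k\le\ell$ there exists an internally spanned sub-cuboid whose longest side lies in $[k,2k]$. I take $k=\ell_c:=c/q^{1/(d-1)}$ for a small $c=c(d,\e)$ and union-bound over the $\ell^{2d}=q^{-O(1)}$ possible positions and aspect ratios of such a sub-cuboid. This reduces the claim to the same bound, strengthened by $\e/2$, for a single cuboid $Q$ of diameter $\Theta(\ell_c)$, since the polynomial factor is sub-leading with respect to the target exponential in $1/q^{1/(d-1)}$.

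Step 2 (hierarchies and slab growth). For the critical cuboid $Q$, I decompose the event $[\eta]_Q=Q$ along a \emph{hierarchy}: a rooted tree whose leaves are seeds of bounded size, whose internal nodes are nested internally spanned sub-cuboids, and whose edges are ``growth steps'' that extend a sub-cuboid in one of its $2d$ axial directions by a sequence of $(d-1)$-dimensional slabs. For a growth step in direction $i$ from a slab of cross-section of side $k$, the existence of an appropriate pair of nearby infections in the added layer (the update rule for $2$-BP) happens with probability $1-(1-q)^{k^{d-1}}+O((qk^{d-1})^{2})$. Taking logarithms and summing this single-layer cost along one coordinate from $\Theta(1)$ up to $\Theta(\ell_c)$ yields a Riemann sum converging, by the very definition of $\l(d,2)$ recalled in \cite{Balogh09a}*{(1)-(3)}, to $\l(d,2)/q^{1/(d-1)}$. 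Since $Q$ must reach linear size $\Theta(\ell_c)$ in each of the $d$ coordinate directions, the $d$ independent directional contributions add, giving total cost $d\cdot\l(d,2)/q^{1/(d-1)}$.

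Step 3 (variational lower bound and error control). The preceding computation is carried out along one particular hierarchy; the proof is completed by showing that the infimum of the above Riemann-sum functional over all admissible hierarchies is attained by the symmetric ``pyramidal'' growth that enlarges $Q$ uniformly in all $d$ directions, with value $d\cdot\l(d,2)/q^{1/(d-1)}$. The main obstacle is the joint control of two classes of lower-order corrections: (a) the combinatorial entropy of admissible hierarchies, handled by a concavification of the log-probability integrand and the observation that each growth step carries $O(\log(1/q))$ entropy against an exponentially small per-step probability, and (b) the deviation of the slab probabilities from their ideal continuum form at small cross-sections, controlled by truncating the integral below a scale $\log(1/q)/q^{1/(d-1)}$ as in Holroyd's original argument. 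Both corrections are $o(1/q^{1/(d-1)})$ and are absorbed into $\e$ by taking $C_0$ large enough and then $q$ small enough, yielding the stated bound.
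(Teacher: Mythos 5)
The paper itself contains no proof of this statement: Theorem~\ref{th:BBDCM} is quoted directly as \cite{Balogh12}*{Theorem~17} and used as a black box in the derivation of \eqref{eq:lbd} via Proposition~\ref{prop:lower:bound}. Your sketch therefore has to be measured against the original argument of Balogh, Bollob\'as, Duminil-Copin and Morris rather than against anything proved in the present paper.

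Taken on its own terms, Step~1 contains a direction error in the scale choice that would make the argument fail. You invoke the Aizenman--Lebowitz lemma to pass to an internally spanned sub-cuboid of longest side in $[\ell_c,2\ell_c]$ with $\ell_c=c/q^{1/(d-1)}$ for a ``small'' $c$, claiming this reduces the target to the same bound (with $\e$ tightened to $\e/2$) for a sub-cuboid of that size. But the spanning probability of a cuboid of linear size $c/q^{1/(d-1)}$ is of order $\exp\bigl(-F(c)/q^{1/(d-1)}\bigr)$ with $F$ increasing, $F(c)\to 0$ as $c\to 0$, and $F(c)\uparrow d\lambda(d,2)$ only as $c\to\infty$; for a small constant $c$ the claimed bound $\exp\bigl(-(d\lambda(d,2)-\e/2)/q^{1/(d-1)}\bigr)$ is simply false. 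One must instead take $c=c(d,\e)$ \emph{large} (which is precisely why the hypothesis ``$C_0$ sufficiently large depending on $d$ and $\e$, and $C>C_0$'' appears in the statement); only then is the sub-cuboid spanning probability already sub-exponential at the desired rate and the polynomial union bound over positions sub-leading. Beyond this, Steps~2--3 capture the correct high-level shape of the \cite{Balogh12} argument (hierarchies, slab-by-slab Riemann sums, a variational optimisation, entropy control), but elide the genuinely hard parts: the single-slab growth probability is not $1-(1-q)^{k^{d-1}}$ but is governed by Holroyd's function $\beta$ (equivalently $g$, cf.\ Appendix~\ref{app:BP}); the assertion that ``the $d$ independent directional contributions add'' is the conclusion of a delicate variational argument, not a starting point, since growth in different coordinate directions is coupled within a single hierarchy; and the entropy control for hierarchies in $d\ge 3$ is the technical bulk of \cite{Balogh12}. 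As a roadmap your sketch points in the right places, but it is not a proof, and the Step~1 reduction as written is invalid.
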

\begin{proof}[Proof of the lower bounds \eqref{eq:FA2f:lower} and \eqref{eq:lbd} in Theorem \ref{th:FA2f}]
Fix $d$ and $\ell$ as in Theorem \ref{th:HM}. Theorem \ref{th:AL} implies \eqref{eq:lower:bound:easy}. Then Theorem~\ref{th:HM} and a union bound over all possible cuboids $R\subset V=[-\ell,\ell]^2$ containing both $0$ and some $x$ with $d(x,V^c)=1$ give $\rho\le\exp(-\frac{\pi^2}{9q}+\frac{O(1)}{\sqrt{q}})$. Thus, \eqref{eq:FA2f:lower} follows from Proposition \ref{prop:lower:bound}.

Fix $d$ and $\ell$ as in Theorem \ref{th:BBDCM}. Theorem \ref{th:AL} implies \eqref{eq:lower:bound:easy}. The upper bound on $\rho$ leading to \eqref{eq:lbd} follows from Theorem~\ref{th:BBDCM} together with a union bound as above, so we may conclude by Proposition \ref{prop:lower:bound}.
\end{proof}

\section{Constrained Poincar\'e inequalities}
\label{sec:Poinc}
In this section we state and prove various Poincar\'e inequalities for the auxiliary chains that will be instrumental in Section \ref{sec:droplet} (see Lemmas \ref{lem:bisectio} and \ref{lem:lin}). 

\subsection{Notation}
\label{subsec:notation}
Given $\L\subset \bbZ^2$ and $\omega\in \Omega$, we  write $\omega_{\Lambda}\in\Omega_{\Lambda}:=\{0,1\}^\L$ for the restriction of $\omega$ to $\Lambda$ and we denote by $\mu_\L$ the marginal of $\mu$ on $\O_\L$.  The configuration (in $\O$ or $\O_\L$) identically equal to one is denoted by $\mathbf 1$. Given disjoint $\Lambda_1,\L_2\subset \bbZ^2$, $\o^{(1)}\in \O_{\L_1}$ and $\o^{(2)}\in\O_{\L_2}$, we write $\omega^{(1)}\cdot\omega^{(2)}\in \Omega_{\Lambda_1\cup\Lambda_2}$ for the configuration equal to $\omega^{(1)}$ in $\Lambda_1$ and to $\omega^{(2)}$ in $\Lambda_2$. For $f:\Omega\to \mathbb R$ we will denote by $\mu(f)$ its expectation w.r.t.\ $\mu$ and by $\mu_{\L}(f)$ and $\var_\L(f)$ the mean and variance w.r.t.\ $\mu_\L$, given $\o_{\bbZ^2\setminus\L}$.

For sake of completeness, we recall the classic definitions of  Dirichlet form, Poincar\'e inequality, and relaxation time.
Given  a measure $\nu$ and a Markov process with generator $\mathcal L$ reversible w.r.t.\ $\nu$, the corresponding Dirichlet form 
$\cD:\Dom(\cL)\to\mathbb R$ is defined as  
 \begin{equation}\label{dirichlet}
\cD(f):=- \nu(f \cdot \cL f).
\end{equation}
For the FA-$2$f model, 
the definition of Section \ref{subsec:models} yields
the following Dirichlet form \begin{equation}\label{dirichlet:FA}
\cD^{\text{FA-$j$f}}_{\mathbb Z^d}(f)=\sum_{x\in \bbZ^d}\mu\left(c_{x} \mbox{Var}_x(f)\right).
\end{equation}
with $c_x$
the indicator function of the event \emph{``the constraint at $x$ is satisfied''}, namely
for $x\in\bbZ^2$ and $\eta\in\O$ we set 
\begin{equation}
    \label{def:ratesbis}
  c_x(\eta)=
  \begin{cases}
  1 & \text{ if $\sum_{y\sim x}(1-\eta_y)\geq 2$}
  \\  0 &  \text{ otherwise}
  \end{cases}
\end{equation}
where $y\sim x$ if $x,y$ are  nearest neighbours.

We say that a {\emph{Poincar\'e inequality}} with constant $C$ is satisfied by the Dirichlet form if
for any function $f\in \Dom(\cL)$ it holds
\begin{equation}
  \label{poincare}
  \var_{\nu}(f)\leq C \cD(f).
  \end{equation}
  Finally, the \emph{relaxation time} is defined as the best constant in the Poincar\'e inequality, namely
  \begin{equation}
  \label{eq:trel}
\trel:=\sup_{\substack{f\in \Dom(\cL)\\ \var_{\nu}(f)\neq 0}}\frac{\var_{\nu}(f)}{\cD(f)}.
\end{equation}
A finite relaxation time  implies that the reversible measure is mixing for the
semigroup $P_t:=e^{t\cL}$ with exponentially decaying correlations (see \emph{e.g.}\ \cite{Saloff-Coste97}), namely
for all $f\in L^2(\nu)$ it holds
 \begin{equation}\label{expo}\var_{\nu}\left(P_t f\right)={\nu}( f P_t f )-\nu(f)^2\leq \exp(-2 t/\trel)\var_{\nu}(f).\end{equation}

\subsection{FA-1f-type Poincar\'e inequalities}
\label{sec:KCMtools}
Fix  $\Lambda\subset\mathbb Z^2$  a connected set and let $\Omega^+_{\Lambda}=\Omega_\Lambda\setminus\mathbf 1$. Given $x\in \L$ let $N^\L_x$ be the set
of neighbours of $x$ in $\L$ and let $\cN_x^\L$ be the event that
$N_x^\L$ contains at least one infection. For any $z\in \L$ consider the two Dirichlet forms
\begin{align*}
  \cD_\L^{\text{FA-$1$f}}(f)&= \mu_{\Lambda}\Big(\sum_{x\in\Lambda}\1_{\cN_x^\L}\var_x(f)\tc\Omega^+_{\Lambda}\Big),&f:\O^+_\L&{}\to \bbR,\\
  \cD_\L^{\text{FA-$1$f},z}(f)&= \mu_{\Lambda}\Big(\sum_{\substack{x\in\Lambda\\x\neq z}}\1_{\cN_x^\L}\var_x(f)+\var_z(f)\Big),& f:\O_\L&{}\to \bbR.
\end{align*}
\begin{rem}
The alert reader will recognise the above expressions as the Dirichlet forms of the FA-$1$f process on $\O^+_\L$ or on $\O_\L$ with the site $z$ unconstrained.
\end{rem}
Our first tool is a Poincar\'e inequality for these Dirichlet forms.
\begin{prop}
\label{prop:FA1f}\leavevmode 
Let $\Lambda$ be a connected subset of $\mathbb Z^2$ and let $z\in \L$ be an arbitrary site. Then:
\begin{enumerate}
\item\label{item:FA1ferg}
for any $f:\O^+_\L\to \bbR$,
\begin{equation}
  \label{eq:FA1fergodic}
\var_{\Lambda}(f \tc \Omega^+_{\Lambda})
\leq\frac{1}{q^{O(1)}}\cD_\L^{\mathrm{FA-1f}}(f);
\end{equation}
\item\label{item:FA1fboundary} for any $f:\O_\L\to \bbR$,
\begin{equation}
    \label{eq:FA1fboundary}
    \var_{\Lambda}(f)
  \leq\frac{1}{q^{O(1)}}\cD_\L^{\mathrm{FA-1f},z}(f),
\end{equation}
\end{enumerate}
where the constants in the $O(1)$ do not depend on $z$ or $\L$.
\end{prop}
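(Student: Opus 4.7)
Both bounds are refinements of the classical FA-1f Poincaré inequality on bounded-degree graphs \cite{Cancrini08}; the constant hidden in $q^{-O(1)}$ must be uniform in both $|\Lambda|$ and $z$. A naïve leaf-by-leaf induction would fail by accumulating an additive factor per leaf, so I plan to establish \eqref{eq:FA1fboundary} via a bisection argument, then derive \eqref{eq:FA1fergodic} from it by a soft extension of $f$ from $\Omega^+_\Lambda$ to $\Omega_\Lambda$.

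\textbf{Proof of \eqref{eq:FA1fboundary}.} Partition $\Lambda=\Lambda_1\sqcup\Lambda_2$ into two roughly equal connected halves with $z\in\Lambda_1$ and a boundary site $z_2\in\Lambda_2$ adjacent to $\Lambda_1$, and decompose
\[
\var_\Lambda(f)=\mu_{\Lambda_2}\bigl[\var_{\Lambda_1}(f)\bigr]+\var_{\Lambda_2}\bigl(\mu_{\Lambda_1}f\bigr).
\]
The inner term is handled by the inductive hypothesis on $\Lambda_1$ with root $z$, conditional on $\omega_{\Lambda_2}$; conditional Jensen ($\var_u(\mu_{\Lambda_2}f)\le \mu_{\Lambda_2}(\var_u f)$) together with the elementary monotonicity $\1_{\cN_u^{\Lambda_1}}\le \1_{\cN_u^\Lambda}$ (an infected neighbor in the smaller set is still infected in the larger one) then yields $\cD^{\mathrm{FA-1f},z}_{\Lambda_1}(\mu_{\Lambda_2}f)\le \cD^{\mathrm{FA-1f},z}_\Lambda(f)$. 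For the outer term, apply the inductive hypothesis on $\Lambda_2$ with root $z_2$. The resulting unconstrained $z_2$-variance $\mu_\Lambda(\var_{z_2}f)$ is not literally a term of $\cD^{\mathrm{FA-1f},z}_\Lambda(f)$, but can be bounded by $q^{-O(1)}$ times it via a two-step canonical path: telescope $\var_{z_2}(f)=q(1-q)(f|_{\omega_{z_2}=0}-f|_{\omega_{z_2}=1})^2$ through an intermediate configuration in which a chosen neighbor $v$ of $z_2$ in $\Lambda_1$ is infected, and apply Cauchy--Schwarz. The outer telescope terms are legal $v$-flips under the event that $z_2$ is infected, and the middle term is a legal $z_2$-flip under the event that $v$ is infected; both appear in $\cD^{\mathrm{FA-1f},z}_\Lambda(f)$. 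The recursion $K_n\le K_{n/2}+q^{-O(1)}$ closes to a uniform $K_\infty=q^{-O(1)}$.

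\textbf{Proof of \eqref{eq:FA1fergodic}.} Extend $f:\Omega^+_\Lambda\to\bbR$ to $\tilde f:\Omega_\Lambda\to\bbR$ by setting $\tilde f(\mathbf 1):=\mu(f\tc\Omega^+_\Lambda)$; the law of total variance yields $\var_{\mu_\Lambda}(\tilde f)=\mu(\Omega^+_\Lambda)\var_{\mu_\Lambda}(f\tc\Omega^+_\Lambda)$. Apply \eqref{eq:FA1fboundary} to $\tilde f$ with an arbitrary $z\in\Lambda$: each Dirichlet contribution decomposes into $\mu(\Omega^+_\Lambda)$ times the corresponding term of $\cD^{\mathrm{FA-1f}}_\Lambda(f)$, plus a perturbation coming from pairs of configurations that differ from $\mathbf 1$ by a single flip, whose total weight is at most $(1-q)^{|\Lambda|-1}$. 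Since the ratio $(1-q)^{|\Lambda|-1}/\mu(\Omega^+_\Lambda)=(1-q)^{|\Lambda|-1}/(1-(1-q)^{|\Lambda|})$ is $O(1/q)$ uniformly in $|\Lambda|\ge 1$, this perturbation is absorbed into the target $q^{-O(1)}$ factor.

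\textbf{Main obstacle.} The crux is ensuring that the bisection constant in \eqref{eq:FA1fboundary} stays uniform in $|\Lambda|$: the boundary-patching cost must not compound through the recursion. This relies on the bounded degree of $\bbZ^2$ and on the key structural fact that a single boundary site suffices to transmit ``unconstrainedness'' between the two halves, so that the cost per bisection level is additive, not multiplicative.
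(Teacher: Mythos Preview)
Your bisection strategy for \eqref{eq:FA1fboundary} does not close uniformly in $|\Lambda|$. First, even the recursion you claim, $K_n\le K_{n/2}+q^{-O(1)}$, would only give $K_n\le q^{-O(1)}\log n$, not a bound independent of $|\Lambda|$. Second, your argument does not actually produce an additive recursion: the canonical-path cost $q^{-O(1)}$ for converting the unconstrained $\mu_\L(\var_{z_2}f)$ into constrained Dirichlet terms is incurred \emph{after} you apply the inductive bound on $\Lambda_2$, so it is multiplied by $K_{|\Lambda_2|}$. The recursion you really obtain is $K_n\le q^{-O(1)}K_{n/2}$, which diverges as $n\to\infty$. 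There are also secondary issues: an arbitrary connected $\Lambda\subset\bbZ^2$ need not admit a partition into two connected pieces of comparable size (think of a cross with four long arms), and the third step of your canonical path---flipping $v$ back after $z_2$ has been healed---may be illegal since $v$ then has no guaranteed infected neighbour.

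The paper avoids all of this by reversing the logical order. It quotes \eqref{eq:FA1fergodic} directly from \cite{Blondel13}*{Theorem~6.1}, and then deduces \eqref{eq:FA1fboundary} via a short auxiliary-chain argument: consider the chain on $\O_\L$ that resamples $\o_z$ at rate one and, whenever $\o\in\O^+_\L$, resamples the entire configuration from $\mu_\L(\cdot\tc\O^+_\L)$ at rate one. Two copies couple once $z$ is refreshed to $0$ and the subsequent global resample is performed, so the relaxation time is $O(1/q)$; plugging \eqref{eq:FA1fergodic} into the global-resample block then yields \eqref{eq:FA1fboundary} with constants independent of $|\Lambda|$ and $z$.
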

\begin{proof}
Inequality \eqref{eq:FA1fergodic} is proved in
\cite{Blondel13}*{Theorem 6.1}. In order to prove \eqref{eq:FA1fboundary}, consider the auxiliary Dirichlet form 
\[
\m_\L(\var_z(f))+ \mu_{ \L}\big(\1_{\O^+_\L}\var_\L(f\tc \O^+_\L)\big).
\]
The corresponding ergodic, continuous time Markov chain on $\O_\L$, reversible w.r.t.\ $\mu_\L$, updates the state of $z$ at rate 1 and, if $\o\in\O^+_\L$, it updates the entire configuration w.r.t.\ $\pi(\cdot\tc \O^+_\L)$. Observe that two copies of this chain attempting the same updates simultaneously couple as soon as they update the state of $z$ to state 0 and then change to the same configuration in $\O_\L^+$. Thus, by \cite{Levin09}*{Corollary 5.3 and Theorem 12.4} the relaxation time of this chain is $O(1/q)$, as the first step occurs at rate $q$. Indeed, after time $1/q$ there is probability $\O(1)$ that the above sequence of two consecutive updates has been performed.

Hence,
\begin{align*}
\var_\L(f)&\le O(1/q) \Big(\m_\L(\var_z(f))+\mu_{ \L}\big(\1_{\O^+_\L}\var_\L(f\tc \O^+_\L)\big)\Big)\\
&\le \frac{1}{q^{O(1)}}\Big(\m_\L(\var_z(f))+ \mu_\L(\O^+_\L)\cD_\L^{\mathrm{FA-1f}}(f)\Big),
\end{align*}
where the second inequality follows from \eqref{eq:FA1fergodic}.  
We may then conclude by observing that $\m_\L(\var_z(f))+\mu_\L(\O^+_\L)\cD_\L^{\mathrm{FA-1f}}(f)\le 2\cD_\L^{\mathrm{FA-1f},z}(f)$.
\end{proof}
Our second tool is a general constrained Poincar\'e inequality
for two independent random variables.
\begin{prop}[See \cite{Hartarsky21a}*{Lemma 3.10}]
\label{prop:2block}
Let $X_1,X_2$ be two independent random variable taking values in two
finite sets $\bbX_1,\bbX_2$ respectively.  Let also $\cH\subset \bbX_{1}$ with
$\bbP(X_1\in \cH)>0$. Then for any $f:\bbX_1\times \bbX_2\to \bbR$ it holds
\[
\var(f)\le  2\bbP(X_1\in \cH)^{-1}\bbE\big(\var_{1}(f)
+\1_{\{X_1\in \cH \}}
\var_{2}(f)\big). 
\]
with $\var_i(f)=
\var(f(X_1,X_2)\tc X_i)$.
\end{prop}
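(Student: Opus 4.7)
The statement is essentially the classical tensorization of variance for independent random variables, re-expressed in a shape matching the Dirichlet-form bookkeeping used later in the paper. My plan has two short ingredients.

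\emph{Step 1: classical tensorization.} I would first establish the standard bound $\var(f) \le \bbE[\var_1(f)] + \bbE[\var_2(f)]$. By the law of total variance conditioning on $X_1$, $\var(f) = \bbE[\var_1(f)] + \var(g(X_1))$, where $g(x_1) := \bbE_{X_2}[f(x_1, X_2)]$. Writing $\var(g(X_1)) = \tfrac12 \bbE[(g(X_1) - g(X_1'))^2]$ for an independent copy $X_1'$ of $X_1$ and applying Jensen's inequality to the identity $g(X_1) - g(X_1') = \bbE_{X_2}[f(X_1, X_2) - f(X_1', X_2)]$ yields $\var(g(X_1)) \le \bbE[\var_2(f)]$, hence the tensorization.

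\emph{Step 2: insert the indicator via independence.} The indicator $\1_{\{X_1 \in \cH\}}$ enters only through a trivial observation: $\var_2(f) = \var(f(X_1, X_2)\tc X_2)$ is a deterministic function of $X_2$ alone, and $X_1, X_2$ are independent, so $\bbE[\1_{\{X_1 \in \cH\}}\var_2(f)] = \bbP(X_1 \in \cH)\,\bbE[\var_2(f)]$. Substituting into the tensorization bound and using the crude inequality $1 \le 2 \bbP(X_1 \in \cH)^{-1}$ in front of $\bbE[\var_1(f)]$ immediately yields the stated inequality (indeed with the even sharper constant $1$ in place of $2$).

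No genuine obstacle is to be expected: the only non-cosmetic content is the standard tensorization of Step~1. The artificial-looking indicator on the right-hand side is inserted purely to present the bound as a Poincar\'e-type inequality for a Markov chain that freely updates $X_2$ but only updates $X_1$ when the current $X_1$ lies in $\cH$. This is precisely the Dirichlet-form shape needed in the forthcoming applications, where $X_1$ will represent a genuinely constrained component rather than one that factorises through independence.
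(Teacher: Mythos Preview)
Your argument is correct for the inequality exactly as stated: tensorisation gives $\var(f)\le \bbE[\var_1(f)]+\bbE[\var_2(f)]$, and since $\var_2(f)=\var(f\tc X_2)$ depends only on $X_2$ while $\1_{\{X_1\in\cH\}}$ depends only on $X_1$, independence factors the expectation and yields the claim (indeed with constant $1$). The paper itself supplies no proof, only the citation.

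One caveat is worth flagging. The paper's informal remark immediately after the proposition, and its sole application in the proof of Lemma~\ref{lem:lin}, both correspond to the inequality with the indicator attached to $\var_1(f)$ rather than $\var_2(f)$, namely
\[
\var(f)\le 2\bbP(X_1\in\cH)^{-1}\bbE\big(\var_2(f)+\1_{\{X_1\in\cH\}}\var_1(f)\big).
\]
For that version your Step~2 shortcut breaks down: $\var_1(f)=\var(f\tc X_1)$ is a function of $X_1$, so it does not decouple from $\1_{\{X_1\in\cH\}}$ by independence, and a genuine argument (coupling or canonical paths for the two-block chain) is required. Thus, while your proof is valid for the displayed inequality, the latter appears to carry a misprint in the indices, and the form actually used downstream needs more than the tensorisation-plus-factorisation trick.
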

Roughly speaking, this states that the chain that updates $X_1$ at rate 1 and $X_2$ at rate 1 only if $\cH$ occurs, has relaxation time given by the inverse probability of $\cH$.

\subsection{Constrained block chains}
\label{sec:auxiliary}
In this section we define two auxiliary constrained reversible Markov
chains and give an upper bound for the corresponding Poincar\'e constants (Propositions \ref{key:bisection} and \ref{key:bisection2}).

Let $(\O_i,\pi_i)_{i=1}^3$ be finite probability spaces and let
$(\O,\pi)$ denote the associated product space. For $\omega\in\O$ we
write $\o_i\in \O_i$ for its $i\textsuperscript{th}$ coordinate and we assume for simplicity that
$\pi_i(\o_i)>0$ for each $\o_i$. Fix $\mathcal A_3\subset\O_3$ and 
for each $\omega_3\in \mathcal A_3$ consider an event $\mathcal
B_{1,2}^{\omega_3}\subset\O_1\times \O_2$. Analogously, fix $\mathcal A_1\subset\O_1$ and 
for each $\omega_1\in \mathcal A_1$ consider an event $\mathcal
B_{2,3}^{\omega_1}\subset\O_2\times \O_3$. We then set 
  \begin{align*}
 \cH &{}=\big\{\o:{\omega_3\in \mathcal A_3}\text{ and }
             (\omega_1,\omega_2)\in \mathcal
             B_{1,2}^{\omega_3}\big\},\\
    \cK&{}=\big\{\o:{\omega_1\in \mathcal A_1} \text{ and } (\omega_2,\omega_3)\in \mathcal B_{2,3}^{\omega_1}\big\}
  \end{align*}
and let for any $f:\cH\cup \cK\to \bbR$ 
\[\Da^{(1)}(f) =\pi\big(\1_{\cH}\var_\pi(f\tc
   \cH,\o_3) + \1_{\cK}\var_\pi(f\tc \cK,\o_1)\tc \cH\cup \cK \big).\]
\begin{obs}
\label{obs:aux1}
It is easy to check that $\Da^{(1)}(f)$ is the Dirichlet form of the
continuous time Markov chain on $\cH\cup \cK$ in which if $\o\in \cH$ the
pair $(\o_1,\o_2)$ is resampled with rate one from
$\pi_1\otimes\p_2(\cdot \tc \cB_{1,2}^{\o_3})$ and if $\o\in \cK$ the
pair $(\o_2,\o_3)$ is resampled with rate one from
$\pi_2\otimes\p_3(\cdot \tc \cB_{2,3}^{\o_1})$. This chain is
reversible w.r.t.\ $\pi(\cdot\tc \cH\cup \cK)$ and its constraints, contrary to
what happens for general KCM, depend on the to-be-updated variables. 
\end{obs}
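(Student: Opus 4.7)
The plan is to exhibit the generator $\cL$ of the chain described in the observation, check reversibility w.r.t.\ $\pi(\cdot\tc\cH\cup\cK)$ via detailed balance, and then compute $-\pi(f\cdot\cL f\tc\cH\cup\cK)$ directly and verify that the resulting expression matches $\Da^{(1)}(f)$. Explicitly, for $\o\in\cH\cup\cK$ and $f:\cH\cup\cK\to\bbR$ the generator can be written as
\begin{equation*}
\cL f(\o)=\1_\cH(\o)\bigl[\pi(f\tc\cH,\o_3)-f(\o)\bigr]+\1_\cK(\o)\bigl[\pi(f\tc\cK,\o_1)-f(\o)\bigr],
\end{equation*}
where, by independence of the three blocks, $\pi(\cdot\tc\cH,\o_3)$ coincides with $\pi_1\otimes\pi_2(\cdot\tc\cB_{1,2}^{\o_3})$ for $\o_3\in\cA_3$, and similarly for the $\cK$-term. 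Each update preserves $\cH\cup\cK$ because it leaves the conditioning coordinate fixed and lands inside the prescribed conditioning event; on $\cH\cap\cK$ both updates simply fire independently at rate one.

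For reversibility, observe that a non-trivial $\cH$-transition from $\o$ to $\o'$ requires $\o_3=\o'_3\in\cA_3$ together with $(\o_1,\o_2),(\o'_1,\o'_2)\in\cB_{1,2}^{\o_3}$, and has rate $\pi_1(\o'_1)\pi_2(\o'_2)/(\pi_1\otimes\pi_2)(\cB_{1,2}^{\o_3})$. Multiplying by $\pi(\o)$ and using the product form $\pi=\pi_1\otimes\pi_2\otimes\pi_3$, one obtains an expression symmetric under $\o\leftrightarrow\o'$, which is exactly the detailed balance equation for $\pi(\cdot\tc\cH\cup\cK)$; the $\cK$-case is identical.

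Finally, I would invoke the elementary identity $\nu\bigl(f\,(f-\nu(f))\bigr)=\var_\nu(f)$ block-by-block: conditioning the first summand of $-\pi(f\cdot\cL f\tc\cH\cup\cK)$ on $\o_3$ (with $\o_3\in\cA_3$) and applying the identity with $\nu=\pi(\cdot\tc\cH,\o_3)$ yields the contribution $\1_\cH\var_\pi(f\tc\cH,\o_3)$, and the symmetric computation handles the second summand. Reassembling reproduces $\Da^{(1)}(f)$ verbatim. The verification is essentially bookkeeping; the only subtle point is correctly identifying the conditional measures, in particular that $\pi(\cdot\tc\cH,\o_3)$ is indeed $\pi_1\otimes\pi_2(\cdot\tc\cB_{1,2}^{\o_3})$ when $\o_3\in\cA_3$, so no genuine obstacle is anticipated.
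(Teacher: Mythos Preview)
Your verification is correct and is precisely the routine check the paper has in mind; the paper itself gives no proof of this observation, simply flagging it as ``easy to check'' and moving on. Your identification of the generator, the detailed-balance argument, and the block-by-block variance computation are all standard and accurate.
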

\begin{prop}\label{key:bisection}
There exists a universal constant $c$ such that the following holds. Suppose that there exist two events
$\cF_{1,2}$, $\mathcal F_{2,3}$
such that 
\begin{align}
\big\{\omega:\omega_3\in \cA_3\text{ and } (\omega_1,\omega_2)\in  \cF_{1,2}\big\}&{}\subset \cH\cap\cK,\label{H1}\\
\big\{ \omega: \omega_1\in \cA_1  \text{ and }
  (\omega_2,\omega_3)\in\cF_{2,3} \big\}&{}\subset \cH\cap\cK\label{H2}
\end{align}
and let
\[
  \Ta^{(1)}=  \max_{\omega_3\in \cA_3, }\Big(\frac{\pi(
    \cB_{1,2}^{\o_3})}{\pi(\cF_{1,2})}\Big)^{2}\max_{\o_1\in \cA_1}\frac{\pi( \cB_{2,3}^{\o_1})}{\pi(\cF_{2,3})}.
\]
Then, for all $f:\cH\cup \cK\to \bbR$,
\[
  \var_\pi(f\tc \cH\cup \cK)\le c
  \Ta^{(1)}\Da^{(1)}(f).
\]
\end{prop}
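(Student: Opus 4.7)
The plan is a nested bisection argument built on Proposition~\ref{prop:2block} and the overlap structure encoded by the bridge events $\cF_{1,2}, \cF_{2,3}$. First I would observe that the chain associated to $\Da^{(1)}$ foliates $\cH$ into slices indexed by $\omega_3\in\cA_3$, on each of which a single $\cH$-move fully equilibrates the pair $(\omega_1,\omega_2)$; likewise $\cK$ is foliated by slices indexed by $\omega_1\in\cA_1$ that are fully equilibrated by a single $\cK$-move. Hypotheses \eqref{H1}-\eqref{H2} guarantee that every such slice meets $\cH\cap\cK$ with $\pi$-mass at least $\pi(\cF_{1,2})$ (resp.\ $\pi(\cF_{2,3})$), so the ratios $\pi(\cB_{1,2}^{\omega_3})/\pi(\cF_{1,2})$ and $\pi(\cB_{2,3}^{\omega_1})/\pi(\cF_{2,3})$ measure precisely the $\pi$-cost of ``finding a bridge'' inside a slice.

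With this in mind I would split $\var_\pi(f\tc\cH\cup\cK)$ into a within-slice part and a between-slice part. The within-slice part is bounded directly by $\Da^{(1)}(f)$ with a universal constant, since a single Markov step already equilibrates $f$ on a slice. For the between-slice part I would apply Proposition~\ref{prop:2block} in a nested fashion: the outer application uses the $\omega_1$-label as the variable and $\cF_{2,3}$ as the rare set, producing the multiplicative cost $\pi(\cB_{2,3}^{\omega_1})/\pi(\cF_{2,3})$; the inner application, performed inside each $\omega_1$-fiber, uses the $\omega_3$-label as the variable and $\cF_{1,2}$ as the rare set, producing the cost $\pi(\cB_{1,2}^{\omega_3})/\pi(\cF_{1,2})$. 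The latter appears squared in $\Ta^{(1)}$ because any effective transport between two distinct $\omega_3$-slices must exit the initial slice through $\cF_{1,2}$ \emph{and} enter the target slice through $\cF_{1,2}$, one factor for each crossing, whereas only a single $\cK$-crossing is required in the $\omega_1$-direction.

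The main obstacle is that Proposition~\ref{prop:2block} is designed for a product-type splitting, whereas $\cH\cup\cK$ is a genuine union that is not of product form. A clean workaround is to replace the outer two-block step by a canonical-path estimate: given $\omega\in\cH$ and $\omega'\in\cK$, connect them through a three-leg path whose middle leg lies entirely in $\cH\cap\cK$ and whose endpoints are reached by a single $\cH$-move and $\cK$-move respectively. The cost of realising the two endpoints of the middle leg in $\cH\cap\cK$ is controlled by Proposition~\ref{prop:2block} (producing the factors $\pi(\cB_{1,2}^{\omega_3})/\pi(\cF_{1,2})$ and $\pi(\cB_{2,3}^{\omega_1})/\pi(\cF_{2,3})$), while the middle leg itself is handled by a second application of Proposition~\ref{prop:2block} to the pair of variables $\omega_1,\omega_3$ inside the bridge, which is exactly what produces the asymmetry in $\Ta^{(1)}$. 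The delicate part will be the bookkeeping of constants, to ensure the bound collapses to the advertised $c\,\Ta^{(1)}\Da^{(1)}(f)$ with a universal $c$, without spurious dependence on the size of $\cA_1,\cA_3$ or on the individual measures $\pi_i$.
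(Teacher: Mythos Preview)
The paper takes a completely different route: rather than manipulating the Dirichlet form via Proposition~\ref{prop:2block} or canonical paths, it bounds the relaxation time by a direct coupling. Two copies of the chain are run with shared Poisson clocks, and each $\cH$-update is implemented by first flipping a Bernoulli coin with success probability $\pi(\cF_{1,2}\mid\cB_{1,2}^{\omega_3})$ and, on success, resampling $(\omega_1,\omega_2)$ from $\pi(\cdot\mid\cF_{1,2})$---a law that does \emph{not} depend on $\omega_3$, since $\cF_{1,2}\subset\cB_{1,2}^{\omega_3}$ for every $\omega_3\in\cA_3$. The key claim is that after three consecutive rings in the order (clock~1, clock~2, clock~1), all with successful coins, any two initial conditions have coalesced: the first ring forces both copies into $\cK$ (by~\eqref{H1} if the update succeeds, trivially if it fails since then $\omega\in\cK\setminus\cH$), the second synchronises coordinates $2,3$ and lands both copies in $\cH$ (by~\eqref{H2}), and the third synchronises coordinates $1,2$. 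The probability of such a triple in a unit time interval is $\Omega(1/\Ta^{(1)})$, which also explains transparently why the $\cF_{1,2}$-ratio is squared (two rings of clock~1) while the $\cF_{2,3}$-ratio appears once.

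Your plan has a real obstruction that you half-identify but do not resolve. Proposition~\ref{prop:2block} requires a product measure on two independent blocks; conditioning on $\cH\cup\cK$ destroys that, and there is no evident way to restore it by ``nesting''---the slices $\{\omega_3=\text{const}\}\cap(\cH\cup\cK)$ are not of the required product form, nor is the induced law on the label $\omega_3$ a marginal of anything independent of the rest. Your fallback to canonical paths is the right instinct (it is morally the functional shadow of the three-ring coupling), but as written it is not a proof: the paths are not specified, they are not shown to stay in $\cH\cup\cK$, and the congestion is not bounded. In particular, avoiding a spurious factor of $|\cA_3|$ or $\max_{\omega_3}\pi(\cB_{1,2}^{\omega_3})^{-1}$ in the congestion hinges on routing through the $\omega_3$-\emph{independent} target law $\pi(\cdot\mid\cF_{1,2})$, which is precisely the device the paper exploits and which is absent from your outline.
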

\begin{proof}
Consider the Markov chain $(\o(t))_{t\ge 0}$ determined by the Dirichlet form $\Da^{(1)}$ as described in Observation \ref{obs:aux1}.
Given two arbitrary initial conditions $\o(0)$ an $\o'(0)$ we will
construct a coupling of the two chains such that with probability $\O(1)$ we have $\o(t)=\o'(t)$
for any $t>\Ta^{(1)}$. Standard arguments (see for example \cite{Levin09}*{Theorem 12.4 and Corollary 5.3})
then prove that for this chain it holds $\trel=O(\Ta^{(1)})$ and the conclusion of the proposition follows. 
To construct our coupling,  we use the following representation of the Markov chain. 
We are given two independent Poisson clocks with
rate one and the chain transitions occur only at the clock rings. Suppose that the first clock rings. If the current configuration
$\o$ does not belong to $\cH$ the ring is ignored.  Otherwise, a Bernoulli variable
$\xi$ with probability of success $\pi(\cF_{1,2}\tc \cB_{1,2}^{\o_3})$
is sampled. If
$\xi=1$, then the pair $(\o_1,\o_2)$ is resampled w.r.t.\ the measure $\pi(\cdot\tc
\cF_{1,2},\cB_{1,2}^{\o_3})$, while if $\xi=0$, then $(\o_1,\o_2)$ is
resampled w.r.t.\ the measure $\pi(\cdot\tc
\cF^c_{1,2},\cB_{1,2}^{\o_3})$. Clearly, in doing so 
the couple $(\o_1,\o_2)$ is resampled w.r.t.\ $\pi(\cdot\tc
\cB_{1,2}^{\o_3})$. Similarly if the second clock rings but with $\cH$,
$(\o_1,\o_2)$, $\cF_{1,2}$ and $\cB_{1,2}^{\o_3}$ replaced by $\cK$,
$(\o_2,\o_3)$, $\cF_{2,3}$ and $\cB_{2,3}^{\o_1}$ respectively.
It is important to notice that  $\pi(\cdot\tc
\cF_{1,2},\cB_{1,2}^{\o_3})=\pi(\cdot\tc
\cF_{1,2})$ for all
$\o_3\in \cA_3$, as, by assumption, $\cF_{1,2}\subset
\bigcap_{\o_3\in \cA_3}\cB_{1,2}^{\o_3}$. Similarly, $\pi(\cdot\tc
\cF_{2,3},\cB_{2,3}^{\o_1})=\pi(\cdot\tc
\cF_{2,3})$ for all $\o_1\in \cA_1$.

In our coupling both chains use the same clocks. Suppose that the
first clock rings and that the current pair of configurations is
$(\o,\o')$. Assume also  that at
least one of them, say $\o$, is in $\cH$ (otherwise, both remain unchanged). In order
to construct the coupling update we proceed as follows.
\begin{itemize}
\item If $\o'\notin
\cH$ then $\o$ is updated as described above, while $\o'$ stays
still.
\item If $\o'\in \cH$ we first maximally couple the two Bernoulli variables $\xi,\xi'$ corresponding to $\o,\o'$ respectively. Then:
\begin{itemize}
\item if $\xi=\xi'=1$, we update both $(\o_1,\o_2)$ and $(\o'_1,\o'_2)$
  to the \emph{same} couple $(\eta_1,\eta_2)\in \cF_{1,2}$ with
  probability $\pi((\eta_1,\eta_2)\tc \cF_{1,2});$ 
\item  otherwise we resample $(\o_1,\o_2)$ and $(\o'_1,\o'_2)$ independently from their respective law given $\xi,\xi'$.
\end{itemize}
\end{itemize}
Similarly if the ring comes from the second clock. The final coupling is then equal to the Markov chain on $\O\times \O$ with the transition rates described above. 
Suppose now that there are three consecutive rings occurring at times $t_1<t_2<t_3$ such that:
\begin{itemize}
\item
the first and last ring come from the first clock while the second ring comes from the second clock, and
\item the sampling of the Bernoulli variables (if any) at times $t_1$, $t_2$ and $t_3$ all produce the value one. 
\end{itemize}
Then we claim that at time $t_3$ the two copies are coupled.

To prove the claim, we begin by observing that after the first update at $t_1$ both copies
of the coupled chain belong to $\cK$. Here we use \eqref{H1}. Indeed, if the first update is successful for $\o$ (\emph{i.e.}\ $\o\in \cH$) then the updated configuration belongs to $\cF_{1,2}\times\{\o_3\}\subset \cK$, because of our assumption $\xi=1$. If, on the contrary, the first update fails (\emph{i.e.}\ $\o\not\in \cH$) then $\o\in \cK\setminus \cH$ before and after the update. The same applies to $\o'$.

Next, using again the assumption on the Bernoulli variables together
with the previous observation, we get that after the second ring the
new pair of current configurations agree on the second and third
coordinate. Moreover both copies belong to $\cH$ thanks to \eqref{H2}. Finally, after the third ring the two copies couple on the first
and second coordinates using
again the assumption on the outcome for the Bernoulli variables.

In order to conclude the proof of the proposition it is enough to observe that for any given time interval $\D$ of length one the probability that  there exist $t_1<t_2<t_3$ in $\D$ satisfying the requirements of the
claim is bounded from below by
\[
c\min_{\omega_3\in\cA_3}
\pi\big(\mathcal F_{1,2}\tc \cB_{1,2}^{\omega_3}\big)^{2} \min_{\omega_1\in\cA_1}
\pi\big(\mathcal F_{2,3}\tc \cB^{\omega_1}_{2,3}\big),
\]
for some constant $c>0$. 
\end{proof}
In the same setting consider 
 two other events $\cC_{1,2}\subset\O_1\otimes\O_2$, $\cC_{2,3}\subset\O_2\otimes \O_3$ and let
\begin{align*}
  \cM&{}=\cA_3\cap \cC_{1,2},&\cN  &{}=\cA_1\cap
  \cC_{2,3}.
\end{align*}
The Dirichlet form of our second Markov chain on $\cM\cup \cN$  is then
\begin{multline}
  \label{Dir2}
 \Da^{(2)}(f) =\pi\Big(\1_{ \cM}\var(f\tc \cC_{1,2},\o_3)+\1_{ \cM}\var(f\tc \cA_3,\o_1,\o_2)\\
   + \1_{ \cN}\var(f\tc \cC_{2,3},\o_1)+\1_{ \cN}\var(f\tc \cA_1,\o_2,\o_3)\tc \cM\cup \cN\Big).
\end{multline}
\begin{obs}
Similarly to the first case, the continuous time chain defined by \eqref{Dir2} is reversible
w.r.t.\ $\pi(\cdot\tc \cM\cup \cN)$ and it can be described as
follows. If $\o\in \cM$ then with rate one $(\o_1,\o_2)$ is
resampled w.r.t.\ $\pi_1\otimes\pi_2(\cdot\tc \cC_{1,2})$ and, independently at unit rate, $\o_3$ is
resampled w.r.t.\ $\pi_3(\cdot\tc \cA_3)$. Similarly, independently from the previous updates at rate one, if $\o\in \cN$ then
$(\o_2,\o_3)$ is resampled w.r.t.\ $\pi_2\otimes\pi_3(\cdot\tc
\cC_{2,3})$ and, independently, $\o_1$ is resampled from $\pi_1(\cdot\tc \cA_1)$.       
\end{obs}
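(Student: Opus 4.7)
The observation bundles together three essentially routine facts about the chain described in words: (a) it preserves the state space $\cM\cup\cN$; (b) its Dirichlet form is exactly the right-hand side of \eqref{Dir2}; and (c) it is reversible with respect to $\pi(\cdot\tc\cM\cup\cN)$. I would prove them by direct unfolding, in the same spirit as Observation~\ref{obs:aux1}.

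For (a), each of the four moves preserves $\cM\cup\cN$ when considered separately. The $(1,2)$-resampling starting from $\o\in\cM$ produces $(\o_1',\o_2')\in\cC_{1,2}$ and leaves $\o_3\in\cA_3$ untouched, so the output lies in $\cM=\cA_3\cap\cC_{1,2}$; the $\o_3$-update starting from $\o\in\cM$ keeps $(\o_1,\o_2)\in\cC_{1,2}$ and outputs $\o_3'\in\cA_3$, again landing in $\cM$; the two $\cN$-moves are handled in the same way.

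For (b), I would apply the standard heat-bath identity: for a rate-one kernel that resamples a block of coordinates from their joint conditional law given the rest, the associated Dirichlet form equals the expectation of the corresponding conditional variance of $f$. Writing $\Da^{(2)}(f)=-\pi(f\cL f\tc\cM\cup\cN)$ for the generator $\cL$ obtained by summing the four heat-bath pieces (each multiplied by the indicator $\1_\cM$ or $\1_\cN$ that enables the move) yields \eqref{Dir2} term by term. For (c), detailed balance is checked for each piece in isolation: e.g., for the $(1,2)$-piece on $\cM$, the product $\pi(\o\tc\cM\cup\cN)\cdot r(\o,\o')$ equals a constant multiple of $\pi_1(\o_1)\pi_2(\o_2)\pi_1(\o_1')\pi_2(\o_2')\pi_3(\o_3)\1_{\cC_{1,2}}(\o_1,\o_2)\1_{\cC_{1,2}}(\o_1',\o_2')\1_{\cA_3}(\o_3)$, which is manifestly symmetric under $\o\leftrightarrow\o'$. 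The only mild subtlety, and the single point requiring care, is that an individual move can take the chain out of one of the two sets among $\cM,\cN$ (for instance the $\o_3$-move on $\cM$ may leave $\cN$), so invariance of $\cM\cup\cN$ and reversibility have to be verified piece by piece rather than for the full generator in one stroke.
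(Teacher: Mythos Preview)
Your proposal is correct and is precisely the routine verification the paper has in mind; in fact the paper does not supply a proof of this observation at all, treating it (like Observation~\ref{obs:aux1}) as immediate from the heat-bath structure of the Dirichlet form. Your decomposition into (a) invariance of $\cM\cup\cN$ under each of the four moves, (b) the standard identity between block heat-bath generators and conditional variances, and (c) piecewise detailed balance is exactly what is needed, and the subtlety you flag---that a single move may leave one of $\cM,\cN$ while remaining in their union---is the only point worth noting.
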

\begin{prop}\label{key:bisection2}
There exists a universal constant $c$ such that the following holds. Suppose that  there exist an event $\hat \cC_{1,2}\subset \cC_{1,2}$
and a collection
$(\cA_3^{\omega_1,\omega_2})_{(\omega_1,\omega_2)\in
  \hat\cC_{1,2}}$ of subsets of $\cA_3$ such that 
\begin{equation}
\label{Hbar} 
\big\{\omega :(\omega_1,\omega_2) \in \hat\cC_{1,2}\text{ and }
  \o_3\in \cA_3^{\o_1,\o_2}\big\}\subset \cM\cap \cN,
\end{equation}
and let
\[\Ta^{(2)}= \max_{(\o_1,\o_2)\in \hat\cC_{1,2}}\frac{\pi(\cA_3)}{\pi(\cA_3^{\o_1,\o_2})} \times \frac{\pi( \cC_{1,2})}{\pi( \hat\cC_{1,2})}.
\]
Then there exists $c>0$ such that for all $f:\cM\cup \cN\to \bbR$, 
\[
  \var(f\tc \cM\cup \cN)\le c\Ta^{(2)} \Da^{(2)}(f).
  \]
\end{prop}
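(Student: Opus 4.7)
The strategy I will use is the coupling argument from Proposition \ref{key:bisection}, adapted to handle the four elementary updates that appear in $\Da^{(2)}$ instead of two. I represent the chain generated by $\Da^{(2)}$ by means of four independent rate-$1$ Poisson clocks of types $(1,2)$, $(3)$, $(1)$, $(2,3)$: the $(1,2)$-clock, when it rings at $\o\in\cM$, resamples $(\o_1,\o_2)$ from $\pi_1\otimes\pi_2(\cdot\tc\cC_{1,2})$; the $(3)$-clock, when it rings at $\o\in\cM$, resamples $\o_3$ from $\pi_3(\cdot\tc\cA_3)$; the $(1)$- and $(2,3)$-clocks analogously perform the two $\cN$-updates. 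As in the proof of Proposition \ref{key:bisection}, I split each of the two $\cM$-rings by an auxiliary Bernoulli variable, with success probability $\pi(\hat\cC_{1,2})/\pi(\cC_{1,2})$ for the $(1,2)$-ring (so that the conditional resample is from $\pi_1\otimes\pi_2(\cdot\tc\hat\cC_{1,2})$) and $\pi_3(\cA_3^{\o_1,\o_2})/\pi_3(\cA_3)$ for the $(3)$-ring (so that the conditional resample is from $\pi_3(\cdot\tc\cA_3^{\o_1,\o_2})$, evaluated at the current post-$(1,2)$-ring pair).

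Two copies $\o,\o'$ of the chain will be driven by common clocks, with Bernoullis identified whenever both copies perform the corresponding update and maximal couplings applied to all resamples. I then consider the event $\cE$ that in the time window $[0,1]$ the four clocks ring exactly once, in the order $(1,2),(3),(1),(2,3)$, no other rings occur, and every Bernoulli that gets sampled equals $1$. Since each Bernoulli succeeds with probability at least $\min\{\pi(\hat\cC_{1,2})/\pi(\cC_{1,2}),\,\min_{(\o_1,\o_2)\in\hat\cC_{1,2}}\pi_3(\cA_3^{\o_1,\o_2})/\pi_3(\cA_3)\}$, the definition of $\Ta^{(2)}$ yields $\bbP(\cE)\ge c_0/\Ta^{(2)}$ for some universal $c_0>0$, independently of the initial configurations.

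On $\cE$ the two copies coalesce: any copy in $\cM$ at time $0$ lands in $\cM\cap\cN$ after the $(1,2)$- and $(3)$-rings by \eqref{Hbar}, while any copy initially in $\cN\setminus\cM$ is untouched by the two $\cM$-rings and remains in $\cN$; thus both copies lie in $\cN$ at the moments of the subsequent $(1)$- and $(2,3)$-rings, and the maximal couplings equalize first $\o_1$ and then $(\o_2,\o_3)$, producing $\o(1)=\o'(1)$. The hard case is the mixed initial condition $\o\in\cM\setminus\cN$, $\o'\in\cN\setminus\cM$: only there are the full strength of the Bernoulli splitting and of the hypothesis \eqref{Hbar} required, since otherwise both copies share $\cM$ throughout or both share $\cN$ throughout and the two corresponding maximal couplings force coalescence unconditionally. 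From the uniform lower bound on the coupling probability in any unit-time window, a standard argument (\cite{Levin09}*{Corollary~5.3 and Theorem~12.4}, exactly as at the end of the proof of Proposition \ref{key:bisection}) yields $\trel=O(\Ta^{(2)})$ and hence the claimed Poincar\'e inequality.
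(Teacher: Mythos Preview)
Your proof is correct and follows the same coupling strategy as the paper. The only difference is cosmetic: the paper dispenses with the Bernoulli splitting and simply drives all copies by a single stream of shared proposal variables $((\o_1^{(i)},\o_2^{(i)}))_i$, $(\eta_3^{(i)})_i$, $((\o_2^{(i)},\o_3^{(i)}))_i$, $(\eta_1^{(i)})_i$, declaring the favourable event to be that four consecutive rings come from the four clocks in order and that the proposed $(\eta_1,\eta_2)\in\hat\cC_{1,2}$ and $\eta_3\in\cA_3^{\eta_1,\eta_2}$; this sidesteps the awkwardness that your $(3)$-Bernoulli parameter $\pi_3(\cA_3^{\o_1,\o_2})/\pi_3(\cA_3)$ is only defined when the current pair lies in $\hat\cC_{1,2}$, and it makes the coupling event manifestly independent of the initial states. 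Your parenthetical remark that the non-mixed cases coalesce ``unconditionally'' is not quite right (e.g.\ if $\o\in\cM\cap\cN$ and $\o'\in\cN\setminus\cM$, the first two rings may push $\o$ out of $\cN$ unless the Bernoullis succeed), but this remark is inessential to the argument.
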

\begin{proof}
We proceed as in the proof of Proposition \ref{key:bisection} with
  the following representation for the Markov chain.
  We are given four independent Poisson clocks of rate one and each clock comes equipped with a collection of i.i.d.\ random variables. The four independent collections, the first being for the first clock etc, are
  \[
    \big((\o^{(i)}_1, \o^{(i)}_2)\big)_{i=1}^\infty, \quad \big(\eta^{(i)}_3\big)_{i=1}^\infty, \quad\big((\o^{(i)}_2, \o^{(i)}_3)\big)_{i=1}^\infty,\quad \big(\eta^{(i)}_1\big)_{i=1}^\infty,
  \]
where the laws of the collections are $\pi_1\otimes\pi_2(\cdot\tc \cC_{1,2})$, $\pi_3(\cdot\tc \cA_3)$, $\pi_2\otimes\pi_3(\cdot\tc \cC_{2,3})$ and $\pi_1(\cdot\tc  \cA_1)$ respectively. 

At each ring of the first and second clocks the configuration is updated with the variables from the corresponding collection iff $\o\in\cM$. Similarly for the third and fourth clocks with $\cN$. In order to couple different initial conditions, we use the same collections of clock rings and update configurations.
  
Suppose now that there are four consecutive rings $t_1<t_2<t_3<t_4$, coming from the first, second, third and fourth clocks in that order, such that:
\begin{itemize}
\item at $t_1$ the proposed update $(\eta_1,\eta_2)$ of the first two coordinates belongs to $\hat \cC_{1,2}$, and
\item at $t_2$ the proposed update $\eta_3$ of the third coordinate belongs to $ \cA_3^{(\eta_1,\eta_2)}$.
\end{itemize}
We then claim that after $t_4$ all initial conditions $\o$ are coupled. To prove
this, we first observe that after the second ring each chain
belongs to $\cN$. Indeed, if $\o\not\in\cM$, then
the first two proposed updates are ignored and the configuration $\o\in\cN\setminus\cM$. If, on the contrary, $\o\in\cM$, then both updates are successful and the configuration is updated to $(\eta_1,\eta_2,\eta_3)\in\hat\cC_{1,2}\times\cA^{\eta_1,\eta_2}_3\subset \cM\cap\cN$ by \eqref{Hbar}. 

Since after $t_2$ the state of the chain is necessarily in $\cN$, the third and fourth updates to states $(\eta'_2,\eta'_3)$ and $\eta'_1$ respectively are both successful and thus any initial condition leads to the state $(\eta'_1,\eta'_2,\eta'_3)$ after $t_4$, which proves the claim. The proof is then completed as in Proposition \ref{key:bisection}.
\end{proof}

\section{Mobile droplets}
\label{sec:droplet}
This section, which represents the core of the paper,
is split into two parts: 
\begin{itemize}
\item the definition of mobile droplets together with the choice of the mesoscopic critical length scale $\Ld$ characterising their linear size;
\item  the analysis of two key properties of mobile droplets namely: \begin{itemize}
  \item their  equilibrium probability $\rd$;
  \item the relaxation time of FA-$2$f in a box of linear
    size $\Theta(L_D)$ \emph{conditionally} on the presence of a
    mobile droplet.
\end{itemize}
\end{itemize}
Mobile droplets are defined as boxes of suitable
linear size in which the configuration of infection is \emph{super-good} (see Definition \ref{def:droplets}). In turn, the super-good event (see Section
\ref{sec:construction}) is constructed recursively via a multi-scale
procedure on a sequence of exponentially increasing length scales
$(\ell_n)_{n=1}^N$ (see Definition \ref{def:scales}). 
While clearly inspired by the classical procedure used in bootstrap percolation \cite{Holroyd03}, an important 
novelty in our construction is the freedom that we allow for the position of the super-good core of scale $\ell_n$ inside the super-good region of
scale $\ell_{n+1}$.
The final scale
$\ell_N$ corresponds to the critical scale $\Ld$ mentioned above and a
convenient choice is $\Ld\sim q^{-17/2}$ (see
\eqref{defellN}). There is nothing special in the exponent $17/2$:  as
long as we choose a sufficiently large exponent our results would not
change. The choice of $L_D$ is in fact only dictated by the
requirement that w.h.p.\ there exist no $\Ld$
consecutive lattice sites at distance $\exp(\log^{O(1)}(1/q)/q)$ from the
origin which are healthy and $\Ld= e^{o(1/q)}$.
Finally, similarly to their bootstrap percolation counterparts, the probability $\rd$ of mobile droplets crucially satisfies  $\rd\simeq (\tbp)^{-2}$ (see Proposition \ref{prop:probadroplet}) and in general for FA-$2$f in dimension $d$ it satisfies $\rd\simeq (\tbp)^{-d}$.

The
extra degree of freedom in the construction of the super-good event provides a much more flexible structure that
can be moved around using the FA-$2$f moves \emph{without} going through the
bottleneck corresponding to the creation of a brand new additional droplet
nearby. The main consequence of this feature (see Proposition \ref{thm:droplet}) is that the relaxation
time of the FA-$2$f dynamics in a box of side $\Ld$ conditioned
on being super-good is sub-leading w.r.t.\ $\rd^{-1}$ as $q\to 0$
and it contributes only to the second order term in Theorem \ref{th:FA2f}.

\subsection{Notation}
\label{sec:notation}

For any integer $n$, we write $[n]$ for the set $\{1,\dots,n\}$.  We denote by $\vec e_1, \vec e_2$ the standard basis of $\bbZ^2$, and write $d(x,y)$ for the Euclidean distance between $x,y\in \bbZ^{{2}}$. Given a set $\Lambda\subset \bbZ^2$, we set $\partial \Lambda:=\{y\in \bbZ^{d}\setminus \Lambda,d(y,\L)=1\}$.
Given two positive integers $a,b$, we write
$R(a,b)\subset\mathbb Z^2$ for the rectangle $[a]\times[b]$ and we refer to $a,b$ as the \emph{width} and \emph{height} of $R$ respectively. We also write $\partial_r R$ ($\partial_l R$) for the column $\{a+1\}\times [b]$ (the column $\{0\}\times [b]$), and $\partial_u R$ ($\partial_d R$) for the the row $[a]\times\{b+1\}$ (the row $[a]\times\{0\}$). Similarly for any rectangle of the form $R+x,x\in \bbZ^2$.

Given $\L\subset \bbZ^2$ and $\omega\in \Omega$, we  write $\omega_{\Lambda}\in\Omega_{\Lambda}:=\{0,1\}^\L$ for the restriction of $\omega$ to $\Lambda$. The configuration (in $\O$ or $\O_\L$) identically equal to one is denoted by $\mathbf 1$. Given disjoint $\Lambda_1,\L_2\subset \bbZ^2$, $\o^{(1)}\in \O_{\L_1}$ and $\o^{(2)}\in\O_{\L_2}$, we write $\omega^{(1)}\cdot\omega^{(2)}\in \Omega_{\Lambda_1\cup\Lambda_2}$ for the configuration equal to $\omega^{(1)}$ in $\Lambda_1$ and to $\omega^{(2)}$ in $\Lambda_2$. We write $\mu_\L$ for the marginal of $\mu_q$ on $\O_\L$ and $\var_\L(f)$ for the variance of $f$ w.r.t.\ $\mu_\L$, given the variables $(\o_x)_{x\notin \L}$. 

\subsection{Super-good event and mobile droplets}
\label{sec:construction}
As anticipated, mobile droplets will be square regions of a certain side length in which the infection configuration satisfies a
specific condition dubbed \emph{super-good}. The latter requires in turn the definition of a key event for rectangles---\emph{$\o$-traversability} (see also
\cite{Holroyd03})---together with a sequence of exponentially increasing length scales. 
\begin{figure}
\centering
  \begin{tikzpicture}[>=latex,x=0.5cm,y=0.5cm, scale=1.5]
\node at (0.7,0.5) {$R$};
\foreach \i in {2,...,12}
{\draw[gray,opacity=0.5]  (-3,0.5*\i)-- (1,0.5*\i);}

\foreach \j in {2,...,10}
{\draw[gray]  (-4+0.5*\j,1)-- (-4+0.5*\j,6);}
\draw [->,very thick, dashed] (-3,3.8) --
(1,3.8);
\draw [->,very thick] (-0.8,6) -- (-0.8,1);
\fill (-2.5,0.495 * 3-0.5) circle (0.5mm);
\fill (-2,0.495 * 3+4) circle (0.5mm);
\fill (-1.5,0.495 * 3+1) circle (0.5mm);
\fill (-1.5,0.495 * 3+3) circle (0.5mm);
\fill (-0.5,0.495 * 3) circle (0.5mm);
\fill (0.5,0.495 * 5) circle (0.5mm);
\foreach \i in {2,...,12}
{
\fill (1.5,0.495 * \i+0.01) circle (0.5mm);
}
\foreach \i in {1,...,3}
{
\fill (-1.5,0.495 * \i +2) circle (0.5mm);
}
\end{tikzpicture}
\caption{
Black circles denote infected sites. The boundary condition $\o$ in the figure is fully infected on $\partial_r R$ and fully healthy elsewhere. The rectangle $R$ is $\o$-right-traversable (i.e $\cT_\rightarrow^\o(R)$ occurs) but it is neither $\o$-up-traversable, nor $\o$-left-traversable. It is also down-traversable ($\cT_\downarrow(R)$ occurs) but not traversable in any other direction.
}
\label{fig:traversability}
\end{figure}
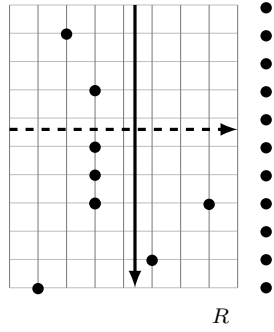

\begin{defn}[$\o$-Traversability]
\label{def:doubleg}
Fix a rectangle $R=R(a_1,a_2)+x$ together with $\eta\in \O_R$ and a boundary configuration $\omega\in\Omega_{\partial R} $. 
We say that $R$ is \emph{$\o$-right-traversable} for $\eta$ if each pair of adjacent columns of $R\cup \partial_r R$ contains at least one infection in $\eta\cdot\o$ (see Figure \ref{fig:traversability}). We denote this event by ${\mathcal T}^\o_{\rightarrow}(R)\subset\O_R$.

We say that $R$ is \emph{right-traversable} for $\eta$ if it is $\mathbf 1$-right-traversable or, equivalently, if it is $\o$-right-traversable for all $\omega$. We denote this event by $\mathcal T_{\rightarrow}(R)\equiv \cT_\rightarrow^{\mathbf 1}(R)\subset\O_R$. 

Up/left/down-traversability and $\o$-up/left/down-traversability is defined identically up to rotating $\eta$ and $\o$ appropriately (see Figure \ref{fig:traversability}).
\end{defn}
In figures we depict traversability by solid arrows and $\o$-traversability by dashed arrows (see Figure \ref{fig:traversability}). Notice that right-traversability requires that the rightmost column contains an infection. Similarly for the other directions.

\begin{defn}[Length scales and nested rectangles] 
\label{def:scales}
For all integer $n$ we set\footnote{This choice of 
geometrically increasing length scales is inspired by \cite{Gravner12}.}
\begin{equation}\label{def:fn}
  \ell_m=
  \begin{cases}
1 & \text{ if $m=0$},\\
\big\lfloor \frac{\exp{(m\sqrt q)}}{\sqrt q}\big\rfloor & \text{ if $m \geq 1$}    
  \end{cases}
\end{equation}
and 
\begin{equation}\label{def:Lambdan}
\Lambda^{(n)}=
\begin{cases}
R(\ell_{n/2},\ell_{n/2}) & \text{if $n$ is even,}\\
R(\ell_{(n+1)/2},\ell_{(n-1)/2})& \text{if $n$ is odd,}
\end{cases}
\end{equation}
(see Figure \ref{fig:super-good}). We say that a rectangle $R$ is of \emph{class $n$} if there exist $w,z\in\bbZ^2$ such that $\Lambda^{(n-1)}+w\subsetneq R\subset \Lambda^{(n)}+z$. We refer to single sites as rectangles \emph{of class $0$}.
\end{defn}
Note that $(\Lambda^{(2m)})_{m\geq 0}$ is a sequence of squares, while $(\Lambda^{(2m+1)})_{m\geq 0}$ is a sequence of rectangles elongated horizontally and $\Lambda^{(n_1)}\subset \Lambda^{(n_2)}$ if $n_1<n_2$. Moreover, for $n=2m> 0$, a rectangle of class $n$ is a rectangle of width $\ell_m$ and height $a_2\in (\ell_{m-1},\ell_m]$ and for  $n=2m+1$ it is a rectangle of height $\ell_m$ and width $a_1\in (\ell_m,\ell_{m+1}]$.

We are now ready to introduce the key notion of the $\o$-super-good event on different scales. This event is defined recursively on $n$ and it has a hierarchical structure. Roughly speaking, a rectangle $R$ of the form $R=\L^{(n)}+x, x\in \bbZ^2$, is $\o$-super-good if it contains a $\mathbf 1$-super-good rectangle $R'$ of the form $R'=\L^{(n-1)}+x'$ called the \emph{core} and outside the core it satisfies certain $\o$-traversability conditions (see Figure \ref{fig:super-good}). 

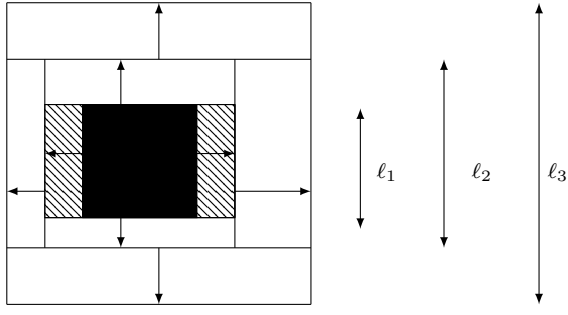
\begin{figure}
  \centering
  \begin{tikzpicture}[>=latex,x=0.5cm,y=0.5cm]
\draw[fill] (10,3.3) rectangle (13,6.3);
\draw[pattern=north west lines] (9,3.3) rectangle (14,6.3); 
\draw [<->] (22,1)--(22,9);
\node at (22.5,4.5) {$\ell_3$};
\draw [<->] (17.3,3)--(17.3,6.2);
\node at (18,4.5) {$\ell_{1}$};
\draw [<->] (19.5,2.5)--(19.5,7.5);
\node at (20.5,4.5) {$\ell_2$};
\draw  (8, 1)-- (8,9);
\draw  (16,1)-- (16,9);
\draw  (8, 1)-- (16,1);
\draw  (8, 9)-- (16,9);
\draw  (8, 7.5)-- (16,7.5);
\draw  (8, 2.5)-- (16,2.5);
\draw [->] (12,7.5) -- (12,9);
\draw [->] (11,6.3) -- (11,7.5);
\draw [->] (11,3.3) -- (11,2.5);
\draw [->] (12,2.5) -- (12,1);
\draw [->] (14,4) -- (16,4);
\draw [->] (9,4) -- (8,4);
\draw [->] (10,5) -- (9,5);
\draw [->] (13,5) -- (14,5);
\draw  (9, 2.5)-- (9,7.5);
\draw  (14, 2.5)-- (14,7.5);
\draw  (9, 3.3)-- (14,3.3);
\draw  (9, 6.3)-- (14,6.3);
\draw  (10, 3.3)-- (10,6.3);
\draw  (14, 3.3)-- (14,6.3);
\end{tikzpicture}
\caption{An example of super-good configuration in the square $\Lambda^{(6)}$. The black square, of the form $\Lambda^{(2)}+x$, is completely infected and it is a super-good core for the rectangle of the form $\L^{(3)}+x$ formed by it together with the two hatched rectangles. This rectangle of the form $\L^{(3)}+x$ is also super-good because of the right/left-traversability of the hatched parts (arrows) and it is a super-good core for the square containing it and so on.}
\label{fig:super-good}
\end{figure}

\begin{defn}[$\o$-Super-good rectangles]
\label{def:supergood}
Let us fix an integer $n\geq 0$, a rectangle $R=R(a_1,a_2)+x$ of class $n$ and $\omega\in\O_{\partial R}$. 
We say that $R$ is \emph{$\o$-super-good} for $\eta\in\O_{R}$ and denote the corresponding event by $\SG^\o(R)$ if the following occurs in $\eta\cdot\o$.
\begin{itemize}
\item $n=0$. In this case $R$ consists of a single site and
  $\SG^\o(R)$ is the event that this site is infected.
\item $n=2m$. For any $s\in [0,\ell_{m}-\ell_{m-1}]$ write $R=C_s\cup (\Lambda^{(n-1)}+ x+s\vec e_2)\cup D_s$,  where $C_s$ ($D_s$) is the part of $R$ below (above) $\L^{(n-1)}+x +s\vec e_2$. With this notation we set
\[\SG^\o_s(R):=\mathcal{T}^{\o}_{\downarrow}(C_s)\cap\SG^{\bf 1}(\Lambda^{(n-1)}+x+s\vec e_2)  \cap      \mathcal{T}^{\o}_{\uparrow}(D_s)\]
and let $\SG^\o(R)=\bigcup_{ s\in    [0,\ell_{m}-\ell_{m-1}]}\SG^\o_s(R)$.
\item $n=2m+1$. In this case $\SG^\o(R)$ requires that there is a core in $R$ of the form $\Lambda^{(n-1)}+ x+s\vec e_1, s\in [0,\ell_{m+1}-\ell_{m}]$, which is
$\mathbf 1$-super-good, and the two remaining rectangles forming $R$ to the left and to the right of the core are $\o$-left-traversable and $\o$-right-traversable respectively.
   \end{itemize}
We will say that $R$ is \emph{super-good} if it is  $\mathbf 1$-super-good and denote the corresponding event by $\SG(R)$.
\end{defn}
Note that $\SG^\o(R)$ is monotone in the boundary condition in the sense that if $R$ is super-good then $R$ is $\o$-super-good for all $\o\in\O_{\partial R}$. In order to make notation more concise, whenever a $\SG$ event appears in an average or a variance with respect to a rectangle $R$, we leave out the argument $R$ of the $\SG$ event, unless confusion arises. For example, $\m_R(\SG)$ will stand for $\m_R(\SG(R))$.
\begin{rem}[Irreducibility of the FA-$2$f chain in $\SG^\o(R)$]
\label{rem:irreducibility}
It is not difficult to verify that for all $\eta\in \SG^\o(R)$, there exists a sequence of legal updates that transforms $\eta$ into the fully infected configuration. Since the FA-$2$f dynamics is reversible, the above property implies that the FA-$2$f chain in $R$ restricted to $\SG^\o(R)$ is irreducible.
\end{rem}

Now let
\begin{equation}
\label{defn}    
N:=\Big\lceil \frac{8\log(1/q)}{\sqrt q}\Big\rceil
\end{equation}
and
observe that
\begin{equation}
\label{defellN}
 \ell_{N}={q^{-17/2+o(1)}}.\end{equation}

\begin{defn}[Mobile droplets]\label{def:droplets}
Given $\o\in \O$, a \emph{mobile droplet} for $\o$ is any square $R$
of the form $R=\Lambda^{(2N)}+x$ for which
$\o_R\in \SG(R)$. We set $\rd=\mu_{\Lambda^{(2N)}}(\SG)$ to be the probability of a mobile droplet.
\end{defn}
The first key property of mobile droplets we will need is the following.
\begin{prop}[Probability of mobile droplets]\label{prop:probadroplet}
For all $n\le 2N$,
\[\mu_{\Lambda^{(n)}}
(\SG)
\geq \exp\Big(-\frac{\pi^2}{9q}\big(1+O\big(\sqrt{q}\log^{2}(1/q)\big)\big)\Big).
\]
In particular, this lower bound holds for $\rd$.
\end{prop}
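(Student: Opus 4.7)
The plan is to iterate the recursive super-good definition and reduce the estimate to a classical 2-BP threshold integral. At each scale transition $\Lambda^{(n-1)}\to\Lambda^{(n)}$, by selecting a single value $s=0$ in the union $\SG(\Lambda^{(n)})=\bigcup_s\SG_s(\Lambda^{(n)})$ and using $\mu$-independence of the disjoint super-good core and complementary extra rectangles,
\[
\mu_{\Lambda^{(n)}}(\SG)\;\geq\;\mu_{\Lambda^{(n-1)}}(\SG)\cdot\mu\bigl(\cT^{\mathbf 1}(\text{extras at scale }n)\bigr).
\]
Iterating down to $n=0$ (with subleading base case $\mu_{\Lambda^{(0)}}(\SG)=q$), we obtain $\rd\geq q\prod_{n=1}^{2N}p_n$, where $p_n$ is the $\mathbf 1$-traversability probability of the extras at scale $n$: an $\ell_m\times(\ell_m-\ell_{m-1})$ rectangle (up-traversability) for $n=2m$, and an $(\ell_{m+1}-\ell_m)\times\ell_m$ rectangle (right-traversability) for $n=2m+1$.

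Next, for a rectangle $R(w,h)$ with $\mathbf 1$ boundary, up-traversability is the event that a chain of i.i.d.\ row-emptiness indicators $Y_j\sim\mathrm{Bern}((1-q)^w)$, $j=1,\dots,h$, contains no two consecutive $1$'s. A transfer-matrix computation with Perron eigenvalue $\rho(p)=\tfrac{1}{2}\bigl(1-p+\sqrt{(1-p)(1+3p)}\bigr)$ gives
\[
-\log\mu\bigl(\cT^{\mathbf 1}_\uparrow(R(w,h))\bigr)\;\leq\;h\,\phi(wq)\bigl(1+O(q)\bigr)+O(1),
\]
with $\phi(z):=-\log\rho(e^{-z})$. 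Summing over the $2N$ scales, using $\ell_{m+1}-\ell_{m-1}=2\sinh(\sqrt q)\ell_m=2\sqrt q\,\ell_m(1+O(q))$, and setting $z_m:=\ell_m q$ (so $z_{m+1}-z_m\approx\sqrt q\,z_m$), the resulting bound becomes a Riemann sum converging to $(2/q)\int_0^\infty\phi(z)\,dz$ as $q\to 0$. The integrand $\phi$ coincides with the one appearing in the classical 2-BP threshold computation, so the identity $\int_0^\infty\phi(z)\,dz=\l(2,2)=\pi^2/18$ (verifiable via the quadratic identity $\rho^2=(1-p)(\rho+p)$ and a dilogarithm evaluation, cf.\ \cite{Holroyd03}) yields $-\log\rd\leq\pi^2/(9q)(1+o(1))$.

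For the stated relative error $O(\sqrt q\,\log^2(1/q))$ I would carefully track three sources of correction: (i) the Riemann-sum discretization error of order $O(\sqrt q)$ per step; (ii) the low-scale contribution, where $\phi(z)\sim\tfrac12\log(1/z)$ as $z\to 0$ picks up an extra $O(\log^2(1/q))$ coming from the $O(\log(1/q)/\sqrt q)$ scales with $z_m\lesssim 1$; and (iii) the negligible top-scale truncation at $m=N$, since $\ell_N q\geq q^{-15/2+o(1)}$ makes the tail integrand decay rapidly. The main obstacle is the precise matching of $\phi$ with the BP integrand (to secure the correct leading constant $\pi^2/9$) combined with the delicate discretization control across all $N=\lceil 8\log(1/q)/\sqrt q\rceil$ scales required to attain the second-order correction.
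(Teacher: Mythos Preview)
Your proposal is correct and follows essentially the same route as the paper. In particular, your function $\phi(z)=-\log\rho(e^{-z})$ with $\rho(p)=\tfrac12\big(1-p+\sqrt{(1-p)(1+3p)}\big)$ coincides exactly with the paper's $g(z)=-\log\beta(1-e^{-z})$ where $\beta(u)=\tfrac12\big(u+\sqrt{u(4-3u)}\big)$: the substitution $u=1-p=1-e^{-z}$ gives $\beta(u)=\rho(p)$, so there is no ``matching'' obstacle. The paper likewise fixes all offsets $s=0$, writes the product as $q^{O(N)}\exp\big(-\sum_m(\ell_m-\ell_{m-1})(g(q'\ell_m)+g(q'\ell_{m-1}))\big)$, and controls the Riemann-sum error via convexity of $g$ and the relation $\ell_m-\ell_{m-1}\le 2\sqrt q\,\ell_{m-1}$.

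One small sharpening of your error bookkeeping: the additive correction in $-\log T^{\mathbf 1}(a,b)$ is $O(\log(1/q))$ rather than $O(1)$ (this is the $q^{O(1)}$ prefactor in the paper's Lemma on $T^\omega(a,b)$, needed to cover the small-$z$ regime where the transfer-matrix eigenvalues nearly coalesce). Accumulated over the $2N=O(\log(1/q)/\sqrt q)$ scales this yields $O(\log^2(1/q)/\sqrt q)$ in the exponent, and \emph{this} is the actual origin of the $\log^2(1/q)$ factor in the stated bound---not the low-$z$ Riemann-sum discretisation, which only contributes $O(\log(1/q)/\sqrt q)$.
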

The proof of Proposition \ref{prop:probadroplet} follows from standard
2-BP techniques and it is deferred to Appendix \ref{app:BP}. The
second property of mobile droplets requires a bit of preparation. 

For $\Lambda\subset\mathbb Z^2$, $\omega\in\Omega_{\bbZ^2\setminus\Lambda}$, $\eta\in\O$ and $x\in \L$ we denote by
\[c_x^{\Lambda,\omega}(\eta)=c_x(\eta_\Lambda\cdot \o)\]
with $c_x$ defined in \eqref{def:ratesbis}, 
so that $c_x^{\Lambda,\omega}$ encodes the constraint at $x$ in $\Lambda$ with boundary condition $\o$. Given a rectangle $R$ of class $n$ and $\omega\in\O_{\mathbb Z^2\setminus R}$, let $\gamma^\o(R)$ be the smallest constant\footnote{The non-standard convention that relaxation times are at least 1 is made for convenience.} $C\ge 1$ such that the Poincar\'e
inequality (recall Section \ref{subsec:notation})
\begin{equation}
\label{eq:def:gammaR}
\var_{R}(f\tc\SG^\o)\le C\sum_{x\in R}\mu_{R}(c_x^{R,\omega}\var_x(f)\tc \SG^\o),
\end{equation}
holds for every $f:\O_R\to\bbR$. In the sequel we will sometimes refer to $\g^\o(R)$ as the \emph{relaxation time of $\SG^\o(R)$}. The fact that FA-$2$f restricted to $\SG^\o(R)$ is irreducible (see Remark \ref{rem:irreducibility}) implies that $\gamma^\o(R)$ is finite. However, proving a good upper bound on $\g^\o(R)$ is quite hard.
 
\begin{prop}[Relaxation time of mobile droplets]
\label{thm:droplet}
For all $n\le 2N$ 
\[\max_\o\g^\o(\L^{(n)})\leq \exp\big(O(\log^2(1/q) n )\big).\]
\end{prop}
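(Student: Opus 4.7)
The argument proceeds by induction on $n$, the base case $n=0$ being trivial. For the inductive step we analyse the FA-$2$f chain on $\Omega_{\Lambda^{(n)}}$ conditioned on $\SG^\o(\Lambda^{(n)})$ and aim to establish
\[\gamma^\o(\Lambda^{(n)})\le \exp(O(\log^2(1/q)))\cdot\max_{\o'}\gamma^{\o'}(\Lambda^{(n-1)}).\]
The key structural fact is that $\SG^\o(\Lambda^{(n)})=\bigcup_s \SG^\o_s(\Lambda^{(n)})$ decomposes over the admissible positions $s$ of the super-good core of class $n-1$. We therefore introduce an auxiliary block chain on $\SG^\o(\Lambda^{(n)})$ that (i) equilibrates the configuration at a fixed core position and (ii) shuttles the core between adjacent positions, and we control its Poincar\'e constant via the tools of Section \ref{sec:Poinc}.

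For a fixed core position $s$, the event $\SG^\o_s(\Lambda^{(n)})$ factorises as the inner $\mathbf 1$-super-good event on $\Lambda^{(n-1)}+x+s\vec e$, whose relaxation is handled by the inductive hypothesis, together with the $\o$-traversability of the two complementary strips. In each such strip, the empty sites in the adjacent super-good core together with the traversability condition supply the second empty neighbour required to reduce FA-$2$f constraints to FA-$1$f ones column by column; Proposition \ref{prop:FA1f} then bounds the strip relaxation by $1/q^{O(1)}$.

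To handle the mobility of the core we iterate Proposition \ref{key:bisection} (or \ref{key:bisection2}) along a dyadic bisection of the admissible range of core positions. At each bisection step we split $\Lambda^{(n)}$ into three blocks corresponding to the left, central and right thirds of the current range; we take $\cH$ (respectively $\cK$) to be the $\SG^\o$-event restricted to configurations whose core lies in the right (respectively left) third, and $\cF_{1,2},\cF_{2,3}$ to be the subsets in which the core lies in the central third with $\o$-traversable outer strips. The inclusions \eqref{H1}, \eqref{H2} then hold by construction, and the factor $\Ta^{(1)}$ is a product of traversability-probability ratios and a BP-droplet ratio which, by Proposition \ref{prop:probadroplet} and bounds on strip traversability, is at most $q^{-O(1)}$ per step. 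Since the range of positions has size at most $\ell_N=q^{-O(1)}$, only $O(\log(1/q))$ bisection steps are needed, yielding a total per-scale cost $\exp(O(\log^2(1/q)))$.

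The main obstacle is comparing the auxiliary block-chain Dirichlet form to the FA-$2$f one: each block update must be realisable through legal FA-$2$f moves with only polynomial cost. Inside a fixed-position super-good region this reduces, via the inductive hypothesis and the FA-$1$f strip bound above, to the FA-$2$f Dirichlet form on smaller super-good blocks. The delicate case is the core-shuttle between positions $s$ and $s+1$: here one exploits the overlap between the cores at adjacent positions and the $\o$-traversability of the newly covered and uncovered outer columns to construct a legal FA-$2$f path rearranging the configuration with only polynomial Poincar\'e cost. Combining these ingredients yields the recursive inequality and closes the induction.
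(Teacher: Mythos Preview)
Your outline has the right skeleton (induction on $n$, auxiliary block chains from Section~\ref{sec:Poinc}, $O(\log(1/q))$ bisection steps each costing $q^{-O(1)}$), but there are two concrete gaps where your description diverges from what actually works.

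\textbf{The bisection is over rectangle sizes, not core positions.} The paper does not fix $\Lambda^{(n)}$ and bisect the range of admissible $s$; instead it interpolates between $\Lambda^{(n-1,+)}$ and $\Lambda^{(n)}$ through a sequence of rectangles $R^{(k)}$ of increasing width, and each step compares $\gamma^\o(R^{(k+1)})$ to $\gamma^\o(R^{(k)})$ by covering $R^{(k+1)}$ with two overlapping translates of $R^{(k)}$. In the language of Proposition~\ref{key:bisection}, the block updates resample all of $V_1\cup V_2$ (a translate of $R^{(k)}$) conditionally on $\SG^{\eta_3\cdot\o}(R^{(k)})$, and the cost of that block update is precisely $\gamma^\o(R^{(k)})$ by induction on $k$. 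Consequently the traversability strips are never analysed ``column by column via FA-$1$f''; they are absorbed into the super-good event on $R^{(k)}$. Your fixed-$s$ decomposition and strip-by-strip FA-$1$f argument are neither needed nor obviously correct (a strip of width $\ell_m-\ell_{m-1}$ is not a single FA-$1$f problem, and a naive iteration over columns would compound multiplicatively).

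\textbf{The single-step shuttle requires looking down to scale $n-2$.} The bisection above can only reduce the width to $\ell_{\lfloor (n-1)/2\rfloor}+1$, i.e.\ to $\Lambda^{(n-1,+)}$: once the two overlapping translates of $R^{(k)}$ are both of shape $\Lambda^{(n-1)}$, their intersection $V_2$ is too narrow to contain a $\Lambda^{(n-1)}$ core, so the events $\cF_{1,2},\cF_{2,3}$ of Proposition~\ref{key:bisection} become empty and the argument breaks. The paper handles this final step (Lemma~\ref{lem:lin}) by a genuinely different mechanism using Proposition~\ref{key:bisection2}: one looks \emph{two} scales down, placing a $\Lambda^{(n-2)}$ core inside $V_2$ via the ``shrunken super-good'' event $\overline{\SG}(V_2)$, and FA-$1$f (Proposition~\ref{prop:FA1f}) is invoked only on the single boundary column $V_3$. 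Your sentence about ``overlap between the cores at adjacent positions'' does not capture this; the cores at positions $s$ and $s+1$ overlap in a region of the wrong shape, and the fix is not a path argument but the $\overline{\SG}$ construction together with Lemma~\ref{lem:SGbar} controlling its conditional probability.
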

In particular, recalling \eqref{defn}, on the final scale this yields
\[\max_\o\g^\o(\L^{(2N)})
\leq \exp\big(O(\log^3(1/q))/\sqrt q\big).\]
\begin{rem} 
We stress an important difference in the definition of $\g^\o(\L^{(n)})$
w.r.t.\ a similar definition in \cite{Hartarsky21a}*{(12)}.
Indeed, in \eqref{eq:def:gammaR} the conditioning w.r.t.\ the
super-good event $\SG^\o(R)$ appears in the l.h.s.\ \emph{and} in the r.h.s.\ of the inequality, while in 
\cite{Hartarsky21a}*{(12)} the conditioning was absent in
the r.h.s. Keeping the conditioning also in the r.h.s.\ is a delicate and important point if one wants to get a
Poincar\'e constant which is \emph{sub-leading}
w.r.t.\ $\rd^{-1}$. Theorem 4.6 of \cite{Hartarsky21a} in the
context of FA-$2$f would give a Poincar\'e constant bounded from above by $\exp(\log(1/q)^3/q)$, much bigger than $\rd^{-1}$. 
\end{rem}

\subsection{Proof of Proposition \ref{thm:droplet}}
  \label{sec:relaxation}
The proof of the constrained Poincar\'e inequality of Proposition \ref{thm:droplet} is unfortunately rather long and technical but the main idea and technical ingredients can be explained as follows.

Given the recursive definition of the super-good event  $\SG^\o(\L^{(n)})$ it is quite natural to try to bound from above its relaxation time in progressively larger and larger volumes. A high-level ``dynamical intuition'' here goes as follows. After every time interval of length $\Theta(\g^{\mathbf 1}(\L^{(n-1)}))$ the core of $\L^{(n)}$, namely a  super-good translate of $\L^{(n-1)}$ inside $\L^{(n)}$, will equilibrate under the FA-$2$f dynamics. Therefore, the relaxation time of $\SG(\L^{(n)})$ should be at most $T^{(n)}_{\rm eff}\times \g^{\mathbf 1}(\L^{(n-1)})$, where $T^{(n)}_{\rm eff}$ is the time that it takes for the core to equilibrate its position inside $\L^{(n)}$, assuming that at each time the infections inside it are at equilibrium. The main step necessary to transform this rather vague idea into a proof is as follows.

In order to analyse the characteristic time scale of the effective dynamics of a core, we need to improve and expand a well established mathematical
technique for KCM to relate the relaxation times of two
$\o$-super-good regions on different scales. Such a technique
introduces various types
of \emph{auxiliary constrained block chains} and a large part of our
argument is devoted to proving good bounds
on their relaxation times (see Section \ref{sec:Poinc}). The main application of this technique to our concrete problem is summarised in Lemmas \ref{lem:bisectio} and \ref{lem:lin}
below which easily imply Proposition \ref{thm:droplet}. Let
\[\Lambda^{(n,+)}=
\begin{cases}
R(\ell_m+1,\ell_m)& \text{if $n=2m$,}\\    
R(\ell_{m+1},\ell_m+1) &\text{if $n=2m+1$}.
\end{cases}\]
The two key steps connecting the relaxation times of super-good rectangles of increasing length scale are as follows. 

\begin{lem}[From $\ell_{\lfloor n/2\rfloor}+1$ to $\ell_{\lfloor n/2\rfloor+1}$]
\label{lem:bisectio}
For all $0\le n\leq 2N-1$ 
\[\max_\o\g^\o(\L^{(n+1)})\leq \exp(O(\log^2(q)))\max_\o \g^\o(\L^{(n,+)}).\]
\end{lem}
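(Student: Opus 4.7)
The plan is to interpolate between $\Lambda^{(n,+)}$ and $\Lambda^{(n+1)}$ via a geometrically growing sequence of intermediate rectangles $R^{(0)}\subset\dots\subset R^{(K)}$ with $R^{(0)}=\Lambda^{(n,+)}$ and $R^{(K)}=\Lambda^{(n+1)}$, and at each step apply the auxiliary chain Poincar\'e inequality of Proposition \ref{key:bisection} to amplify the super-good relaxation time by only a factor polynomial in $1/q$.

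\textbf{Interpolation.} Treat the case $n=2m$, the case $n=2m+1$ being obtained by rotation. Here $\Lambda^{(n,+)} = R(\ell_m+1,\ell_m)$ and $\Lambda^{(n+1)} = R(\ell_{m+1},\ell_m)$ differ only in width, and $\ell_{m+1}-\ell_m \le q^{-O(1)}$. Set $R^{(k)} = R(\min(\ell_m + 2^k,\ell_{m+1}),\ell_m)$, so that $K=\lceil\log_2(\ell_{m+1}-\ell_m)\rceil = O(\log(1/q))$. Each $R^{(k)}$ is a class-$(n+1)$ rectangle; its super-good event, defined as in Definition \ref{def:supergood}, permits the class-$n$ core to slide along the widened direction.

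\textbf{Single bisection step.} To compare $\gamma^\omega(R^{(k)})$ with $\max_{\omega'}\gamma^{\omega'}(R^{(k-1)})$, cover $R^{(k)}$ by two overlapping shifted copies $R_L,R_R$ of $R^{(k-1)}$, and partition $R^{(k)}$ into three strips $\Lambda_1,\Lambda_2,\Lambda_3$ with $R_L = \Lambda_1\cup\Lambda_2$ and $R_R = \Lambda_2\cup\Lambda_3$; let $\omega_i$ denote the configuration on $\Lambda_i$. Exploiting the freedom of the core position, $\SG^\omega(R^{(k)}) = \cH\cup\cK$, where $\cH$ (resp.\ $\cK$) is the event that the core lies entirely in $R_L$ (resp.\ $R_R$). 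These fit the template of Proposition \ref{key:bisection}: $\cH$ factors as an $\omega$-right-traversability condition on the outer strip $\Lambda_3$ (playing the role of $\cA_3$) together with, conditionally on $\omega_3$, a super-good structure plus internal traversability for $(\omega_1,\omega_2)$ (playing the role of $\cB_{1,2}^{\omega_3}$), and symmetrically for $\cK$. Choose $\cF_{1,2}$ and $\cF_{2,3}$ to require super-good structure on $R_L$, $R_R$ respectively together with \emph{unconditional} traversability of the outer strip they touch. Inclusions \eqref{H1}, \eqref{H2} then hold and the auxiliary constant $\Ta^{(1)}$ reduces to ratios of traversability probabilities on strips of height $\ell_m$, which elementary estimates bound by $q^{-O(1)}$.

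\textbf{Passing to FA-$2$f and iterating.} Each term in the auxiliary Dirichlet form $\Da^{(1)}$ is a block variance of $f$ on $R_L$ or $R_R$ conditioned on the corresponding super-good event with some effective boundary $\omega'$; by the very definition \eqref{eq:def:gammaR}, each such variance is controlled by $\gamma^{\omega'}(R^{(k-1)})$ times the FA-$2$f Dirichlet form restricted to that block. Summing yields $\gamma^\omega(R^{(k)}) \le q^{-O(1)}\max_{\omega'}\gamma^{\omega'}(R^{(k-1)})$, and iterating over the $K=O(\log(1/q))$ scales produces the claimed factor $\exp(O(\log^2(1/q)))$. The principal obstacle will be the joint choice of $\cF_{1,2},\cF_{2,3}$: they must be strong enough to lie in $\cB_{1,2}^{\omega_3}\cap\cB_{2,3}^{\omega_1}$ uniformly in admissible $\omega_1,\omega_3$, while remaining sufficiently likely that the per-step overhead stays polynomial. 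Controlling the interaction between the $\omega$-traversability of the outer strips and the super-good structure inside the core-hosting sub-rectangle is where the real work lies.
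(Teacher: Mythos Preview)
Your proposal is correct and follows the same bisection strategy as the paper: geometrically interpolate between $\Lambda^{(n,+)}$ and $\Lambda^{(n+1)}$, at each step cover the larger rectangle by two overlapping copies of the smaller one, apply Proposition~\ref{key:bisection}, and control $\Ta^{(1)}$ via the ratio estimate of Corollary~\ref{cor:T:ratio}. The paper's argument differs only cosmetically (a $2/3$ shrinking factor rather than doubling, giving the same $O(\log(1/q))$ number of steps) and makes explicit what you correctly flag as the principal obstacle: $\cF_{1,2}$ is taken to be $\SG^{\mathbf 1_{V_3}\cdot\omega}_{s_k}(R_L)$, i.e.\ the super-good event with the $\Lambda^{(n)}$-core pinned to the leftmost position inside the overlap $\Lambda_2$ and with $\mathbf 1$-boundary on the $\Lambda_3$ side, which is precisely the choice needed to secure both inclusions \eqref{H1}--\eqref{H2}.
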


\begin{lem}[From $\ell_{\lfloor n/2\rfloor}$ to $\ell_{\lfloor n/2\rfloor}+1$]\label{lem:lin}
For all $0\le n\le 2N-1$
\[\max_\o \g^\o(\L^{(n,+)})\leq q^{-O(1)} \max_\o\g^\o(\L^{(n)}).\]
\end{lem}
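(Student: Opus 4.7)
Without loss of generality consider $n = 2m$ (the case $n = 2m+1$ is analogous by swapping rows and columns). The approach is to apply Proposition \ref{key:bisection2} with the three-block decomposition $\Lambda^{(n,+)} = A \cup B \cup C$, where $A$ and $C$ are the leftmost and rightmost columns of width one and $B$ is the middle block of width $\ell_m - 1$. Since $\Lambda^{(n,+)}$ is of class $n+1$, its super-good event admits two possible horizontal positions for the class-$n$ core, so that $\SG^\o(\Lambda^{(n,+)}) = \SG^\o_0 \cup \SG^\o_1$, where $\SG^\o_0 = \{A\cup B \text{ super-good}\} \cap \{C \text{ is } \o\text{-right-traversable}\}$ and $\SG^\o_1 = \{B\cup C \text{ super-good}\} \cap \{A \text{ is } \o\text{-left-traversable}\}$. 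Taking $\cC_{1,2}, \cC_{2,3}$ to be the super-good events and $\cA_1, \cA_3$ the traversability events gives $\cM \cup \cN = \SG^\o(\Lambda^{(n,+)})$.

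To bound $\Ta^{(2)}$ by $q^{-O(1)}$, I will choose $\hat\cC_{1,2} \subset \cC_{1,2}$ and a collection $(\cA_3^{\omega_1,\omega_2}) \subset \cA_3$ such that, on the joint event, the two super-good cores $A \cup B$ and $B \cup C$ share a common class-$(n-2)$ super-good sub-square located in $B$ (feasible since the sub-square has side $\ell_{m-1} < \ell_m - 1$) and the traversabilities of $A$ and $C$ hold. A direct computation based on the recursive super-good structure shows that each of these additional requirements has conditional probability $\ge q^{O(1)}$ under the unrestricted super-good event, so the ratios entering $\Ta^{(2)}$ are both at most $q^{-O(1)}$. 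For the Dirichlet form comparison, the variance terms $\var(f\mid\cC_{1,2},\omega_3)$ and $\var(f\mid\cC_{2,3},\omega_1)$ are controlled using the very definition of $\gamma^{\omega'}(\Lambda^{(n)})$, with the configuration on the opposite extremal column serving as the (worst-case) boundary; each such term contributes at most $\max_{\omega'}\gamma^{\omega'}(\Lambda^{(n)})$ times part of the FA-2f Dirichlet form on $\SG^\o(\Lambda^{(n,+)})$. The single-column variance terms $\var(f\mid\cA_3,\omega_1,\omega_2)$ and $\var(f\mid\cA_1,\omega_2,\omega_3)$ are bounded via Proposition \ref{prop:FA1f}\ref{item:FA1fboundary} at cost $q^{-O(1)}$, exploiting infections in the column of $B$ adjacent to the resampled single column to upgrade the FA-1f constraint to the full FA-2f one.

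The main obstacle is this last step, since the bound must hold uniformly in $\o$: the conditioning must systematically supply infections in the $B$-column adjacent to $A$ (resp.\ $C$) whenever $\cA_1$ (resp.\ $\cA_3$) is invoked. This is precisely the role of the common class-$(n-2)$ sub-square built into $\hat\cC_{1,2}$ and $\cA_3^{\omega_1,\omega_2}$: its recursive super-good structure forces infections sufficiently close to the boundary of $B$ to activate the 2-neighbor constraint at every site of the single column being resampled. Combining these bounds through Proposition \ref{key:bisection2} then yields $\gamma^\o(\Lambda^{(n,+)}) \le q^{-O(1)} \max_{\omega'}\gamma^{\omega'}(\Lambda^{(n)})$ for every $\o$, which is the claim.
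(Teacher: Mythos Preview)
Your overall architecture matches the paper's: the three-block decomposition $V_1\cup V_2\cup V_3$ into column/middle/column, the identification $\cM\cup\cN=\SG^\o(\Lambda^{(n,+)})$, the appeal to Proposition~\ref{key:bisection2}, and the idea of a ``shrunken'' super-good event on $V_2$ (the paper's $\overline{\SG}(V_2)$, Definition~\ref{def:SGbar}) to define $\hat\cC_{1,2}$ are all exactly what the paper does. The $\Ta^{(2)}\le q^{-O(1)}$ bound is correct and is Lemma~\ref{lem:SGbar}.

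However, your treatment of the single-column variance terms $\var(f\mid\cA_3,\eta_1,\eta_2)$ has a genuine gap. You write that the class-$(n-2)$ sub-square built into $\hat\cC_{1,2}$ and $\cA_3^{\eta_1,\eta_2}$ ``forces infections sufficiently close to the boundary of $B$ to activate the 2-neighbour constraint at every site of the single column.'' This cannot work: the events $\hat\cC_{1,2}$ and $\cA_3^{\eta_1,\eta_2}$ enter Proposition~\ref{key:bisection2} \emph{only} through the bound on $\Ta^{(2)}$; they do not appear in the Dirichlet form $\Da^{(2)}$. When you bound $\1_\cM\var(f\mid\cA_3,\eta_1,\eta_2)$, the pair $(\eta_1,\eta_2)$ ranges over all of $\cC_{1,2}=\SG(\Lambda^{(n)})$, not over $\hat\cC_{1,2}$, and $\SG(\Lambda^{(n)})$ does not force any \emph{specific} site in the rightmost column of $V_2$ to be infected. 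So after applying Proposition~\ref{prop:FA1f} to the column $V_3$, you have terms $\tilde c_x\var_x(f)$ with an FA-1f constraint internal to $V_3$, but no mechanism to guarantee $\eta_{x-\vec e_1}=0$ for the particular $x$ in question.

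The paper's fix is more subtle and costs an additional factor of $\gamma^{\mathbf 1}(\Lambda^{(n)})$, not merely $q^{-O(1)}$: for each $x\in V_3$ one applies Proposition~\ref{prop:2block} on $\{x\}\cup\Lambda^{(n)}$ with $\cH=\{\eta_{x-\vec e_1}=0\}$ to produce two terms, $\1_{\{\eta_{x-\vec e_1}=0\}}\var_x(f)$ (which, combined with $\tilde c_x$, yields the FA-2f constraint $c_x^{\Lambda^{(n,+)},\o}$) and $\var_{\Lambda^{(n)}}(f\mid\SG)$, which is then bounded by a \emph{second} use of $\gamma^{\mathbf 1}(\Lambda^{(n)})$; see \eqref{eq:lin:varxL}--\eqref{eq:cxtilde}. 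Two further points you omit: the paper must split into the cases $\o_{V_3+\vec e_1}=\mathbf 1$ and $\o_{V_3+\vec e_1}\neq\mathbf 1$, using Proposition~\ref{prop:FA1f}\ref{item:FA1ferg} in the first case (since then $\cA_3=\Omega_{V_3}\setminus\{\mathbf 1\}$) and \ref{item:FA1fboundary} in the second; and the base case $m=0$ needs a separate (trivial) argument.
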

\begin{proof}[Proof of Proposition \ref{thm:droplet}]
Lemmas \ref{lem:bisectio} and \ref{lem:lin} combined imply that 
\[
\max_\o\g^\o(\L^{(n+1)})\le \exp(O(\log(q)^2))\max_\o\g^\o(\L^{(n)}).
\]
Thus, Proposition \ref{thm:droplet} follows by induction over $n$. Indeed, $\g^\o(\L^{(0)})=1$ for all $\o\in\O_{\bbZ^2\setminus\L^{(0)}}$, since the l.h.s.\ of \eqref{eq:def:gammaR} is zero.
\end{proof}

Before proving Lemma \ref{lem:bisectio} formally, let us provide an informal description of the argument. We seek to apply a bisection technique (see \cites{Cancrini08,Hartarsky21b}) proceeding by a further induction. At each step of this bisection, we divide by two the difference of the widths (or heights) between our current rectangle (initially $\L^{(n+1)}$) and $\L^{(n,+)}$. In order to prove a recursive bound on the relaxation times $\g^\o$ of the intermediate rectangles of class $n+1$ arising in the process, we rely on Proposition \ref{key:bisection} as follows. 

We want to prove a Poincar\'e inequality on a larger rectangle, given such an inequality on a smaller one. We cover the larger one with two overlapping copies of the smaller one. We then use the relaxation in the smaller one to move the core of shape $\L^{(n)}$, witnessing it being super-good, to the intersection of the two translates. This makes the second copy super-good and allows us to resample it as well, thanks to the lower-scale Poincar\'e inequality. Thus, the events $\cF_{1,2}$ and $\cF_{2,3}$ in Proposition \ref{key:bisection} will roughly correspond to finding the core in the aforementioned overlap region (see Figure \ref{fig:partition1}).
\begin{proof}[Proof of Lemma \ref{lem:bisectio}]
Given $0\le n\le 2N-1$, let $K_n$ be the smallest integer $K>0$ such that $\lceil
(2/3)^{K} (\ell_{\lfloor n/2\rfloor+1}-\ell_{\lfloor n/2\rfloor})\rceil=1$ (if $K=0$, there is nothing to prove, since $\L^{(n,+)}=\L^{(n+1)}$). Equations \eqref{def:fn} and \eqref{defn} give
$\max_{n\le 2N-1}K_n \le O(\log(1/q))$. Consider the (exponentially increasing) sequence
\begin{equation}
d_k= \lceil
(2/3)^{K_n-k} (\ell_{\lfloor n/2\rfloor+1}-\ell_{\lfloor
n/2\rfloor})\rceil,\quad k\le K_n,
\end{equation}
and let $s_k=d_{k+1}-d_{k}$ for $k\leq K_n-1$. Next consider the collection $(R^{(k)})_{k=0}^{K_n}$ of rectangles of class $n+1$ interpolating between $\L^{(n,+)}$ and $\L^{(n+1)}$ defined by 
\[R^{(k)}=
\begin{cases}
R(\ell_m+d_k , \ell_{m}) & \text{if $n=2m$},\\
R( \ell_{m+1},\ell_m+d_k) & \text{if $ n=2m+1$.}
\end{cases}\]
By construction, $R^{(k)}\subset R^{(k+1)}$, $R^{(0)}=\Lambda^{(n,+)}$ and $R^{(K_n)}=\Lambda^{(n+1)}$. Finally, recall the events
$\SG^\o(R)$ and $\SG_s^\o(R)$ constructed in Definition \ref{def:supergood}
for any rectangle $R$ of class $n+1\le 2N$ and let
\begin{equation}
\label{eq:ak}  a_k= \max_{\o}\big( \mu_{R^{(k)}}(\SG^{\mathbf 1}_{s_k}\tc
       \SG^\o)\big)^{-2} \max_\o\big(\mu_{R^{(k)}}(\SG^\o_0 \tc \SG^\o)\big)^{-1},
\end{equation}
where $\max_\o$  is over  all $\o\in
\O_{\partial R^{(k)}}$.
In Corollary \ref{cor:T:ratio} we prove that
\begin{equation*}
\m_R\big(\SG_s^\o\tc\SG^{\o'}\big)\ge q^{O(1)}
\end{equation*}
\emph{uniformly} over all rectangles $R$ of class $n+1\le 2N$, all possible
values of the offset $s$ and all choices of the boundary configurations  $\o,\o'\in
\O_{\partial R}$. As a consequence
\begin{equation}
\label{eq:ak:bound}
  \max_{n\le 2N-1}\max_{k\le K_n} a_k\le (1/q)^{O(1)}.
\end{equation}
With the above notation the key inequality for proving Lemma \ref{lem:bisectio} is
\begin{equation}
  \label{eq:keybisection2}
 \max_{\o} \g^\o(R^{(k+1)})   \leq C
   a_{k}\times \max_{\o }\g^\o(R^{(k)}), \quad k\in [0,K_n-1],
\end{equation}
for some universal constant $C>0$.
Recalling that $R^{(0)}=\L^{(n,+)}$ and $R^{(K_n)}=\L^{(n+1)}$, from \eqref{eq:keybisection2} it follows that
\begin{equation}
\label{quote}
\max_\o \gamma^{\o}(\L^{(n+1)}) \leq \Big(C^{K_n}\prod_{k=0}^{K_n-1}a_k \Big)\times \max_\o
 \g^\o(\L^{(n,+)})
\end{equation}
which in turn implies Lemma
\ref{lem:bisectio} by \eqref{eq:ak:bound} and
$K_n \le O(\log(1/q))$.

The proof of \eqref{eq:keybisection2}, which is detailed for simplicity only in the even case $n=2m$, relies on the Poincar\'e inequality for a properly chosen \emph{auxiliary block chain} proved in Proposition \ref{key:bisection}. 
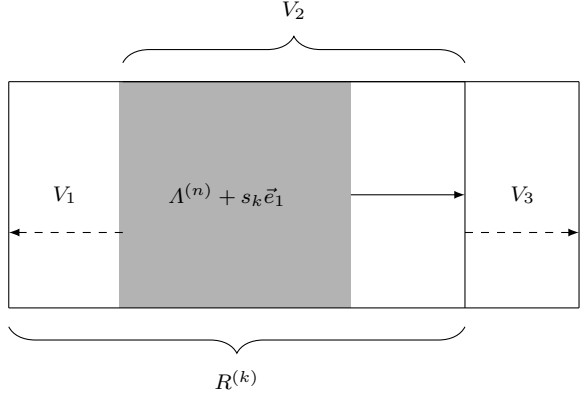
\begin{figure}\centering
\begin{tikzpicture}[>=latex,x=0.5cm,y=0.5cm]
\fill[opacity=0.3] (-0.1,6) -- (-0.1,0) -- (6,0) -- (6,6) -- cycle;
\draw (-3,0)-- (12,0);
\draw (-3,6)-- (12,6);
\draw (9,0)-- (9,6);
\draw (12,0)-- (12,6);
\draw (9,6)-- (0,6);
\draw (-3,6)-- (-3,0);
\draw [->] (6,3) -- (9,3);
\draw [->,dashed] (0,2) -- (-3,2);
\draw[anchor=east] (-1,3) node {$V_1$};
\draw[anchor=west] (10,3) node {$V_3$};
\draw[anchor=west] (1,3) node {$\L^{(n)}+s_k\vec e_1$};
\draw [decorate,decoration={brace,amplitude=10pt}]   (9,-0.5) -- (-3,-0.5) node [midway,yshift= - 0.7cm] {$R^{(k)}$};
\draw [decorate,decoration={brace,amplitude=10pt}] (0,6.5) -- (9,6.5) node [midway,yshift= 0.7cm] {$V_2$};
\draw [->,dashed] (9,2) -- (12,2);
\end{tikzpicture}
\caption{
\label{fig:partition1}
The partition of $R^{(k+1)}$ into the rectangles $V_1,V_2,V_3$. Here we
illustrate the event $\cF_{1,2}\cap\cA_3$. The grey region
$\Lambda^{(n)}+s_k\vec e_1$ at the left boundary of $V_2$  is $\SG$ and the \emph{dashed}
arrows in $V_1$ and $V_3$ indicate their $\o$-traversability. The \emph{solid} arrow in $V_2\setminus (\Lambda^{(n)}+s_k\vec e_1)$
indicates instead the $\mathbf
1$-traversability of $V_2\setminus (\Lambda^{(n)}+s_k\vec e_1)$.
Clearly the entire configuration belongs to the events $\cH$ and $\cK$
defined in \eqref{E1}, \eqref{E2}. Indeed, the two ($\o$-)right-traversability events together imply the $\o$-right-traversability of $(V_2\cup V_3)\setminus (\L^{(n)}+s_k\vec e_1)$.}
\end{figure}
In order to exploit that proposition we partition $R^{(k+1)}$ into three disjoint rectangles $V_1$, $V_2$, $V_3$ as follows (see Figure \ref{fig:partition1}):  
 \begin{align*}
   V_1&=R(s_k,\ell_m),&V_2&=R^{(k)}\setminus V_1,&V_3&=R^{(k+1)}\setminus R^{(k)}.
\end{align*}
Then, given a boundary configuration $\o\in \O_{\partial R^{(k+1)}}$, let
  \begin{align}
    \label{E1}
    \cH&=\{\eta\in\O_R^{(k+1)}: \eta_{3}\in \cT^\o_{\rightarrow}(V_3) \text{ and }
    \eta_1\cdot \eta_2\in \SG^{\eta_3\cdot\omega}(V_1\cup V_2)\},\\
\label{E2} \cK&=\{ \eta\in\O_R^{(k+1)}:\eta_{1}\in \cT^\o_{\leftarrow}(V_1) \text{ and }
       \eta_{2}\cdot\eta_{3}\in \SG^{\eta_1\cdot\omega}(V_2\cup
     V_3)\},
  \end{align}
where $\eta_i:=\eta_{V_i}$. In words, $\cH$ requires that $V_3$ is $\o$-right-traversable and $R^{(k)}=V_1\cup V_2$ is $\o\cdot \eta_{R^{(k+1)}\setminus R^{(k)}}$-super good and similarly for $\cK$. Notice that $\cH\cup \cK=\SG^\o(R^{(k+1)})$. Indeed, the width of $V_2$ is in fact $\ell_m+2d_{k}-d_{k+1}\ge\ell_m$ and therefore 
any configuration in $\SG^\o(R^{k+1})$ necessarily contains a
super-good core in either $V_1\cup V_2$ or $V_2\cup V_3$.

We next introduce two additional events (see Figure \ref{fig:partition1})
\begin{align}
  \cF_{1,2}&{}=\SG_{s_k}^{{\mathbf 1}_{V_3}\cdot \o}(V_1\cup V_2)&\cF_{2,3}&{}=\SG_{0}^{\mathbf 1_{V_1}\cdot \o}(V_2\cup V_3).\label{eq:def:F}
\end{align}
In
words, $\cF_{1,2}$ ($\cF_{2,3}$) consists of super-good configurations in $V_1\cup V_2$ ($V_2\cup V_3$) with a super-good core of type $\L^{(n)}$ inside $V_2$ in the \emph{leftmost} possible position. Monotonicity in the boundary condition easily implies that 
\[\{\eta: \eta_{3}\in \cT^\o_{\rightarrow}(V_3) \text{ and }\eta_1\cdot \eta_2\in \cF_{1,2}\}\subset \cH\cap \cK,\]
and similarly for $\cF_{2,3}$ (see Figure \ref{fig:partition1}).

We can now apply Proposition \ref{key:bisection} with
parameters $\O_i=\O_{V_i}$ for $i\in\{1,2,3\}$, $\cA_1=\cT^\o_\leftarrow(V_1)$, $\cA_3=\cT^\o_\rightarrow(V_3)$, $\cB_{1,2}^{\eta_3}=\SG^{\eta_3\cdot\o}(V_1\cup V_2)$, $\cB_{2,3}^{\eta_1}=\SG^{\eta_1\cdot\o}(V_2\cup V_3)$ and $\cF_{1,2},\cF_{2,3}$ from \eqref{eq:def:F}. We claim that
\begin{align*}
\Ta^{(1)}:={}&  \max_{\substack{\eta_{1}\in\cT_{\leftarrow}^\o(V_1)\\\eta_{3}\in \cT^\o_\rightarrow(V_3)}}\Big(\frac{\mu_{R^{(k+1)}}\big(\SG^{\eta_3\cdot\o}(V_1\cup V_2) \big)}{\mu_{R^{(k+1)}}\big(\cF_{1,2}\big)}\Big)^2
\times\frac{\mu_{R^{(k+1)}}\big(\SG^{\eta_1\cdot\o}(V_2\cup V_3) \big)}{\mu_{R^{(k+1)}}\big(\cF_{2,3}\big)}\\
={}&\max_{\eta_1,\eta_3}\m^{-2}_{R^{(k)}}\big(\SG_{s_k}^{\bone_{V_3}\cdot\o}\tc \SG^{\eta_3\cdot\o}\big)\m^{-1}_{s_k\vec e_1+R^{(k)}}\big(\SG_0^{\bone_{V_1}\cdot\o}\tc\SG^{\eta_1\cdot\o}\big)\le a_k.
\end{align*}
Indeed, the second equality follows from \eqref{eq:def:F} together with the fact that $V_1\cup V_2= R^{(k)}$ and $V_2\cup V_3=R^{(k)}+s_k$, while the inequality follows from \eqref{eq:ak}. For the inequality it suffices to use monotonicity in the boundary condition for the first term and observe that $\SG^{\eta_1\cdot\o}_0(s_k\vec e_1+R^{(k)})$ does not depend on $\eta_1$ for the second one. Thus, Proposition \ref{key:bisection} yields
\begin{multline}
\label{impo}
\var_{R^{(k+1)}}\big(f \tc \SG^\o\big)=\var_{R^{(k+1)}}\big(f \tc \cH\cup
 \cK\big)\le c a_k\\
\times\mu_{R^{(k+1)}}\big(\1_{\cH}\var_{R^{(k+1)}}(f\tc
   \cH,\eta_3) +\1_{\cK}\var_{R^{(k+1)}}(f\tc \cK,\eta_1)\tc \cH\cup \cK \big),
\end{multline}
for some universal constant $c>0$.

In order to conclude the proof of  \eqref{eq:keybisection2}
we are left with the analysis of the average
w.r.t.\ $\mu_{R^{(k+1)}}(\cdot\tc \cH\cup \cK)$ in the r.h.s.\ of
\eqref{impo}. Recalling \eqref{eq:def:gammaR} and \eqref{E1},
for any $\eta_3\in \cT^\o_{\rightarrow}(V_3)$ we get
\begin{multline}
  \label{upperV}
\var_{R^{(k+1)}}(f\tc \cH,\eta_3)\\
\begin{aligned}[t]={}&\var_{R^{(k+1)}}(f\tc\eta_3,\SG^{\eta_3\cdot\o}(R^{(k)}))\\
\leq{}&\max_{\omega'\in{\O_{\bbZ^2\setminus       R^{(k)}}}}\gamma^{\o'}(R^{(k)})\times\sum_{x\in R^{(k)}}\mu_{R^{(k)}}\big(c^{R^{(k)},\eta_3\cdot\o}_x\var_x(f)\tc\SG^{\eta\cdot\omega}\big).
\end{aligned}
\end{multline}
An analogous inequality holds for $\var(f\tc \cK,\eta_1)$ when
$\eta_1\in \cT^\o_{\leftarrow}(V_1)$.
Finally, we observe that for any $x\in R^{(k)}$
\begin{multline}
\label{eq:bisection:final}\mu_{R^{(k+1)}}\big(\1_{\cH}\mu_{R^{(k)}}\big(c_x^{R^{(k)},\eta_3\cdot\o}\var_x(f)\tc \SG^{\eta_3\cdot\omega}(R^{(k)})\big)\tc \SG^\o(R^{(k+1)})\big)\\
\begin{aligned}={}&\frac{\mu_{V_3}\big(\1_{\cA_3}\m_{R^{(k)}}\big(\1_{\SG^{\eta_3\cdot\omega}(R^{(k)})}\mu_{R^{(k)}}\big(c_x^{R^{(k)},\eta_3\cdot\o}\var_x(f)\tc \SG^{\eta_3\cdot\omega}(R^{(k)})\big)\big)\big)}{\m_{R^{(k+1)}}(\SG^\o\big)}\\
={}&\mu_{R^{(k+1)}}\big( \1_{\cH}c_x^{R^{(k+1)},\o}\var_x(f) \tc \SG^\o(R^{(k+1)})\big),
\end{aligned}
\end{multline}
since $\1_\cH=\1_{\cA_3}\1_{\SG^{\eta_3\cdot\omega}(R^{(k)})}\le\1_{\SG^\o(R^{(k+1)})}$ by \eqref{E1} and $\m_{R^{(k+1)}}=\m_{R^{(k)}}\otimes\m_{V_3}$. A similar result relation holds for $\cK$. Inserting \eqref{upperV} and \eqref{eq:bisection:final} into \eqref{impo}, we get
\begin{multline*}
\var_{R^{(k+1)}}(f \tc SG^\o) \leq
O(a_k) \times
\max_{\omega'}\gamma^{\o'}(R^{(k)}) \\
\times\sum_{x\in R^{(k+1)}}\mu_{R^{(k+1)}}\big(c_x^{R^{(k+1)},\o}\var_x(f)\tc SG^\o\big),
\end{multline*}
which proves \eqref{eq:keybisection2} in view of \eqref{eq:def:gammaR}.
\end{proof}

The proof of Lemma \ref{lem:lin} is similar to that of Lemma \ref{lem:bisectio}, but in this case we plan to use Proposition \ref{key:bisection2} instead of Proposition \ref{key:bisection}. The reason why the same proof does not apply is that the intersection of two distinct copies of $\L^{(n)}$ is never large enough to contain another copy of $\L^{(n)}$. Therefore, we are forced to look inside the $\L^{(n)}$ core in order to shrink it by one line (see Figure \ref{fig:overlineSG}). Namely, we will position the core of type $\L^{(n-2)}$ so that it is in the middle region corresponding to $V_2$ in the previous proof. We will then ask for events stronger than traversability on $V_1$ and $V_3$ in order to fit the structure in $V_2$ (see Figure \ref{fig:overlineSG}).
\begin{proof}[Proof of Lemma \ref{lem:lin}]
Once again, we provide the details only in the case $n=2m$. Let us start with the case $m=0$. Firstly, $\g^\o(\L^{(0)})=1$ for all $\o$ by the definition \eqref{eq:def:gammaR}, as $\var_{\L^{(0)}}(f\tc\SG^\o)=0$ for all $f$ and $\o$. Moreover, $\SG^\o(\L^{(0,+)})\subset\O_{\L^{(0,+)}}$ has 1, 2 or 3 elements (depending on $\o$). If this space has a single point, $\g^\o(\L^{(0,+)}=1$ as for $\L^{(0)}$ and we are done. Otherwise, we are dealing with an irreducible reversible Markov process on at most 3 states and transition rates bounded from below by $q$, so $\max_{\o}\g^{\o}(\L^{(0,+)})=q^{O(1)}$.

Let $m\geq 1$. We begin by writing
$\Lambda^{(n,+)}=R(\ell_m+1,\ell_m)=V_1\cup V_2\cup V_3$, where $V_1$ denotes 
the leftmost column, $V_3$ the rightmost column and $V_2$ all the
remaining columns (see Figure \ref{fig:overlineSG}). By construction $V_1\cup V_2$ and $V_2\cup V_3$ are translates of $\L^{(n)}$.
Then, for any given $\omega\in\Omega_{\partial  \Lambda^{(n,+)}}$,
we introduce the events
\begin{align*}
  \cM &{}=\cT^\o_{\rightarrow}(V_3)\cap \SG(V_1\cup V_2)=\SG^\o_0(\L^{(n,+)})\\
  \cN&={}\cT^\o_{\leftarrow}(V_1)\cap \SG(V_2\cup V_3)=\SG^\o_1(\L^{(n,+)})
\end{align*}
and observe that $\SG^\o(\Lambda^{(n,+)})=\cM\cup \cN$, since the only possible values of the offset $s$ in Definition \ref{def:supergood} in our case are $0$ and $1$.
In order to be able to use Proposition \ref{key:bisection2} we need
some further events. The first one is the event $\overline{\SG}(V_2)$ which is best explained by
Figure \ref{fig:overlineSG}.
It corresponds to requiring 
that inside the rectangle $V_2=R(\ell_m-1,\ell_m)+\vec e_1$ there
exists a $\mathbf 1$-super-good square $R(\ell_{m-1},\ell_{m-1})+x$ and
the remaining rectangles in $V_2\setminus R(\ell_{m-1},\ell_{m-1})+x$ which sandwich $R(\ell_{m-1},\ell_{m-1})+x$ are $\mathbf 1$-traversable. The formal definition is as follows.

\begin{defn}[Shrunken super-good]
\label{def:SGbar}
Let $R=R(\ell_m-1,\ell_m)=V_2-\vec e_1$. We say that $\overline\SG(R)$ occurs if there exist integers $0\le s_1\le \ell_m-\ell_{m-1}-1$ and $0\le s_2\le \ell_m-\ell_{m-1}$ such that the intersection  of the following events, in the sequel $\overline\SG_{s_1,s_2}(R),$ occurs (see Figure \ref{fig:overlineSG})
\begin{align*}
&\SG(\L^{(n-2)}+s_1\vec e_1+s_2\vec e_2);\\
&\cT_{\leftarrow}(R(s_1,\ell_{m-1})+s_2\vec e_2);\\
& \cT_{\rightarrow}(R(\ell_m-\ell_{m-1}-1-s_1,\ell_{m-1})+(s_1+\ell_{m-1})\vec e_1+s_2\vec e_2);\\
&\cT_{\downarrow}(R(\ell_{m}-1,s_2));\\
&\cT_{\uparrow}(R(\ell_{m}-1,\ell_m-\ell_{m-1}-s_2)+(s_2+\ell_{m-1})\vec e_2).\end{align*}
The event $\overline\SG(V_2)$ is defined by translation of $\overline\SG(R)$. Then for any $\eta_{2}\in\overline\SG(V_2)$, the segments $I_1$ and $I_3$ are given by
\begin{align*}
I_1(\eta_{2})&{}=R(1,\ell_{m-1})+s_2(\eta_{2})\vec e_2\subset V_1=R(1,\ell_m),\\
I_3(\eta_{2})&{}=R(1,\ell_{m-1})+s_2(\eta_{2})\vec e_2+\ell_m\vec e_1\subset V_3=V_1+\ell_m\vec e_1,\end{align*}
where $s_2(\eta_2)$ is an arbitrary one of the choices of $s_2$ such that $\eta_2\in\overline\SG_{s_1,s_2}(R)$ for some $s_1$.
\end{defn}

As before, let $\eta_i:=\eta_{V_i}$. Recalling Definition \ref{def:supergood} and Figures \ref{fig:super-good} and \ref{fig:overlineSG}, it is not hard to check that
\begin{equation}\label{overlineF1}
\hat\cC_{1,2}:=\big\{\eta\in\O_{V_1\cup V_2}: \eta_2\in\overline\SG(V_2),
  \eta_{I_1(\eta_2)}\neq \mathbf 1\big\}\subset\SG(V_1\cup V_2),
\end{equation}
since $I_1$ extends the horizontal traversability, while the vertical one and the core of type $\L^{(n-2)}$ are witnessed by $\overline\SG(V_2)$. For $\eta\in \hat\cC_{1,2}$ we set
\begin{equation}\label{eq:def:A3}
\cA_3^{\eta_1\cdot\eta_2}=\{\eta_{I_3(\eta_2)}\neq \mathbf 1\}.
\end{equation}
By \eqref{overlineF1} and its analogue for $I_3$ we have
\[
  \big\{\eta\in\O_{\L^{(n,+)}}:\ \eta_1\cdot\eta_2\in \hat\cC_{1,2} \text{ and } \eta_3\in
  \cA_3^{\eta_1\cdot\eta_2}\big\}\subset \cM\cap \cN.
\]

We can finally apply Proposition \ref{key:bisection2} with parameters $\O_i=\O_{V_i}$ for $i\in\{1,2,3\}$, $\cC_{1,2}=\SG(V_1\cup V_2)$, $\cC_{2,3}=\SG(V_2\cup V_3)$, $\cA_1=\cT^\o_{\leftarrow}(V_1)$, $\cA_3=\cT^\o_{\rightarrow}(V_3)$ and $\hat\cC_{1,2}$ and $\cA_3^{\eta_1\cdot \eta_2}$ as above. Set
\begin{equation}
\label{eq:def:Ta2}
\Ta^{(2)}= \max_{\eta\in \hat\cC_{1,2}}\frac{\mu_{\L^{(n,+)}}(\cA_3)}{\mu_{\L^{(n,+)}}(\cA_3^{\eta_1,\eta_2})} \times \frac{\mu_{\L^{(n,+)}}( \cC_{1,2})}{\mu_{\L^{(n,+)}} (\hat\cC_{1,2})}.
\end{equation}
Then Proposition \ref{key:bisection2} gives that for some $c>0$ we have
  \begin{multline}
\var_{\L^{(n,+)}}(f\tc\SG^\o)=\var_{\L^{(n,+)}}(f\tc \cM\cup \cN)\\
\le c \Ta^{(2)}\times \mu_{\L^{(n,+)}}\Big(\1_{ \cM}\var_{\L^{(n,+)}}(f\tc \cC_{1,2},\eta_3)+\1_{ \cN}\var_{\L^{(n,+)}}(f\tc \cC_{2,3},\eta_1)\\
  +\1_{ \cM}\var_{\L^{(n,+)}}(f\tc \cA_3,\eta_1,\eta_2) + \1_{ \cN}\var_{\L^{(n,+)}}(f\tc \cA_1,\eta_2,\eta_3)\tc \cM\cup \cN\Big).
   \label{impo3}
 \end{multline}
By \eqref{eq:def:A3}, $\min_{\eta\in
  \hat\cC_{1,2}}\mu_{\L^{(n,+)}}(\cA_3^{\eta_1\cdot\eta_2})\ge q$.
Furthermore, in Lemma \ref{lem:SGbar} we will establish that
$\mu_{V_1\cup V_2}(\hat\cC_{1,2}\tc \cC_{1,2})\ge
q^{O(1)}$. Combining these observations with \eqref{eq:def:Ta2}, we get
\begin{equation}
  \label{ubound2}\Ta^{(2)}\leq q^{-O(1)}. 
\end{equation}

\begin{figure}\centering
\begin{tikzpicture}[>=latex,x=0.5cm,y=0.5cm]
\draw [fill=black,fill opacity=0.5] (3,3) rectangle (8,8);
\draw [line width=1pt] (-1,0) rectangle (10,10); 
\draw [pattern=north east lines, opacity=0.7] (0,3) rectangle (3,8);
\draw [->,thick] (3,5.5) -- (0,5.5);
\draw [pattern=north east lines, opacity=0.7] (8,3) rectangle (9,8);
\draw [->,thick] (8,5.5) -- (9,5.5);

\draw [pattern=dots, opacity=0.5] (0,8) rectangle (9,10);
\draw [->,thick] (5.5,8) -- (5.5,10);
\draw [pattern=dots, opacity=0.5] (0,0) rectangle (9,3);
\draw [->,thick] (5.5,3) -- (5.5,0);

\draw[anchor=east] (-1,2) node {$V_1$};
\draw [decorate,decoration={brace,amplitude=10pt}] (0,10.2) -- (9,10.2) node [midway,yshift=0.6cm] {$ V_2$};
\draw [decorate,decoration={brace,amplitude=10pt}] (10.2,8) --
(10.2,3) node [midway,xshift=0.6cm] {$ I_3$};
\draw [decorate,decoration={brace,amplitude=10pt}] (-1.2,3) -- (-1.2,8) node [midway,xshift=-0.6cm] {$ I_1$};
\draw[anchor=west] (10,2) node {$V_3$};
\end{tikzpicture}
\caption{
\label{fig:overlineSG}
The partition of $\Lambda^{(n,+)}$ into the rectangle $V_2$ and the two columns $V_1$ and $V_3$. Here we illustrate the event $\overline{\SG}(V_2)$: the grey region is a super-good rectangle of the type $\L^{(n-2)}$, while the patterned rectangles are $\mathbf 1$-traversable in the arrow directions.  If there is at least one infection in $I_3$ then the rectangle $V_2\cup V_3$ is super-good. Similarly, an infection in $I_1$ suffices to make $V_1\cup V_2$ super good.}
\end{figure}
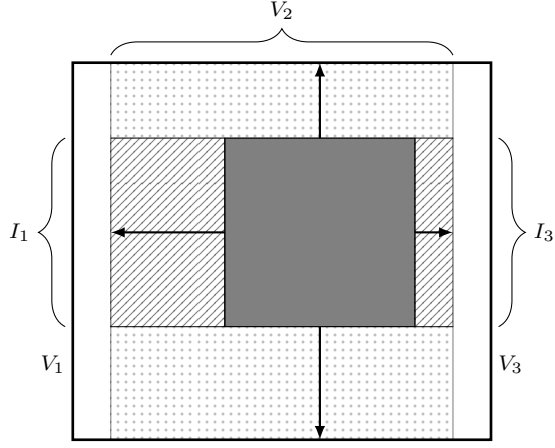
We now turn to examine the four averages w.r.t.\ $\mu_{\L^{(n,+)}}(\cdot \tc \cM\cup \cN)$ appearing in the r.h.s.\ of \eqref{impo3}. Recall that
$\cM\cup \cN=\SG^\o(\L^{(n,+)} )$. Proceeding as for the r.h.s.\ of \eqref{impo}, we obtain that
\begin{multline}
\mu_{\L^{(n,+)}}\big(\1_{ \cM}\var_{\L^{(n,+)}}(f\tc \cC_{1,2},\eta_3)+\1_{ \cN}\var_{\L^{(n,+)}}(f\tc \cC_{2,3},\eta_1)\tc\cM\cup\cN\big)\\
\le O(\gamma^{\bone}(\L^{(n)}))\times \sum_{x\in \L^{(n,+)}}\mu_{\Lambda^{(n,+)}}\big(c_x^{\L^{(n,+)},\bone}\var_x(f)\tc \SG^\o\big).
\label{eq:12}
\end{multline}
Indeed, the only difference is that $\cC_{1,2}=\SG(\L^{(n)})$, so that we recover a $\bone$ boundary condition, and we use that $c_x^{\L^{(n)},\bone}\le c_x^{\L^{(n,+)},\bone}$ and similarly for $V_2\cup V_3$ instead of $\L^{(n)}$. We will now explain how to upper bound the third average in \eqref{impo3},
\begin{equation}\label{eq:third average1}
  \mu_{\L^{(n,+)}}\Big(\1_{ \cM}\var_{\L^{(n,+)}}(f\tc \cA_3,\eta_1,\eta_2) \tc \cM\cup \cN\Big),
\end{equation} the fourth one being similar.
We need to distinguish
two cases, according to whether the boundary condition $\o$ has an
infection on the column $V_3+\vec e_1$ or not.
\paragraph{Assume $\o_{V_3+\vec e_1}=\mathbf 1$.} In this case $\cA_3=\cT_{\rightarrow}(V_3)=\O_{V_3}\setminus\{\mathbf 1\}$ and 
Proposition \ref{prop:FA1f}\ref{item:FA1ferg}, gives that
\begin{align}
\var_{\L^{(n,+)}}(f\tc \cA_3,\eta_1,\eta_2)&{}=  \var_{V_3}(f\tc
\cT_{\rightarrow}(V_3))\nonumber\\
&{}\leq q^{-O(1)}\sum_{x\in
    V_3}\mu_{V_3}\big(\tilde c_x\var_x(f)\tc \cT_{\rightarrow}(V_3)\big),
  \label{condition}
\end{align}
with $\tilde c_x(\eta)=1$ if $x$ has at least one infected neighbour
inside $V_3$ and $\tilde c_x(\eta)=0$ otherwise. For $x\in V_3$ let
\begin{equation}
\label{eq:def:Ax}
  A_x= \mu_{\L^{(n,+)}}\Big(\1_{ \cM}\ \mu_{V_3}\big(\tilde
  c_x\var_x(f)\tc\cT_{\rightarrow}(V_3)\big)\tc \cM\cup \cN\Big).
\end{equation}
  
Recall that $\SG^\o(\L^{(n,+)})=\cM\cup\cN\supset\cM=\SG(\L^{(n)})\cap\cT_{\rightarrow}(V_3)$ and $\m_{\L^{(n,+)}}=\m_{\L^{(n)}}\otimes\m_{V_3}$. Then we have
\begin{align*}
A_x={}&\frac{\m_{\L^{(n,+)}}(\cM)}{\m_{\L^{(n,+)}}(\SG^\o)}\m_{\L^{(n,+)}}\big(\tilde c_x\var_x(f)\tc\cM\big)\\
={}&\frac{\m_{\L^{(n,+)}}(\cM)}{\m_{\L^{(n,+)}}(\SG^\o)}
\m_{\L^{(n,+)}}\big(\tilde c_x\m_{\L^{(n)}}\big(\var_x(f)\tc\SG(\L^{(n)})\big)\tc\cM\big)\\
\le{}&q^{-1}\m_{V_3}\big(\m_{\L^{(n)}}\big(\var_x(f)\tc\SG(\L^{(n)})\big)\tc \tilde c_x=1\big),
\end{align*}
the inequality using $\cM\subset\SG^\o(\L^{(n,+)})$, the fact that $\tilde c_x=1$ implies $\cT_\rightarrow (V_3)$ and $\m(\tilde c_x\tc \cT_\rightarrow(V_3))\ge q$ (here we use that $V_3$ is not a singleton, which follows from $m\ge 1$). Then, by the law of total variance, we get
\begin{equation}
\label{eq:Ax:bound}
A_x \le q^{-1}\mu_{V_3}\big(\var_{\{x\}\cup \L^{(n)}}(f\tc \SG( \L^{(n)}))\tc \tilde c_x=1\big).
\end{equation}
Next, we use Proposition \ref{prop:2block} with parameters $\bbP=\m_{\{x\}\cup\L^{(n)}}(\cdot\tc\SG(\L^{(n)}))$, $X_1=\eta_{\L^{(n)}}$, $X_2=\eta_{x}$, $\cH=\{\h\in\O_{\L^{(n)}}:\h_{x-\vec e_1}=1\}$, in order to write
\begin{multline}
\label{eq:lin:varxL}\var_{\{x\}\cup  \L^{(n)}}(f \tc \SG( \L^{(n)})) \leq
\frac{2}{q}\m_{\{x\}\cup\L^{(n)}}\big(\var_{ \L^{(n)}}(f \tc \SG( \L^{(n)}))\\
+\1_{\{\eta_{x-\vec e_1}=0\}}\var_x(f)\tc\SG(\L^{(n)})\big).
\end{multline}
Recalling \eqref{eq:def:gammaR}, we get
\begin{align}
\nonumber\var_{\L^{(n)}}(f \tc \SG)\le{}&
\gamma^{\mathbf 1}(\L^{(n)})\sum_{y\in \L^{(n)}}\mu_{\L^{(n)}}\big(c_y^{\L^{(n)},\mathbf 1}\var_y(f)\tc \SG
\big)\\
\le{}&\g^{\mathbf 1}(\L^{(n)})\sum_{y\in \L^{(n)}}\mu_{\L^{(n)}}\big(c_y^{\L^{(n,+),\o}}\var_y(f)\tc \SG\big),
\label{eq:varLn}\end{align}
because $c_y^{\L^{(n)},\mathbf 1}\leq c_y^{\Lambda^{(n,+)},\o}$ for any $y\in\L^{(n)}$ and $\o\in\O_{\bbZ^2\setminus \L^{(n,+)}}$. Finally,
observe that $\1_{\{\eta_{x-\vec e_1}=0\}}\tilde c_x\leq c_x^{\Lambda^{(n,+)},\o}$, because if
$x\in V_3$ has an infected neighbour in $V_3$ (the constraint $\tilde
c_x$) and $x-\vec e_1\in V_2$ is also infected, then $x$ has two infected
neighbours in  $\Lambda^{(n,+)}$. Thus, putting \eqref{eq:lin:varxL} and \eqref{eq:varLn} together, we obtain
\begin{multline}
\label{eq:cxtilde}
\tilde c_x\var_{\{x\}\cup\L^{(n)}}(f\tc\SG(\L^{(n)}))\le\frac{2}{q}\g^\bone(\L^{(n)})\times\\
\sum_{y\in\{x\}\cup\L^{(n)}}\m_{\{x\}\cup\L^{(n)}}\big(c_y^{\L^{(n,+)},\o}\var_y(f)\tc\SG(\L^{(n)})\big).
\end{multline}
Combining \eqref{condition}-\eqref{eq:Ax:bound} and \eqref{eq:cxtilde}, yields 
\begin{multline}
  \mu_{\L^{(n,+)}}\Big(\1_{ \cM}\var(f\tc \cA_3,\eta_1,\eta_2) \tc
    \cM\cup \cN\Big) \le q^{-O(1)}\sum_{x\in V_3} A_x\\
    \le \frac{\g^{\mathbf 1}(\L^{(n)})}{q^{O(1)}}\sum_{x\in V_3}\sum_{y\in \Lambda^{(n,+)}}
    \mu_{\L^{(n,+)}}\Big(c_y^{\Lambda^{(n,+)},\o}\var_y(f)\tc \SG(\Lambda^{(n)}),\tilde c_x=1\Big).
\label{eq:third average} \end{multline}
Moveover, $\SG(\L^{(n)})$ and $\tilde c_x=1$ imply $\SG^\o(\L^{(n,+)})$ and 
\[\frac{\m_{\L^{(n,+)}}(\SG^\o)}{\m_{\L^{(n)}}(\SG)\m_{V_3}(\tilde c_x)}\le\frac{2}{\m_{V_3}(\tilde c_x)}\le 2/q,\]
since there are only two possible positions for the core of type $\L^{(n)}$ of $\SG^\o(\L^{(n,+)})$. Thus, \eqref{eq:third average} is at most
\[\frac{\g^{\mathbf 1}(\L^{(n)})|V_3|}{q^{O(1)}}\sum_{y\in \Lambda^{(n,+)}}\mu_{\L^{(n,+)}}\Big(c_y^{\Lambda^{(n,+)},\o}\var_y(f)\tc \SG^\o(\Lambda^{(n,+)})\Big).\]
Moreover, $|V_3|=\ell_m\le \ell_N=q^{-O(1)}$ by \eqref{defellN}, so, recalling \eqref{eq:def:gammaR}, we are done with the case $\o_{V_3+\vec e_1}=\mathbf 1$.

\paragraph{Assume $\o_{V_3+\vec e_1}\neq \mathbf 1$.}
In this case 
$\cT^\o_{\rightarrow}(V_3)=\Omega_{V_3}$, so that $\var_{V_3}(f\tc
\cT^\o_{\rightarrow}(V_3))=\var_{V_3}(f)$.
The proof is then identical to the previous one except for inequality
\eqref{condition} which now follows from Proposition \ref{prop:FA1f}\ref{item:FA1fboundary} with the unconstrained site $z\in V_3$ chosen arbitrarily so that $\o_{z+\vec e_1}=0$.
\end{proof}

\section{Proof of Theorem \ref{th:FA2f}: upper bound}
\label{sec:upperbound}
As already announced we will only discuss the two dimensional case. The starting point is
as in \cite{Hartarsky21a}*{Section 5}. Let $\kappa$ be a large enough constant, let
\begin{equation}
\label{eq:def:tstar}
t_*= \exp\Big(
  \frac{\pi^2}{9q}\big(1+\k\sqrt{q}\log^{3}(1/q)\big)\Big)
\end{equation}
and let $T=\lfloor\exp(\log^4(1/q)/q)\rfloor$. Then
\begin{align*}
\bbE_\mu(\t_0)={}&\int_0^{+\infty}ds\,\bbP_\mu(\t_0>s)\\
={}&\int_{0}^{t_*}ds\,\bbP_\mu(\t_0>s) + \int_{t_*}^T ds\,\bbP_\mu(\t_0>s)+\int_T^{+\infty} ds \,\bbP_\mu(\t_0>s) \\
\le{}& t_* + T\bbP_\mu(\t_0>t_*)  +\int_T^{+\infty} ds \,\bbP_\mu(\t_0>s).
\end{align*}
The term $t_*$ has exactly the form required in \eqref{eq:FA2f:upper}. In order to bound the last term in the r.h.s.\ above, we use that
$\bbP_\mu(\t_0>s)\le e^{-s q/\trel}$ for all $s>0$  (see \emph{e.g.}\ \cite{Cancrini09}*{proof of Theorem 4.7}) together with $\trel\le e^{O(|\log(q)|^3/q)}$ (see \cite{Martinelli19a}*{Theorem 2 (b)}) to get that
\[
\lim_{q\to 0} 
\int_T^{+\infty} ds \,\bbP_\mu(\t_0>s)\le \lim_{q\to 0}
\frac{\trel}{q}e^{-q T/\trel}=0.
\]
In conclusion, the proof of \eqref{eq:FA2f:upper} for $\bbE_\mu(\t_0)$ boils down to proving
\begin{equation}
  \label{eq:caldo}
  \lim_{q\to 0}T\bbP_\mu(\t_0>t_*)=0.  
\end{equation}
Similarly, \eqref{eq:FA2f:upper} for $\t$ w.h.p.\ follows, since \eqref{eq:caldo} gives
\[\bbP_\m(\t_0>t_*)\le o(1/T)\le o(1).\]

The key ingredients to prove \eqref{eq:caldo} are Propositions \ref{prop:probadroplet} and \ref{thm:droplet} and Proposition \ref{prop:g-CBSEP} below. The latter is a Poincar\'e inequality for an auxiliary 
process, the \emph{generalised coalescing and branching symmetric exclusion process} ($g$-CBSEP), preliminarily studied in \cite{Hartarsky22CBSEP}.
Once we have these key ingredients, the
strategy to prove \eqref{eq:caldo} is similar to the one in \cite{Hartarsky21a}*{Section 5}. In particular, for the first part of the proof (Section \ref{sec:2steps}) we will omit most of the details and refer to \cite{Hartarsky21a}*{Section 5} for a more detailed explanation. 

\subsection{The $g$-CBSEP process}
\label{sec:CBSEP}
Given a finite connected graph $G=(V,E)$ and a finite probability space $(\cS,\p)$, assign a variable $\s_x\in \cS$ to each vertex $x\in V$ and
write $\s=(\s_x)_{x\in V}$ and $\p_G(\s)=\prod_x\p(\s_x)$. Fix
also a
bipartition $\cS_1\sqcup \cS_0=\cS$ such that $\p(\cS_1)>0$ and define the  projection $\f:\cS^V\to \{0,1\}^V$ by $\varphi(\s)=(\1_{\{\s_x\in
  \cS_1\}})_{x\in V}$. We will say that a vertex $x$ is occupied by a
\emph{particle} if $\s_x\in \cS_1$ and we will write $\O_G^+\subset
\O_G=\cS^V$ for the set of configurations $\s$ with at least one particle. Finally, for any edge $e=\{x,y\}\in E$ let $\cE_e$
be the event that there exists a particle at $x$ or at $y$.

The $g$-CBSEP continuous time Markov chain on $\O^+_G$ with parameters
$(\cS,\cS_1,\p)$ runs as
follows. The state $\{\s_x,\s_y\} $ of every edge $e=\{x,y\}$ for
which $\cE_e$ holds is resampled
with rate one (independently of all the other edges) w.r.t.\ $\p_x\otimes\p_y(\cdot\tc \cE_e)$. Thus, an edge containing
exactly one particle can swap the position of the particle between its
endpoints or can create a new particle at the empty endpoint (a
branching transition). An edge
with two particles can kill one of them (a coalescing transition) with equal probability or keep
them untouched. Notice also that the state of an edge can change completely even when the particles are untouched.
\begin{rem}
\label{rem:CBSEP}
When the parameters $(\cS,\cS_1,\pi)$ are the two point space $\{0,1\},$ the set $\{1\}$ and the Bernoulli(p) measure on $\cS$ respectively, the $g$-CBSEP chain is called the CBSEP chain on $G$ with parameter $p$. It is easy to verify that the projection of the $g$-CBSEP chain under the mapping $\varphi$ defined above coincides with the CBSEP chain with parameter $p=\pi(\cS_1)$. This observation will be used in the proof of  Proposition \ref{prop:g-CBSEP} below. We will use $g$-CBSEP rather than plain CBSEP, because the space $\cS$ will correspond to the state of the chain in a mesoscopic box. The event $\cS_1$ will correspond to the presence of a mobile droplet in this box. 
\end{rem}

It is immediate to check that $g$-CBSEP is ergodic on
$\O_G^+$ with reversible stationary measure $\p_G^+:=\p_G(\cdot\tc
\O_G^+)$ and that its Dirichlet form $\cD^{\text{$g$-CBSEP}}(f)$ for $f:\O_G^+\to \bbR$,
takes the form
\[
  \cD^{\text{$g$-CBSEP}}(f)= \sum_{e\in
    E}\p^+_G\big(\1_{\cE_e}\var_{e}(f\tc \cE_e)\big),
\]
where $\var_{e}(f\tc \cE_e)$ is the variance w.r.t.\ $\s_x,\s_y$ conditioned on $\cE_e$ if $e=\{x,y\}$.
Let now $\trel[\text{$g$-CBSEP}]$ be the \emph{relaxation time} of $g$-CBSEP
on $\O_G^+$ defined as the best constant $C$ in the Poincar\'e
inequality
\[
  \var_{\p_G^+}(f)\le C\cD^{\text{$g$-CBSEP}}(f).
\]
In the above setting the main result needed to prove \eqref{eq:caldo}
is as follows. For any positive integers $d$ and $L$ set $n=L^d$ and let $\bbZ_L=\{0,1,\dots,
L-1\}$ be the set of remainders modulo $L$. The $d$-dimensional
discrete torus with $n$ vertices, $\bbT^d_n$ in the sequel, is the set $\bbZ_L^d$
endowed with the graph structure inherited from $\bbZ^d$. In what follows we will allow $\cS$, $\cS_1$ and $\p$ to depend on $n$.
\begin{prop}
  \label{prop:g-CBSEP}
Let $d\ge 2$, $G=\bbT^d_n$ and assume that $\lim_{n\to
  \infty}n \p(\cS_1)=+\infty$.
Then, as $n\to \infty$,
for any function $f:\O_G^+\mapsto \bbR$
\[
 \var_{\p_G^+}(f)\le O\big(\p(\cS_1)^{-1}\max\big(1,\log\big(\p(\cS_1)^{-1}\big)\big)\big)\cdot \cD^{\text{$g$-CBSEP}}(f).
\]
In particular,
\[
\trel[\text{$g$-CBSEP}]\le
O\big(\p(\cS_1)^{-1}\max\big(1,\log\big(\p(\cS_1)^{-1}\big)\big)\big).
\]
\end{prop}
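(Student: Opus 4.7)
The plan is to reduce $g$-CBSEP to standard CBSEP via the projection $\varphi$ and a two-scale variance decomposition, then invoke the Poincar\'e inequality for CBSEP on $\bbT^d_n$ established in the preparatory work \cite{Hartarsky22CBSEP}. Write $p=\pi(\cS_1)$ and $\eta:=\varphi(\sigma)\in\{0,1\}^V$; under $\pi_G$, $\eta$ has law $\mu_p=\mathrm{Bernoulli}(p)^V$, and conditionally on $\eta$ the coordinates $\sigma_x$ are independent with $\sigma_x\sim\pi(\cdot\tc\cS_{\eta_x})$. Let $\mu_p^+:=\mu_p(\cdot\tc\eta\ne\mathbf 0)$, which is the reversible law of CBSEP, and set $\bar f(\eta):=\pi_G^+(f\tc\eta)$ for $f:\Omega_G^+\to\bbR$.

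First I would apply the law of total variance,
\[
\var_{\pi_G^+}(f)=\var_{\mu_p^+}(\bar f)+\pi_G^+\big(\var(f\tc\eta)\big),
\]
and treat the two terms separately. For the external one, since the marginal of $\pi_x\otimes\pi_y(\cdot\tc\cE_e)$ under $\varphi$ equals $\mathrm{Bernoulli}(p)^{\otimes 2}(\cdot\tc\cE_e)$, the $\varphi$-projection of a $g$-CBSEP edge update is exactly a CBSEP edge update, so $\varphi(\sigma(t))$ is the standard CBSEP on $G$ with parameter $p$. Applying the Poincar\'e inequality for CBSEP on $\bbT^d_n$ from \cite{Hartarsky22CBSEP} yields
\[
\var_{\mu_p^+}(\bar f)\le O\big(p^{-1}\max(1,\log(1/p))\big)\cdot\cD^{\mathrm{CBSEP}}(\bar f),
\]
while the conditional variance formula applied edge by edge gives $\cD^{\mathrm{CBSEP}}(\bar f)\le\cD^{\text{$g$-CBSEP}}(f)$.

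For the internal term I would exploit the product structure of $\pi_G^+(\cdot\tc\eta)$ and tensorize to obtain $\var(f\tc\eta)=\sum_{x\in V}\pi_G^+\big(\var_x(f\tc\sigma_{V\setminus\{x\}})\tc\eta\big)$, with each $\var_x$ computed w.r.t.\ $\pi(\cdot\tc\cS_{\eta_x})$. For every $x$ with $\eta_x=1$ or with some neighbour $y$ satisfying $\eta_y=1$, the edge $e=\{x,y\}$ is active and under $\pi_x\otimes\pi_y(\cdot\tc\cE_e)$ the events $\{(\eta_x,\eta_y)=(1,0)\}$ and $\{(\eta_x,\eta_y)=(0,1)\}$ each carry $\Theta(1)$ mass with the correct marginal law on $\sigma_x$. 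Averaging over $\sigma_y$ this yields $\var_x(f\tc\sigma_{V\setminus\{x\}})\le O(1)\var_e(f\tc\cE_e)$, so the local variances at such $x$ are already dominated by $\cD^{\text{$g$-CBSEP}}(f)$. The remaining ``isolated'' vertices (no particle at $x$ and none at any neighbour) I would absorb via a short canonical-flow argument using CBSEP moves to bring a particle adjacent to $x$, whose cost is swallowed by the CBSEP Poincar\'e constant already appearing in the external bound.

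The main obstacle will be the CBSEP Poincar\'e inequality on $\bbT^d_n$ invoked above. Because CBSEP updates can only fire on edges that already contain a particle, the spectral gap cannot be controlled by any naive product or comparison argument; one must construct canonical flows on the torus routed through active edges only, and carefully balance path multiplicity against the logarithmic entropy cost of deciding where to transport a particle to any prescribed edge. Producing the sharp $O(p^{-1}\log(1/p))$ constant in this fashion is the essential technical contribution of \cite{Hartarsky22CBSEP} on which the present proposition rests.
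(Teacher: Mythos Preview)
Your decomposition $\var_{\pi_G^+}(f)=\var_{\mu_p^+}(\bar f)+\pi_G^+(\var(f\tc\eta))$ is a natural idea and is indeed how one passes from $g$-CBSEP to CBSEP in \cite{Hartarsky22CBSEP}. The external term is handled exactly as you say. The gap is in the internal term, and it is not minor.

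After tensorising $\var(f\tc\eta)\le\sum_x\pi_G^+(\var_x(f\tc\sigma_{V\setminus\{x\}})\tc\eta)$, the contribution of a site $x$ that is isolated (no particle at $x$ or any neighbour) involves $\var_x(f)$ taken over $\sigma_x\sim\pi(\cdot\tc\cS_0)$ with \emph{all} neighbours $\sigma_y\in\cS_0$. No edge containing $x$ is active in this configuration, so no term of $\cD^{\text{$g$-CBSEP}}(f)$ sees this variance directly. Your proposed fix (``bring a particle next to $x$ via CBSEP moves'') is precisely the cover-time mechanism behind \cite{Hartarsky22CBSEP}*{Theorem~2}, which bounds $\trel[\text{$g$-CBSEP}]$ by $O(\tmix[\text{CBSEP}]+T^{\mathrm{rw}}_{\mathrm{cov}})$. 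Applied on the full torus $\bbT^d_n$ this yields a cost of order $n\log n$ (the cover time of $\bbT^d_n$), not $p^{-1}\log(1/p)$; the CBSEP Poincar\'e constant does \emph{not} swallow it. Concretely, the canonical flow you describe routes every isolated $x$ to its nearest particle, and since almost every vertex is isolated while there are only $\Theta(np)$ particles, the congestion on edges near a given particle is of order $p^{-1}$ and the path length is $p^{-1/d}$, already exceeding the target.

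The paper avoids this by a spatial renormalisation that your argument lacks: it partitions $\bbT^d_n$ into boxes of volume $\ell^d=\Theta(p^{-1})$, so that each box contains a particle with probability $1-e^{-1}+o(1)$. At this coarse scale the projected chain has density $\Theta(1)$ and relaxation time $O(1)$ by \cite{Hartarsky22CBSEP}*{Theorem~1} (this is where the projection idea is used, but at constant density). The internal relaxation inside a pair of neighbouring boxes is then handled by \cite{Hartarsky22CBSEP}*{Theorem~2}, and now the cover time of a box of volume $\Theta(p^{-1})$ is $O(p^{-1}\log(1/p))$, which is the desired bound. In short, your projection step is correct but placed at the wrong scale: it must be combined with a renormalisation so that the cover-time cost is paid on boxes of size $p^{-1}$ rather than on the whole torus.
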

This is proved in Appendix \ref{app:CBSEP}.

\subsection{Transforming \eqref{eq:caldo} into a Poincar\'e inequality}
\label{sec:2steps}
Using standard ``finite speed of propagation'' bounds (see \cite{Hartarsky21a}*{Section 5.2.1}), \eqref{eq:caldo} follows if we prove
\begin{equation}
\label{eq:caldo2}
\lim_{q\to 0}T\bbP_\mu\big(\t_0^{(n)}>t_*\big)=0
\end{equation}
where $\t_0^{(n)}$ is the infection time of the origin $\t_0$ for FA-2f on the discrete torus $\bbT_n^2$ with linear size $\sqrt{n}=2T$. For this purpose  we fix a small positive constant $\d<1/2$ and choose $N_\d=N-\lfloor\log(1/\d)/\sqrt{q}\rfloor$ where $N=\lceil\frac{8\log(1/q)}{\sqrt{q}}\rceil$ is the final scale in the droplet construction  (see \eqref{defn}). With this choice $\ell_{N_\d}\simeq \d \ell_N=\d/q^{17/2 +o(1)}$ (cf.\ \eqref{def:fn}) and w.l.o.g.\ we assume that $\ell_{N_\d}$ divides $2T$.
We then partition the torus $\bbT_n^2$ into $M=n/\ell^2_{N_\d}$ equal mesoscopic disjoint boxes $(Q_j)_{j=1}^M$, where each $Q_j$ is a suitable lattice translation by a vector in $\bbT_n^2$ of the box $Q=[\ell_{N_\d}]^{2}=\Lambda^{(2N_{\delta})}$ (see \eqref{def:Lambdan}). The labels of the boxes can be thought of as belonging to the new torus $\bbT_M^2$ and we assume that $Q_i,Q_j$ are neighbouring boxes in $\bbT_n^2$ iff $i, j$ are neighbouring sites in $\bbT_M^2$. In $\O_{\bbT_n^2}$  we consider the event
\begin{equation}
    \label{eq:def:E}
      \cE=\bigcup_{j\in \bbT_M^2} \cS\cG_{j}\cap \bigcap_{i\in \bbT_M^2} \cG_i
\end{equation}
where $\SG_j$ is the event that $Q_j$ is super-good (see Definition \ref{def:droplets}) and $\cG_i$ is the event that any row and any column (of lattice sites) of $Q_i$ contains an infected site.

In order to apply the same strategy as \cite{Hartarsky21a}*{Section 5}
it is crucial to have that the ``environment'' characterised by $\cE$
is so likely that (cf.\ \cite{Hartarsky21a}*{(28)})
\begin{equation}
  \label{eq:scale choice}
\lim_{q\to 0}\mu(\cE^c)T^{3} t_*=0.  
\end{equation}
Using  $t_*=e^{\frac{\pi^2}{9q}(1+o(1))}, T=\lfloor\exp(\log^4(1/q)/q)\rfloor, M=4T^2/\ell^2_{N_{\d}}$, $\ell_{N_\d}=O(1/q^9),$ together with  Proposition \ref{prop:probadroplet}, it follows that
\[
\lim_{q\to 0} T^3t_*\mu\big(\bigcap_{j\in \bbT_M^2} \cS\cG^c_{j}\big)\le \lim_{q\to 0} T^3t_* \big(1-e^{-\frac{\pi^2}{9q}(1+O(\sqrt{q}\log^{2}(1/q)))}\big)^{M} =0.
\]
Similarly, using $\mu(\cG_i^c)\le 2\ell_{N_\d} (1-q)^{\ell_{N_\d}}\le 2\ell_{N_\d} e^{-q\ell_{N_\d}}$ together with $\ell_{N_\d}=\O( 1/q^{8})$, we get
\[
\lim_{q\to 0} T^3t_*\mu\big(\bigcup_{i\in \bbT_M^2}\cG^c_{i}\big)\le T^3t_* M2\ell_{N_\d}e^{-q\ell_{N_\d}} =0,
\]
and \eqref{eq:scale choice} follows.

An easy consequence of \eqref{eq:scale choice} (cf.\ \cite{Hartarsky21a}*{Eq.~(29)}) is that as $q\to 0$
\begin{equation}
\label{eq:T:environment}
  T\bbP_\mu\big(\t_0^{(n)}\ge t_*\big)\le T\bbP_\mu\big(\t^{(n)}_\cF\ge t_*\big) +o(1),
\end{equation}
where $\t^{(n)}_\cF$ denotes the hitting time of the set $\cF=\{\o:\o_0=0\}\cup \cE^c$ for the FA-2f chain on $\bbT_{n}^2$. In order to bound from above the term $\bbP_\mu\big(\t^{(n)}_\cF\ge t_*\big)$ we follow the
standard ``variational'' approach (see \cite{Hartarsky21a}*{Eq.~(30)} and \cite{Asselah01}*{Theorem 2}). 

Let $\cD_{\bbT_n^2}(f)=\sum_{x\in \bbT_n^2}\mu_{\bbT_n^2}\big(c_x^{\bbT_n^2}\var_x(f)\big),$ where $c_x^{\bbT_n^2}$ is the FA-2f constraint at $x$ for the torus $\bbT_n^2$ (see \eqref{def:ratesbis}), be 
the Dirichlet form of the FA-2f chain on the torus $\bbT_n^2$. Then
\begin{align}
\label{eq:lF}
T\bbP_\mu\big(\t^{(n)}_\cF\ge t_*\big)\le{}& Te^{-t_* \l^{(n)}_\cF},&\l^{(n)}_\cF={}& \inf\Big\{  \frac{\cD_{\bbT_n^2}(f)}{\mu_{\bbT_n^2}(f^2)}:f|_{\cF}=0\Big\}.
\end{align}
It remains to prove a precise lower bound on the coefficient $ \l^{(n)}_\cF$.

\subsection{Bounding $\l^{(n)}_\cF$ from below}
\label{sec:lambdaF} 
The last and most important step is to prove that
\begin{equation}
  \label{eq:lambdaF}
 \lambda^{(n)}_\cF\ge    e^{-O(\log^{3}(1/q)/\sqrt{q})} \rd \ge e^{-\frac{\pi^2}{9q}(1+O(\sqrt q\log^3(1/q)))},
\end{equation}
where $\rd\ge \exp(-\frac{\pi^2}{9q}(1+O(\sqrt{q}\log^{2}(1/q))))$ is the probability
that a box $[\ell_N]^2$ is super-good  (cf.\ Proposition \ref{prop:probadroplet}). Once
\eqref{eq:lambdaF} is established, recalling \eqref{eq:T:environment} and \eqref{eq:lF}, the proof of \eqref{eq:caldo2} is
complete because $t_*\l^{(n)}_\cF$ diverges rapidly enough as $q\to 0$ if the constant $\kappa$ in the definition \eqref{eq:def:tstar} of $t_*$ is chosen large enough.

It was proved in \cite{Hartarsky21a}*{Eq.~(31)}
that $\l^{(n)}_\cF\ge q\inf_{f}\frac{\cD_{\bbT_n^2}(f)}{\var_{\bbT_n^2}(f\tc \cE)}$, where the infimum is over $f:\O_{\bbT_n^2}\to\bbR$ such that $f|_{\cE^c}=0$ and $f|_\cE$ is not constant. In what follows $f$ will denote an arbitrary such function and the various constants involved in the estimates will be uniform in $f$. Hence, \eqref{eq:lambdaF} follows, once we prove that
\begin{equation}
  \label{eq:lambdaF2}
\frac{\cD_{\bbT_n^2}(f)}{\var_{\bbT_n^2}(f\tc \cE)} \ge    \exp\Big(-\frac{\pi^2}{9q}\big(1+O(\log^{3}(1/q)\sqrt{q})\big)\Big).
\end{equation}
\paragraph{Bounding $\var_{\bbT_n^2}(f\tc \cE)$, using  Proposition \ref{prop:g-CBSEP}.} Write $G$ for the graph
$\bbT_M^2$, $\cS$ for the state space
$\cG_i\subset\O_{Q_i}$ with $i\in\bbT_M^2$, $\p$ for $\mu_{Q_i}(\cdot\tc \cG_i)$ and $\cS_1\subset \cS$ for the
event $\SG_i$. Since both $\cG_i$ and $\cS\cG_i$ are increasing in the set of infections, 
\begin{equation}
\label{eq:piS1}
\pi(\cS_1)\ge \mu(\cS\cG_i)\ge \exp\Big(-\frac{\pi^2}{9q}\big(1+O\big(\sqrt{q}\log^{2}(1/q)\big)\big)\Big),
\end{equation}
where we used the Harris inequality \cite{Harris60} for the first inequality and Proposition
\ref{prop:probadroplet} for the second one.
Recalling that $M=n/\ell_{N_{\delta}}^2$ with $n=4T^2$, $\ell_{N_\d}=O(1/q^9)$ and $T=\lfloor\exp(\log^4(1/q)/q)\rfloor$, the above bound implies $\lim_{q\to 0}M\pi(S_1)=+\infty$ so that the requirement of Proposition
\ref{prop:g-CBSEP} is fulfilled.

With this notation we consider the $g$-CBSEP on $\O_G^+$ with parameters $(\cS,\cS_1,\p)$. Recalling \eqref{eq:def:E}, we identify $f$ with a function $f_G:\O_G^+\to \bbR$ via the  natural bijection between $\cE$ and $\O_G^+$: $f(\o) = f_G(\o_{Q_1},\dots,\o_{Q_{M}})$. Under this bijection
\begin{align*}
\var_{\p^+_G}(f_G)&{}=\var_{\bbT_n^2}(f\tc \cE),\\
\cD^{\text{$g$-CBSEP}}(f_G)&{}=\sum_{i\sim j}\mu_{\bbT_n^2}\big(\1_{\SG_{i,j}}
\var_{Q_i\cup Q_j}(f\tc\SG_{i,j})\tc \cE\big),
\end{align*}
where $\cS\cG_{i,j}$ is a shorthand notation for the event $(\SG_i\cup \SG_j)\cap\cG_i\cap\cG_j$ and $\sum_{i\sim j}$ denotes the sum over pairs, each counted once, of adjacent boxes. Using Proposition
  \ref{prop:g-CBSEP} and \eqref{eq:piS1} we conclude that 
\begin{align}
    \var_{\bbT_n^2}(f \tc \cE)={}&\var_{\p^+_G}(f_G)
    \le O(\p(\cS_1)^{-1}\log(1/\p(\cS_1))\big)\cD^{\text{$g$-CBSEP}}(f_G)\nonumber\\
   \le{}&
     \label{eq:part1}\begin{multlined}[t]
   \exp\Big(\frac{\pi^2}{9q}(1+O(\sqrt{q}\log^{2}(1/q)))\Big)
  \\\times\sum_{i\sim j}\mu_{\bbT_n^2}\big(
    \1_{\SG_{i,j}}
     \var_{Q_i\cup Q_j}(f\tc \SG_{i,j})\tc \cE\big).\end{multlined}
  \end{align}
  
\paragraph{Bounding $\cD_{\bbT_n^2}(f)$, using Proposition \ref{thm:droplet}.}
We next compare the sum appearing in the
r.h.s.\ of \eqref{eq:part1} to the Dirichlet form $\cD_{\bbT_n^2}(f)$ and prove that the ``comparison cost'' is at most $\exp\big(O\big(\log^3(1/q)/\sqrt q\big)\big)$, so sub-leading w.r.t.\ the main term $\exp(\frac{\pi^2}{9q})$ in \eqref{eq:part1}.
  \begin{lem}
\label{lem:comparison}
\[
\sum_{i\sim j}\mu_{\bbT_n^2}\big(\1_{\SG_{i,j}}
     \var_{Q_i\cup Q_j}(f\tc\SG_{i,j})\tc
     \cE\big)
     \le\\
      e^{O(\log^{3}(1/q)/\sqrt{q})}\cD_{\bbT_n^2}(f).
\] 
\end{lem}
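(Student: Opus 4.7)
The plan is to bound each summand $\m_{\bbT_n^2}(\1_{\SG_{i,j}}\var_{Q_i\cup Q_j}(f\tc\SG_{i,j})\tc\cE)$ by a local fraction of the FA-$2$f Dirichlet form on the rectangle $R_{i,j}:=Q_i\cup Q_j$, with prefactor at most $\exp(O(\log^3(1/q)/\sqrt q))$, and then to sum over the $O(M)$ edges. The main input is a mild enlargement of Proposition \ref{thm:droplet}.

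First, I would extend Proposition \ref{thm:droplet} to all rectangles of class at most $2N_\delta+K$ with $K=\lceil\log(2)/\sqrt q\rceil$, by running $K$ further iterations of Lemmas \ref{lem:bisectio} and \ref{lem:lin}. Since $\ell_{m+1}/\ell_m=e^{\sqrt q(1+o(1))}$, doubling a side length requires only $O(1/\sqrt q)$ extra scales and each such scale multiplies the relaxation bound by at most $\exp(O(\log^2(1/q)))$, so the cumulative bound $\max_\o\g^\o(R)\le\exp(O(\log^3(1/q)/\sqrt q))$ is preserved. Every pair-rectangle $R_{i,j}$, which has dimensions $\ell_{N_\delta}\times 2\ell_{N_\delta}$, fits inside some translate of $\Lambda^{(2N_\delta+K)}$.

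Next, fix $i\sim j$, say $Q_j=Q_i+\ell_{N_\delta}\vec e_1$. The event $\SG_{i,j}=(\SG_i\cup\SG_j)\cap\cG_i\cap\cG_j$ is a disjunction rather than a bare super-good event, so Proposition \ref{thm:droplet} does not apply verbatim. I would instead invoke a constrained Poincar\'e inequality of the type of Proposition \ref{key:bisection2}, partitioning $R_{i,j}$ into its two halves $Q_i,Q_j$, taking $\cA_1=\cG_i$, $\cA_3=\cG_j$, $\cC_{1,2},\cC_{2,3}$ the single-box super-good events translated onto $Q_i\cup V$ and $V\cup Q_j$ for a suitable thin overlap column $V$, and using $\SG_i,\SG_j$ in the roles of $\hat\cC_{1,2}, \cA_3^{\omega_1\omega_2}$. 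The probability ratios appearing in $\Ta^{(2)}$ are controlled by Proposition \ref{prop:probadroplet} together with the Harris inequality, since the super-good and $\cG$-events are both increasing and have the same leading exponential asymptotics; this yields $\Ta^{(2)}=\exp(O(\log^3(1/q)/\sqrt q))$. Combined with Step~1 applied to each of the two super-good halves, this produces the Poincar\'e inequality
\begin{equation*}
\var_{R_{i,j}}(f\tc\SG_{i,j})\le e^{O(\log^3(1/q)/\sqrt q)}\sum_{x\in R_{i,j}}\mu_{R_{i,j}}\big(c_x^{R_{i,j},\bone}\var_x(f)\tc\SG_{i,j}\big).
\end{equation*}

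Finally, since $c_x^{R_{i,j},\bone}\le c_x^{\bbT_n^2}(\eta)$ pointwise (healthy boundary conditions are the most restrictive for the FA-$2$f constraint) and $\m(\cE)=1-o(1)$ by \eqref{eq:scale choice}, removing the conditioning on $\cE$ and on $\SG_{i,j}$ costs only a multiplicative $O(1)$ factor; thus the displayed bound translates into
\begin{equation*}
\m_{\bbT_n^2}\big(\1_{\SG_{i,j}}\var_{R_{i,j}}(f\tc\SG_{i,j})\tc\cE\big)\le e^{O(\log^3(1/q)/\sqrt q)}\sum_{x\in R_{i,j}}\m_{\bbT_n^2}\big(c_x^{\bbT_n^2}\var_x(f)\big).
\end{equation*}
Summing over adjacent pairs $i\sim j$, and using that each $x\in\bbT_n^2$ lies in at most $4$ rectangles $R_{i,j}$ (one per adjacent box), yields the claim. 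The principal obstacle is Step~2: carefully choosing the auxiliary events so that the constrained Poincar\'e machinery of Section \ref{sec:Poinc} can convert the disjunctive event $\SG_i\cup\SG_j$ into two genuine super-good events on which the enlarged Proposition \ref{thm:droplet} applies, and verifying that all probability ratios stay within the target factor $e^{O(\log^3(1/q)/\sqrt q)}$.
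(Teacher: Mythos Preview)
Your Step~2 has a genuine gap. In any three-block setup adapted to $R_{i,j}=Q_i\cup Q_j$ with a thin middle column $V$, the events $\cM$ and $\cN$ of Proposition~\ref{key:bisection2} must (up to the harmless $\cG_i\cap\cG_j$ factor) be $\SG_i$ and $\SG_j$ respectively. The hypothesis~\eqref{Hbar} then forces $\cA_3^{\o_1,\o_2}$, given $(\o_1,\o_2)\in\hat\cC_{1,2}$, to imply $\SG_j$. But no information about $\o_{Q_i}$ can make $Q_j$ super-good, so $\cA_3^{\o_1,\o_2}$ must itself be contained in $\SG_j\cap\cG_j$, whence
\[
\Ta^{(2)}\ge \frac{\pi(\cA_3)}{\pi(\cA_3^{\o_1,\o_2})}\ge \frac{\mu(\cG_j)}{\mu(\SG_j\cap\cG_j)}=\exp\Big(\frac{\pi^2}{9q}(1+o(1))\Big),
\]
not $\exp(O(\log^3(1/q)/\sqrt q))$. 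Your claim that ``the super-good and $\cG$-events\dots have the same leading exponential asymptotics'' is simply false: $\mu(\cG_j)=1-o(1)$ while $\mu(\SG_j)=\rd$. The underlying obstruction is structural: on $R_{i,j}$ the two positions of the core droplet are separated by $\ell_{N_\d}$ columns, so a one-shot auxiliary chain cannot slide the droplet from $Q_i$ to $Q_j$ without recreating it from scratch.

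The paper circumvents this by enlarging, not to a box of width $2\ell_{N_\d}$ as in your Step~1, but all the way to a translate $\L_{i,j}$ of $\L^{(2N)}$, whose side $\ell_N\approx\d^{-1}\ell_{N_\d}$ comfortably contains $Q_i\cup Q_j$. The key point is Observation~\ref{obs:SG1}: the event $\SG_{i,j}\cap\cG(\L_{i,j})$ implies the \emph{single} event $\SG(\L_{i,j})$, because in Definition~\ref{def:supergood} at scale $2N$ the core of type $\L^{(2N_\d)}$ may sit in any position, and the row/column infections guaranteed by $\cG(\L_{i,j})$ supply the required traversability at every intermediate scale. One then passes from $\var_{Q_i\cup Q_j}(f\tc\SG_{i,j})$ to $\var_{\L_{i,j}}(f\tc\SG(\L_{i,j}))$ via the law of total variance and the inclusion of events (losing only a harmless probability ratio), and applies Proposition~\ref{thm:droplet} directly on $\L_{i,j}$. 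No disjunction ever needs to be resolved by the block-chain machinery.
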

  \begin{rem}
As it will be clear from the proof, we actually prove a stronger statement, namely the constraint $c_x^{\bbT_n^2}$ in the expression of $\cD_{\bbT_n^2}(f)$ will appear multiplied by the indicator that $x$ belongs to a droplet. While for many choices of $f$ the presence of this additional constraint may completely change the average $\mu_{\bbT_n^2}\big(c_x^{\bbT_n^2}\var_x(f)\big)$, it is possible to exhibit choices of $f$, for which $$\1_{\{\text{$x$ belongs to a
       ``droplet''}\}}c_x^{\bbT_n^2}\var_x(f)\simeq c_x^{\bbT_n^2}\var_x(f).$$
    \end{rem}
Before proving Lemma \ref{lem:comparison}, let us observe that, together with \eqref{eq:part1}, it implies the desired \eqref{eq:lambdaF2}. Lemma \ref{lem:comparison} itself follows by summing the bound from Claim \ref{claim:pippo} below.

\begin{clm}
  \label{claim:pippo}
Fix two adjacent boxes $Q_i,Q_j$ and let $\L_{i,j}\supset Q_i\cup Q_j$ be a translate of the box $\L^{(2N)}$. Then
\begin{multline*}\mu_{\bbT_n^2}\big(
    \1_{\SG_{i,j}}
    \var_{Q_i\cup Q_j}(f\tc \SG_{i,j})\tc \cE\big)\\
    \le e^{O(\log^{3}(1/q)/\sqrt{q})}\sum_{x\in \L_{i,j}}\mu_{\bbT_n^2}\big(\1_{\cS\cG(\L_{i,j})}c_x^{\bbT_n^2}\var_x(f)\big). 
\end{multline*}
\end{clm}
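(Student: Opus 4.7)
The plan is to reduce the claim to the Poincaré inequality of Proposition~\ref{thm:droplet} applied on the larger box $\Lambda_{i,j}$, via a variance comparison between the local event $\SG_{i,j}$ (on $Q_i\cup Q_j$) and the global event $\SG(\Lambda_{i,j})$ (on $\Lambda_{i,j}$). Throughout, set $E=\Lambda_{i,j}\setminus(Q_i\cup Q_j)$ and recall that $\SG_{i,j}$ depends only on $\omega_{Q_i\cup Q_j}$.

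\emph{Step 1 (probability comparison).} By symmetry, we restrict to the subevent $\SG_i\cap\cG_j\subseteq \SG_{i,j}$, losing at most a factor $2$. Let $\cT$ be the event, measurable w.r.t.\ $\omega_{\Lambda_{i,j}\setminus Q_i}$, that the nested annular rectangles around $Q_i$ at scales $2N_\delta+1,\ldots,2N$ are $\mathbf 1$-traversable in the directions prescribed by Definition~\ref{def:supergood}, so that $\SG_i\cap\cT\subseteq\SG(\Lambda_{i,j})$. Since $\ell_{N_\delta}=\Theta(\delta q^{-17/2})$ and consequently $q\ell_{N_\delta}\to\infty$ as $q\to 0$, a union bound over scales yields $\mu(\cT^c)=o(1)$. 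Combined with the independence of $\SG_i$ and $\cT$ (which live on disjoint regions), this gives $\mu(\SG(\Lambda_{i,j}))\geq(1-o(1))\mu(\SG_i)$, and hence $\mu(\SG_{i,j})=O(\mu(\SG(\Lambda_{i,j})))$.

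\emph{Step 2 (variance comparison).} Conditional on $\SG_{i,j}$ the variables $\omega_E$ and $\omega_{Q_i\cup Q_j}$ remain independent under $\mu_{\Lambda_{i,j}}$; by the law of total variance,
\[
\mathbb E_{\omega_E}\!\left[\var_{Q_i\cup Q_j}(f\tc\SG_{i,j})\right]\leq \var_{\Lambda_{i,j}}(f\tc\SG_{i,j}).
\]
Averaging $\1_{\SG_{i,j}}\var_{Q_i\cup Q_j}(f\tc\SG_{i,j})$ over $\omega$, combined with Step~1, reduces the task to bounding $\var_{\Lambda_{i,j}}(f\tc\SG_{i,j})$ by $q^{-O(1)}\cdot\var_{\Lambda_{i,j}}(f\tc\SG(\Lambda_{i,j}))$ plus an FA-1f-type Dirichlet form on $E$ that can be absorbed into the FA-2f Dirichlet form on $\Lambda_{i,j}$. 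Splitting $\SG_{i,j}=(\SG_{i,j}\cap\cT)\sqcup(\SG_{i,j}\setminus\cT)$, the first part is dominated by $\mu(\1_{\SG(\Lambda_{i,j})}(f-\bar f)^2)=\mu(\SG(\Lambda_{i,j}))\var_{\Lambda_{i,j}}(f\tc\SG)$ with the pivot $\bar f=\mu_{\Lambda_{i,j}}(f\tc\SG(\Lambda_{i,j}))$. The contribution of the rare event $\cT^c$ is controlled via a two-block Poincaré inequality (Proposition~\ref{prop:2block}) applied to the independent pair $(\omega_{Q_i\cup Q_j},\omega_E)$, together with the FA-1f Poincaré estimate of Proposition~\ref{prop:FA1f}, yielding an absorbable FA-1f Dirichlet form on $E$ at cost $q^{-O(1)}$.

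\emph{Step 3 (Poincaré on $\SG(\Lambda_{i,j})$).} Proposition~\ref{thm:droplet} with $n=2N$ and boundary $\omega=\mathbf 1$ gives
\[
\var_{\Lambda_{i,j}}(f\tc\SG)\leq e^{O(\log^3(1/q)/\sqrt q)}\sum_{x\in\Lambda_{i,j}}\mu_{\Lambda_{i,j}}\bigl(c_x^{\Lambda_{i,j},\mathbf 1}\var_x(f)\tc\SG\bigr).
\]
Since $c_x^{\Lambda_{i,j},\mathbf 1}\leq c_x^{\bbT_n^2}$, multiplying by $\mu_{\Lambda_{i,j}}(\SG)$ and averaging over $\omega_{\bbT_n^2\setminus\Lambda_{i,j}}$ yields
\[
\mu\bigl(\1_{\SG(\Lambda_{i,j})}\var_{\Lambda_{i,j}}(f\tc\SG)\bigr)\leq e^{O(\log^3(1/q)/\sqrt q)}\sum_x\mu\bigl(\1_{\SG(\Lambda_{i,j})}c_x^{\bbT_n^2}\var_x(f)\bigr).
\]
Combining Steps 1--3 with $\mu(\cE)=1-o(1)$ (which gives the outer $\cE$-conditioning a cost of $1+o(1)$) proves the claim, the final prefactor $q^{-O(1)}\cdot e^{O(\log^3(1/q)/\sqrt q)}$ being absorbable into $e^{O(\log^3(1/q)/\sqrt q)}$.

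The main technical obstacle is Step~2: although $\mu(\SG_{i,j})$ and $\mu(\SG(\Lambda_{i,j}))$ are comparable by Step~1, the two events are not related by inclusion, so the variance comparison cannot be achieved by a crude probability bound; the FA-1f-type Poincaré inequalities developed in Section~\ref{sec:Poinc} and the two-block technology of Proposition~\ref{prop:2block} are deployed precisely to bridge this gap and to absorb the contribution of the rare event $\cT^c$ into the FA-2f Dirichlet form on $\Lambda_{i,j}$.
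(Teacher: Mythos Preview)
Your overall architecture is right: lift the local variance on $Q_i\cup Q_j$ to a variance on $\Lambda_{i,j}$ conditioned on an event contained in $\SG(\Lambda_{i,j})$, then apply Proposition~\ref{thm:droplet}. Steps~1 and~3 are essentially correct. The gap is Step~2.

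The claimed handling of the $\cT^c$ contribution does not work. Proposition~\ref{prop:2block} bounds a variance, not a second moment with pivot $\bar f=\mu_{\Lambda_{i,j}}(f\tc\SG(\Lambda_{i,j}))$, so it is not clear how to isolate the $\cT^c$ part in the way you describe. More seriously, even if you produced an FA-1f Dirichlet form $\sum_{x\in E}\mu\bigl(\tilde c_x\var_x(f)\bigr)$ on $E=\Lambda_{i,j}\setminus(Q_i\cup Q_j)$, it cannot be ``absorbed'' into the right-hand side of the claim: the target carries the indicator $\1_{\SG(\Lambda_{i,j})}$ and the FA-2f constraint $c_x^{\bbT_n^2}$, and inserting $\1_{\SG(\Lambda_{i,j})}$ costs a factor $\mu(\SG(\Lambda_{i,j}))^{-1}=e^{\pi^2/(9q)(1+o(1))}$, which is far larger than $e^{O(\log^3(1/q)/\sqrt q)}$. (The FA-1f/FA-2f interplay in the proof of Lemma~\ref{lem:lin} works only because there the FA-1f term lives on a single column neighbouring a super-good block and is multiplied by an appropriate indicator; your $E$ is a large annulus with no such structure.) There is also a measurability issue: your $\cT$ is declared measurable w.r.t.\ $\omega_{\Lambda_{i,j}\setminus Q_i}$, which includes $Q_j$, so $\cT$ and $\SG_{i,j}$ are not independent as your two-block argument assumes.

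The paper's proof avoids all of this by exploiting the structure of the environment event $\cE$ that you discard too early. Recall $\cE\subset\cG=\bigcap_k\cG_k$, and let $\cG(\Lambda_{i,j})$ be the restriction of $\cG$ to $\Lambda_{i,j}$: every row and column of any $Q_k$ that is entirely contained in $\Lambda_{i,j}$ has an infection. The key observation is simply that
\[
\SG_{i,j}\cap\cG(\Lambda_{i,j})\subset\SG(\Lambda_{i,j}),
\]
since the $\cG$-infections provide exactly the traversability of the annular pieces that you tried to manufacture via $\cT$. Because $\cG(\Lambda_{i,j})$ is inherited from the conditioning on $\cE$ (up to $1+o(1)$), one is directly in the regime $\cA\subset\cB$ with $\cA=\SG_{i,j}\cap\cG(\Lambda_{i,j})$ and $\cB=\SG(\Lambda_{i,j})$, and the elementary inequality $\var(f\tc\cA)\le\frac{\mu(\cB)}{\mu(\cA)}\var(f\tc\cB)$ gives Step~2 at no extra cost, with no rare event to handle and no FA-1f terms. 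This is precisely why $\cE$ was defined in \eqref{eq:def:E} with the $\bigcap_i\cG_i$ part.
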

\begin{proof}[Proof of Claim \ref{claim:pippo}]
Let $\cG=\bigcap_{k\in \bbT_M^2}\cG_k \supset \cE$ and recall that $\mu(\cE)=1-o(1)$. Let $\rho_{i,j}=\mu(\cS\cG_{i,j}\tc \cG
)$ and observe that the term $\var_{Q_i\cup Q_j}(\cdot)$
does not depend on the variables $\o_{Q_i},\o_{Q_j}$. Thus, 
\begin{multline}
\mu_{\bbT_n^2}\big(\1_{\SG_{i,j}}\var_{Q_i\cup Q_j}(f\tc \SG_{i,j})\tc\cE\big)\\
\begin{aligned}[t]
\le{}& (1+o(1))
\mu_{\bbT_n^2}\big(\1_{\SG_{i,j}}\var_{Q_i\cup Q_j}(f\tc \SG_{i,j})\tc\cG\big)\\
={}&(1+o(1))\rho_{i,j}\mu_{\bbT_n^2}\big(\var_{Q_i\cup Q_j}(f\tc \cS\cG_{i,j})\tc\cG\big).
\end{aligned}\label{eq:sw0}
     \end{multline}
Let $\cG(\L_{i,j})$ be the event that any $\ell_{N_\d}$ lattice sites contained in
$\L_{i,j}$ forming either a row or a column of some $Q_k$ contain an infection. With reference to Figure \ref{fig:obs:SG1}, we emphasise that the event $\cG(\L_{i,j})$ does not require anything about columns/rows which go out of $\L_{i,j}$. We define the event $\cG(\L_{i,j}^c)$ similarly with $\L_{i,j}$ replaced by
$\bbT_n^2\setminus \L_{i,j}$. Clearly $\cG\subset \cG(\L_{i,j})\cap \cG(\L_{i,j}^c)$ and
\begin{multline}
\label{eq:sw1}
\mu_{\bbT_n^2}\big(
    \var_{Q_i\cup Q_j}(f\tc \cS\cG_{i,j})\tc
      \cG\big)\\
\begin{aligned}[t]
\le {}&
      (1+o(1))\mu_{\bbT_n^2}\big(
    \var_{Q_i\cup Q_j}(f\tc \cS\cG_{i,j})\tc \cG(\L_{i,j})\cap \cG(\L_{i,j}^c)
      \big)\\={}& (1+o(1))\mu_{\bbT_n^2}\Big(\mu_{\L_{i,j}}\big(
    \var_{Q_i\cup Q_j}(f\tc \cS\cG_{i,j})\tc \cG(\L_{i,j})\big)\tc \cG(\L_{i,j}^c)
      \Big)
\end{aligned}
\end{multline}
In turn, the law of total variance implies that
\begin{gather}
\label{eq:sw2}  \mu_{\L_{i,j}}\big(
    \var_{Q_i\cup Q_j}(f\tc \cS\cG_{i,j})\tc \cG(\L_{i,j})\big)\le 
    \var_{\L_{i,j}}\big(f\tc \cS\cG_{i,j}\cap \cG(\L_{i,j})\big).
\end{gather}
Next comes a simple but key observation illustrated in Figure \ref{fig:obs:SG1}, whose formal proof is left to the reader.
\begin{obs}
  \label{obs:SG1}
The event $\SG_{i,j}\cap  \cG(\L_{i,j})$ implies the event $\SG(\L_{i,j})$.   
\end{obs}
Taking Observation \ref{obs:SG1} into account together with the inequality $\var(X\tc \cA)\le\frac{\bbP(\cB)}{\bbP(\cA)}\var(X\tc \cB)$,
valid for any random variable $X$ and events
$\cA\subset \cB$ with $\bbP(\cA)>0,$
we conclude that
\begin{equation}
    \var_{\L_{i,j}}\big(f\tc \cS\cG_{i,j}\cap \cG(\L_{i,j})\big)
\label{eq:sw3}     \le \frac{\m_{\L_{i,j}}(\SG(\L_{i,j}))}{\m_{\L_{i,j}}(\SG_{i,j}\cap\cG(\L_{i,j}))}\var_{\L_{i,j}}\big(f\tc \cS\cG(\L_{i,j})\big).
\end{equation}
From \eqref{eq:sw0}-\eqref{eq:sw3} we finally get
\begin{multline}
      \label{eq:sw4} 
\mu_{\bbT_n^2}\big(\1_{\SG_{i,j}}
    \var_{Q_i\cup Q_j}(f\tc \cS\cG_{i,j})\tc
      \cG\big)\\
\begin{aligned}[t]
\le{}& \frac{(1+o(1))\rho_{i,j}\m_{\L_{i,j}}(\SG(\L_{i,j}))}{\m_{\L_{i,j}}(\SG_{i,j}\cap\cG(\L_{i,j}))}\mu_{\bbT^2_n}\Big(\var_{\L_{i,j}}\big(f\tc \cS\cG(\L_{i,j})\big)\tc \cG(\L_{i,j}^c)\Big)\\
\le{}& (1+o(1))\m_{\L_{i,j}}(\SG(\L_{i,j}))\mu_{\bbT^2_n}\Big(\var_{\L_{i,j}}\big(f\tc \cS\cG(\L_{i,j})\big)\tc \cG(\L_{i,j}^c)\Big),     
\end{aligned}
\end{multline}
where we used 
\begin{align*}
\rho_{i,j}=\mu(\cS\cG_{i,j}\tc \cG
)\le{}& (1+o(1))\mu_{\L_{i,j}}(\SG_{i,j}\tc \cG(\L_{i,j}))\\
\le{}&(1+o(1))\m_{\L_{i,j}}(\SG_{i,j}\cap\cG(\L_{i,j}))
\end{align*}
to get the last inequality.
\begin{figure}
\centering
\begin{tikzpicture}[>=latex,x=1.0cm,y=1.0cm]
\draw [color=gray,dashed, xstep=1.0cm,ystep=1.0cm] (-1,-1) grid (3,3);
\fill[fill=black,fill opacity=0.25] (1,1) -- (2,1) -- (2,0) -- (1,0) -- cycle;
\draw (-0.73,2.74)-- (-0.73,-0.48);
\draw (-0.73,-0.48)-- (2.5,-0.48);
\draw (2.5,-0.48)-- (2.5,2.74);
\draw (2.5,2.74)-- (-0.73,2.74);
\draw[->>] (1,0.5)-- (0,0.5);
\draw[->>] (1.5,1)-- (1.5,2);
\draw[->>] (1.5,2)-- (1.5,2.74);
\draw[->>] (1.5,0)-- (1.5,-0.48);
\draw[->>] (0,0.5)-- (-0.73,0.5);
\draw[->>] (2,0.5)-- (2.5,0.5);
\draw(1.5,0.5) node {$Q_i$};
\draw (0,2) node {$\L_{i,j}$};
\end{tikzpicture}
\caption{\label{fig:obs:SG1}Illustration of Observation \ref{obs:SG1}. The shaded square of shape $\L^{(2N_\d)}$ is $\SG$ and the arrows indicate the presence of an infection in each row/column, as guaranteed by $\cG(\L_{i,j})$ with $\L_{i,j}$ being the larger square of shape $\L^{(2N)}$. Observation \ref{obs:SG1} asserts that these events combined imply $\SG(\L_{i,j})$ (see Figure \ref{fig:super-good}). The dashed lines delimit the boxes $Q_k$.}
\end{figure}
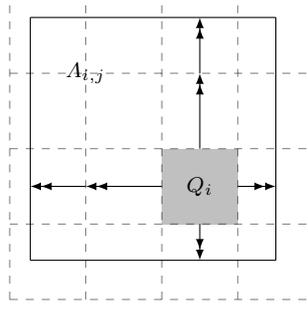

By applying Proposition \ref{thm:droplet} to the term $\var_{\L_{i,j}}(f \tc
\cS\cG(\L_{i,j}))$ and using that $c_x^{\L_{i,j}}\le c_x^{\bbT_n^2}$, we conclude that
\begin{multline*}
\mu_{\bbT_n^2}\big(\1_{\SG_{i,j}}
\var_{Q_i\cup Q_j}(f\tc \cS\cG_{i,j})\tc\cE\big)\\
\begin{aligned}\le{}&\begin{multlined}[t]
e^{O(\log^{3}(1/q)/\sqrt{q})}\m_{\L_{i,j}}(\SG(\L_{i,j}))\\
\times \sum_{x\in \L_{i,j}}\mu_{\bbT_n^2}\big(
      \mu_{\L_{i,j}}\big(c_x^{\L_{i,j}}\var_x(f)\tc
      \SG(\L_{i,j})\big)\tc \cG(\L_{i,j}^c)\big)
      \end{multlined}\\
 \le{}& e^{O(\log^{3}(1/q)/\sqrt{q})}\sum_{x\in \L_{i,j}}\mu_{\bbT_n^2}\big(\1_{\SG(\L_{i,j})}c_x^{\bbT_n^2}\var_x(f)\big),\end{aligned}
\end{multline*}
where we used $\m_{\bbT^2_n}(\cG(\L_{i,j}^c))=1-o(1)$ in the last inequality.
\end{proof}

\appendix
\section{Probability of super-good events}
\label{app:BP}
In this appendix we prove Proposition \ref{prop:probadroplet} and we gather several more technical and relatively standard bootstrap percolation estimates on the probability of super-good events used in Section \ref{sec:droplet}.

For $z>0$ we define
\[g(z)=-\log\big(\beta(1-e^{-z})\big),\]
where $\beta(u)=(u+\sqrt{u(4-3u)})/2$. It is known  \cite{Holroyd03}*{Proposition 5(ii)} that $\int_0^\infty g(z)\,dz=\pi^2/18$. We next recall some straightforward properties of $g$.
\begin{fact}
\label{fact:g}
The function $g$ is positive, decreasing, differentiable and convex on $(0,\infty)$. Moreover, the following asymptotic behaviour holds:
\begin{align*}
    g(z)\sim{}&\frac{1}{2}\log (1/z), &g'(z)\sim{}&\frac{-1}{2z},&\text{as }z\to{}&0,\\
    g(z)\sim{}&e^{-2z},&g'(z)\sim{}&-2e^{-2z},&\text{as }z\to{}&\infty,
\end{align*}
where $x\sim y$ stands for $x=(1+o(1))y$.
\end{fact}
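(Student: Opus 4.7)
The plan is to reduce everything to an analysis of the algebraic function $\beta(u)=(u+\sqrt{u(4-3u)})/2$ on $[0,1]$ and then transfer the conclusions to $g$ through the composition $g(z)=-\log\beta(1-e^{-z})$. Since $u(4-3u)>0$ on $(0,4/3)$, the function $\beta$ is smooth on $(0,1)$, so $g$ is automatically differentiable on $(0,\infty)$, where $1-e^{-z}\in(0,1)$.

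For positivity and monotonicity, I first show that $\beta$ maps $[0,1]$ strictly increasingly onto itself. Direct computation gives $\beta'(u)=\tfrac12+(2-3u)/(2\sqrt{u(4-3u)})$, which is clearly positive for $u\le 2/3$; for $u\in(2/3,1)$ the inequality $\beta'(u)>0$ is equivalent, after squaring, to $(3u-1)(u-1)<0$, which holds. Hence $\beta(u)\in(0,1)$ for $u\in(0,1)$, so $g>0$, and since $z\mapsto 1-e^{-z}$ is increasing, $g$ is strictly decreasing.

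For the asymptotics I substitute the expansions of $u(z)=1-e^{-z}$. Near $z=0$, $u\sim z$ and $\beta(u)=u/2+\sqrt u\sqrt{1-3u/4}\sim\sqrt u$, giving $g(z)\sim\tfrac12\log(1/z)$; from $g'(z)=-(1-u)\beta'(u)/\beta(u)$ and $\beta'(u)\sim 1/(2\sqrt u)$, I obtain $g'(z)\sim -1/(2z)$. As $z\to\infty$, I set $\varepsilon=e^{-z}=1-u$ and expand $\sqrt{u(4-3u)}=\sqrt{1+2\varepsilon-3\varepsilon^2}=1+\varepsilon-2\varepsilon^2+O(\varepsilon^3)$ to obtain $\beta(1-\varepsilon)=1-\varepsilon^2+O(\varepsilon^3)$, whence $g(z)=e^{-2z}+O(e^{-3z})$. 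A parallel expansion $\beta'(1-\varepsilon)=2\varepsilon+O(\varepsilon^2)$ then yields $g'(z)\sim-2e^{-2z}$.

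The main obstacle is convexity, which does not follow formally from the other properties. Setting $G(u)=-\log\beta(u)$ and using $u'(z)=1-u$ and $u''(z)=-(1-u)$, I have
\[
g''(z)=(1-u)^{2} G''(u)-(1-u)G'(u),
\]
so convexity reduces to $(1-u)G''(u)\ge G'(u)$ on $(0,1)$, equivalently
\[
(1-u)\bigl((\beta')^{2}-\beta\beta''\bigr)+\beta\beta'\ge 0.
\]
To verify this, I plan to exploit the implicit algebraic identity $\beta^{2}=u(1+\beta-u)$ (a mere rearrangement of the definition of $\beta$): differentiating once yields $\beta'=(1+\beta-2u)/(2\beta-u)$, and a second differentiation expresses $\beta''$ as an explicit rational function of $u$ and $\beta$. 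Substituting these into the displayed inequality reduces convexity to an elementary algebraic inequality in $u$ and $\beta$ subject to the constraint $\beta^{2}-u\beta+u^{2}-u=0$, which can be settled by direct manipulation on $(0,1)$.
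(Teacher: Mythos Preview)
The paper does not prove this statement at all: it is labeled a ``Fact'' and introduced as ``straightforward properties of $g$'' with no argument given. So there is no proof to compare against, and your write-up already supplies far more detail than the paper.

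Your treatment of differentiability, positivity, monotonicity, and all four asymptotic relations is correct and carefully done; the expansions near $z\to\infty$ via $\varepsilon=e^{-z}$ are clean and accurate.

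The one genuine gap is convexity. You correctly reduce $g''\ge 0$ to the inequality
\[
(1-u)\bigl((\beta')^{2}-\beta\beta''\bigr)+\beta\beta'\ge 0\quad\text{on }(0,1),
\]
and you correctly note the algebraic relation $\beta^{2}=u(1+\beta-u)$ (equivalently $(2\beta-u)^{2}=u(4-3u)$), which gives $\beta'=(1+\beta-2u)/(2\beta-u)$ after one differentiation. But you then stop at ``can be settled by direct manipulation'' without actually performing it. This is precisely the step that requires work: once you differentiate again to get $\beta''$ as a rational function of $u$ and $\beta$ and substitute, you must still check the sign of a polynomial expression in $(u,\beta)$ on the curve $\beta^{2}-u\beta+u^{2}-u=0$, and that verification is not automatic. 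Either carry the computation through explicitly, or cite that the convexity of $g$ is established in Holroyd's original paper \cite{Holroyd03} (where this function first appears). As written, the convexity claim is asserted rather than proved.
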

The relevance of this function comes from its link to the probability of traversability.  Recalling Definition \ref{def:doubleg}, for any positive integers $a$ and $b$ we set
\begin{align*}
T^{\mathbf 1}(a,b)&{}=\m(\cT_{\rightarrow}^{{\mathbf 1}}(R(a,b))),&
T^{\mathbf 0}(a,b)&{}=\mu(\cT_{\rightarrow}^{\mathbf 0}(R(a,b)),
\end{align*}
where $\mathbf 0$ stands for the fully infected configuration. Note that these probabilities are the same for left-traversability, while for up or down-traversability $a$ and $b$ are inverted in the r.h.s. The next lemma follows easily from \cite{Holroyd03}*{Lemma 8}. Let $q'=-\log(1-q)=q+O(q^2)$.
\begin{lem}
\label{lem:T:Holroyd}
For any positive integers $a$ and $b$ and $\o\in\{\mathbf 0,\mathbf 1\}$ we have
\[T^{\o}(a,b)=q^{O(1)}e^{-ag(bq')}.\]
\end{lem}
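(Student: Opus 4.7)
The strategy is to translate right-traversability into a ``no two consecutive zeros'' problem for an i.i.d.\ Bernoulli sequence and then solve it by diagonalising a $2\times 2$ transfer matrix whose largest eigenvalue turns out to be exactly $\beta(p_b)$.

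First I would observe that $\cT_\rightarrow^\omega(R(a,b))$ depends on $\eta\cdot\omega$ only through the column indicators $X_i=\mathbf 1\{\exists y\in\text{column $i$ of }R(a,b)\cup\partial_r R(a,b):(\eta\cdot\omega)_y=0\}$, $i\in[a+1]$. Under $\mu$, the variables $X_1,\dots,X_a$ are i.i.d.\ $\mathrm{Bernoulli}(p_b)$ with $p_b:=1-(1-q)^b=1-e^{-bq'}$, while $X_{a+1}$ is deterministic: $X_{a+1}=0$ when $\omega=\mathbf 1$ and $X_{a+1}=1$ when $\omega=\mathbf 0$. Unpacking Definition \ref{def:doubleg}, $\cT_\rightarrow^\omega(R(a,b))$ is then precisely the event that no two consecutive zeros occur in $X_1,\dots,X_{a+1}$.

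Next, set $\phi_n(s):=\bbP(\text{no two consecutive zeros in } X_1,\dots,X_n,\;X_n=s)$. Conditioning on $X_{n+1}$ yields $(\phi_{n+1}(1),\phi_{n+1}(0))^T = M(\phi_n(1),\phi_n(0))^T$ with
\[M=\begin{pmatrix} p_b & p_b \\ 1-p_b & 0\end{pmatrix}.\]
Its characteristic polynomial $\lambda^2-p_b\lambda-p_b(1-p_b)=0$ has largest root precisely $\beta(p_b)=(p_b+\sqrt{p_b(4-3p_b)})/2$, so $\log\beta(p_b)=-g(bq')$ by the definition of $g$. Diagonalising $M$ from the initial data $\phi_1(1)=p_b$, $\phi_1(0)=1-p_b$ produces the closed form $\phi_a(s)=A(s)\beta(p_b)^a+B(s)\lambda_-^a$, with $\lambda_-=p_b-\beta(p_b)<0$ the second eigenvalue. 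A short explicit calculation gives $A(1)=\beta(p_b)/(2\beta(p_b)-p_b)\in[1/2,1]$ and $B(1)=1-A(1)$ uniformly in $p_b\in(0,1]$, and analogous explicit formulae for $s=0$.

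Since $T^{\mathbf 1}(a,b)=\phi_a(1)$ (because $X_{a+1}=0$ forces $X_a=1$) and $T^{\mathbf 0}(a,b)=\phi_a(0)+\phi_a(1)$, the upper bound $T^\omega(a,b)\le q^{-O(1)}\beta(p_b)^a$ is immediate from $|\lambda_-|\le\beta(p_b)$ and $|A(s)|,|B(s)|\le q^{-O(1)}$. The matching lower bound is the main technical point, because $|\lambda_-|/\beta(p_b)=1-p_b/\beta(p_b)=1-\Theta(\sqrt{p_b})$ is close to $1$ and so the subdominant term does not immediately decay in $a$. I would handle it by exploiting the sign alternation $\lambda_-^a=(-|\lambda_-|)^a$: for even $a$, both terms in $\phi_a(1)$ are non-negative so $\phi_a(1)\ge A(1)\beta(p_b)^a\ge\beta(p_b)^a/2$; for odd $a$, expanding $(1-p_b/\beta(p_b))^a\le 1-a\cdot\Theta(\sqrt{p_b})$ (valid for $a\sqrt{p_b}\lesssim 1$) together with $A(1)+B(1)=1$ yields $\phi_a(1)\ge\Omega(a\sqrt{p_b})\beta(p_b)^a\ge\Omega(\sqrt{q})\beta(p_b)^a$, while for larger $a$ the leading term dominates directly. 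Combined, this gives $T^\omega(a,b)\ge q^{O(1)}\beta(p_b)^a$ uniformly in $a\ge 1$. As noted, this whole computation is essentially the content of \cite{Holroyd03}*{Lemma 8} up to a trivial change of boundary condition, to which one may defer for the detailed bookkeeping.
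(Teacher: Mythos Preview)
Your proof is correct and is precisely the approach the paper intends: the paper does not give its own argument but simply cites \cite{Holroyd03}*{Lemma 8}, and what you have written is exactly that transfer-matrix computation (reduction to i.i.d.\ column indicators, identification of $\beta(p_b)$ as the Perron eigenvalue, and control of the coefficients). One minor simplification: in the odd-$a$ lower bound you do not need the case split on $a\sqrt{p_b}$; since $B(1)>0$ and $|\lambda_-|\le\lambda_+$, one has directly $\phi_a(1)/\lambda_+^a\ge A(1)-B(1)=2A(1)-1=p_b/\sqrt{p_b(4-3p_b)}\ge\sqrt{q}/2$ for all odd $a$, which already gives the $q^{O(1)}$ lower bound.
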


\begin{cor}
\label{cor:T:ratio}
For any positive integers $a$ and $b$ we have
\begin{equation}
\label{eq:T:ratio}\max_{0\le s,s'\le a}\frac{T^{\mathbf 0}(s,b)T^{\mathbf 0}(a-s,b)}{T^{\mathbf 1}(s',b)T^{\mathbf 1}(a-s',b)}\le q^{-O(1)}.\end{equation}
Furthermore, for any boundary conditions $\o, \o'$ and rectangle $R$ of class $1\le n\le 2N$ (recall Definitions \ref{def:scales} and \ref{def:supergood}), we have
\begin{equation}
\label{eq:SG:position}\m_R(\SG^{\o}_s(R)\tc\SG^{\o'}(R))\ge q^{O(1)}
\end{equation}
uniformly over all possible values of $s$ and boundary conditions $\o,\o'$.
\end{cor}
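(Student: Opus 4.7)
The plan is to deduce both bounds from Lemma \ref{lem:T:Holroyd} together with some elementary bookkeeping of $q^{O(1)}$ factors. For \eqref{eq:T:ratio}, substituting $T^{\mathbf 0}(c,b) = q^{O(1)} e^{-c\,g(bq')}$ and the analogous expression for $T^{\mathbf 1}$ into both products, the exponents add up to $-a\,g(bq')$ in the numerator (independently of $s$) and likewise in the denominator (independently of $s'$), so the exponential factors cancel and what remains is a product of four $q^{O(1)}$ prefactors, which is bounded by $q^{-O(1)}$. The degenerate cases $s\in\{0,a\}$ are handled under the convention $T^\o(0,b)=1$, consistent with the formula.

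For \eqref{eq:SG:position}, the first step is to extend Lemma \ref{lem:T:Holroyd} from $\o\in\{\mathbf 0,\mathbf 1\}$ to arbitrary boundary conditions. The monotonicity sandwich $\cT^{\mathbf 1}(R)\subset\cT^\o(R)\subset\cT^{\mathbf 0}(R)$ combined with the lemma immediately yields $\mu(\cT^\o_\rightarrow(R(a,b))) = q^{O(1)} e^{-a\,g(bq')}$ uniformly in $\o$, and similarly for the other three directions (after swapping $a$ and $b$). I will then treat only the even case $n=2m$; the odd case is identical up to exchanging rows and columns. By Definition \ref{def:supergood} and the disjointness of the three sets of sites involved in $\SG^\o_s(R)$,
\[\m_R\big(\SG^\o_s(R)\big)=\mu\big(\cT^\o_\downarrow(C_s)\big)\cdot\mu\big(\SG^{\mathbf 1}(\Lambda^{(n-1)})\big)\cdot\mu\big(\cT^\o_\uparrow(D_s)\big),\]
with the middle factor independent of $s$ and $\o$ by translation invariance.

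Substituting the extended Lemma \ref{lem:T:Holroyd} into the two traversability factors, their exponents $-s\,g(\ell_m q')$ and $-(a_2-s-\ell_{m-1})\,g(\ell_m q')$ sum to the $s$-independent quantity $-(a_2-\ell_{m-1})\,g(\ell_m q')$, so $\m_R(\SG^\o_s(R))$ equals a uniform $q^{O(1)}$ times an $s$- and $\o$-independent factor. A union bound gives $\m_R(\SG^{\o'}(R))\le\sum_{s'}\m_R(\SG^{\o'}_{s'}(R))$, and since the number of valid offsets $s'$ is at most $\ell_m+1\le\ell_N+1=q^{-O(1)}$ by \eqref{defellN}, the denominator is bounded above by another uniform $q^{-O(1)}$ times the same $s$- and $\o$-independent factor. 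Taking the ratio then gives \eqref{eq:SG:position}.

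The whole argument is essentially algebraic manipulation of the sharp asymptotics provided by Lemma \ref{lem:T:Holroyd}, so no serious obstacle is expected. The only point that deserves care is checking that the implicit prefactor in the lemma is genuinely uniform over the range of $a$ and $b$ arising from the two subrectangles $C_s,D_s$ (whose dimensions depend on $s$ and can in particular vanish at the extremes); this uniformity is built into the citation from \cite{Holroyd03}*{Lemma 8}.
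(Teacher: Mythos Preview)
Your proposal is correct and follows essentially the same route as the paper's proof: both deduce \eqref{eq:T:ratio} directly from the cancellation of exponents in Lemma \ref{lem:T:Holroyd}, and both obtain \eqref{eq:SG:position} by factoring $\mu_R(\SG^\o_s(R))$ into traversability probabilities times the core probability, invoking the lemma to see that the result is $q^{O(1)}$ times an $(s,\o)$-independent quantity, and then using a union bound over the $q^{-O(1)}$ many offsets in the denominator. The paper's version is simply terser---it cites \eqref{eq:T:ratio} for the ratio step rather than recomputing the exponents---and treats the odd case rather than the even one, but the content is the same.
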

\begin{proof}
Equation \eqref{eq:T:ratio} follows immediately from Lemma \ref{lem:T:Holroyd}. To obtain \eqref{eq:SG:position} with $n$ odd (the even case is treated identically), recall that 
\[\SG^{\o'}(R)=\bigcup_{s'}\SG^{\o'}_{s'}(R);\]
there are $q^{-O(1)}$ possible values of $s'$; by \eqref{eq:T:ratio},  for all $s$, $s'$, $\o$ and $\o'$, \[\m_R(\SG^{\o}_s)/\m_R(\SG^{\o'}_{s'})\ge q^{O(1)}.\qedhere\]
\end{proof}
We are now ready for the main result of this appendix. \begin{proof}[Proof of Proposition \ref{prop:probadroplet}]
We will prove the same bound for the super-good event occurring with all $s=0$ in Definition \ref{def:supergood} on all scales, \emph{i.e.}\ the initial infection $\L^{(0)}$ being in the bottom-left corner of $\L^{(n)}$. Once the offsets are fixed, it suffices to prove the bound on this probability for $n=2N$, in which case it reads
\begin{multline}
q\prod_{m=1}^{N}T^{\mathbf 1}(\ell_m-\ell_{m-1},\ell_{m})T^{\mathbf 1}(\ell_{m}-\ell_{m-1},\ell_{m-1})
\\=q^{O(N)}\exp\Big(-\sum_{m=1}^N (\ell_m-\ell_{m-1})(g(q'\ell_{m})+g(q'\ell_{m-1}))\Big),
\label{eq:SG:bound:auxiliary}
\end{multline}
by Lemma \ref{lem:T:Holroyd} and symmetry. Since $g$ is decreasing, the last sum is at most
\[
2\sum_{m=1}^{\infty} (\ell_m-\ell_{m-1})g(q'\ell_{m-1}).\]
The term for $m=1$ is $O(\log(1/q)/\sqrt{q})$ by Fact \ref{fact:g}. For the other terms we use that by convexity for any $0<a<b$
\[(b-a)g(a)\le\int_{a}^bg(z)\, dz - O((b-a)^2g'(a)).\]
Using Fact 
\ref{fact:g}, we get
\[-(b-a)^2g'(a)\le O((b-a))^2\times\begin{cases}
1/a&\text{if $a=O(1)$}\\
e^{-a} &\text{if $a=\Omega(1)$.}
\end{cases}\]

Finally, for $m\ge 2$ we have $\ell_m-\ell_{m-1}\le 2\sqrt q \ell_{m-1}$ by \eqref{def:fn}, so
\begin{align*}
q'\sum_{m=2}^{m_0}\frac{(\ell_m-\ell_{m-1})^2}{\ell_{m-1}}\le O(q'\sqrt q)\sum_{m=2}^{m_0}(\ell_m-\ell_{m-1})= O(q^{3/2}\ell_{m_0})={}&O(\sqrt{q})\\
(q')^{2}\sum_{m=m_0+1}^{\infty}(\ell_{m}-\ell_{m-1})^2e^{-q'\ell_{m-1}}\le O(q^3)\sum_{m=m_0+1}^{\infty}\ell_{m-1}^2e^{-q'\ell_{m-1}}={}&O(\sqrt q),
\end{align*}
setting $m_0=\max\{m,\ell_m\le 1/q\}$. Putting these bounds together and recalling \eqref{defn}, we obtain that the r.h.s.\ of \eqref{eq:SG:bound:auxiliary} is at least
\[\exp\Big(\frac{-2}{q'}\big(\int_{0}^\infty g(z)\, dz+O(\sqrt q)\log(1/q))\big)-\frac{O(\log^2(1/q))}{\sqrt q}\Big)=\exp\Big(-\frac{\pi^2}{9q}-\frac{O(\log^2(1/q))}{\sqrt q}\Big).\]
This concludes the proof of Proposition \ref{prop:probadroplet}.
\end{proof}

We next turn to the event $\overline\SG(V_2)$ from Definition \ref{def:SGbar} required in the proof of Lemma \ref{lem:lin}, so we fix $n=2m\in[2,2N)$.
\begin{lem}
 \label{lem:SGbar}
Recalling \eqref{overlineF1}, we have
\[\m_{\L^{(n)}}(\hat\cC_{1,2}\tc\SG)\ge q^{-O(1)}.\]
\end{lem}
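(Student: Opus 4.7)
The plan is to split the ratio $\mu_{\L^{(n)}}(\hat\cC_{1,2}\tc\SG)$ into two independent contributions: one coming from the event $\{\eta_2\in\overline{\SG}(V_2)\}$ and another coming from the additional constraint $\{\eta_{I_1(\eta_2)}\neq\mathbf 1\}$ on $\eta_1$.

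The second contribution will be straightforward. The segment $I_1(\eta_2)\subset V_1$ is disjoint from $V_2$ while being $\eta_2$-measurable, so by the product structure of $\mu_{\L^{(n)}}$, conditionally on any $\eta_2\in\overline{\SG}(V_2)$ the sites of $I_1(\eta_2)$ are i.i.d.\ Bernoulli$(1-q)$. Since $|I_1(\eta_2)|=\ell_{m-1}\ge\ell_0=1$, this yields $\mu_{\L^{(n)}}(\eta_{I_1(\eta_2)}\neq\mathbf 1\tc\eta_2)\ge q$, and therefore $\mu_{\L^{(n)}}(\hat\cC_{1,2})\ge q\cdot\mu_{V_2}(\overline{\SG}(V_2))$.

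The first contribution is the main task: show $\mu_{V_2}(\overline{\SG}(V_2))\ge q^{O(1)}\mu_{\L^{(n)}}(\SG)$. The strategy is to evaluate both probabilities explicitly via Lemma \ref{lem:T:Holroyd}. Once the positions of the nested cores are fixed, the events $\SG_s^{\mathbf 1}(\L^{(n)})\cap\SG_{s'}^{\mathbf 1}(\L^{(n-1)}+s\vec e_2)$ and $\overline{\SG}_{s_1,s_2}(V_2-\vec e_1)$ each factorise as a product of $\mu(\SG(\L^{(n-2)}))$ and four independent traversability probabilities on disjoint rectangles. The denominator will be bounded above by a union bound over the $q^{-O(1)}$ possible offset pairs $(s,s')$, while the numerator will be bounded below by picking a single $(s_1,s_2)$. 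Matching $s=s_2$ and $s'=s_1$, the $\mu(\SG(\L^{(n-2)}))$ factors cancel and the ratio reduces to
\[q^{O(1)}\cdot e^{g(\ell_{m-1}q')}\cdot\exp\bigl((\ell_m-\ell_{m-1})\bigl[g(\ell_mq')-g((\ell_m-1)q')\bigr]\bigr).\]
The first exponential factor is at least $1$ by positivity of $g$.

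The main obstacle is to bound the remaining exponential factor from below by an absolute constant, uniformly in $m\le N$. Monotonicity and convexity of $g$ (Fact \ref{fact:g}) give $g(\ell_mq')-g((\ell_m-1)q')\ge -q'|g'((\ell_m-1)q')|$. Using the asymptotic behaviour $|g'(z)|=\Theta(1/z)$ as $z\to 0$ and $|g'(z)|=\Theta(e^{-2z})$ as $z\to\infty$ from Fact \ref{fact:g}, a case split on whether $\ell_mq'\le 1$ or $\ell_mq'>1$ will show that $(\ell_m-\ell_{m-1})\,q'\,|g'((\ell_m-1)q')|=O(1)$ in both regimes. Combining this uniform bound with the two previous reductions yields the claim.
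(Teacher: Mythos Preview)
Your proposal is correct and follows essentially the same approach as the paper: both arguments split off the trivial $\ge q$ contribution from $\{\eta_{I_1}\neq\mathbf 1\}$, then compare $\mu_{V_2}(\overline{\SG}_{0,0}(V_2))$ to a single fixed-offset super-good probability $\mu_{\L^{(n)}}(\SG_{s_1,s_2})$ via Lemma~\ref{lem:T:Holroyd}, and finally bound the resulting exponent $(\ell_m-\ell_{m-1})[g((\ell_m-1)q')-g(\ell_mq')]$ using convexity of $g$ and the asymptotics of $g'$ from Fact~\ref{fact:g}. The only cosmetic differences are that the paper compares against $\SG_{1,0}$ rather than $\SG_{0,0}$ (which makes one traversability factor cancel exactly instead of producing your harmless $e^{g(\ell_{m-1}q')}\ge 1$), and the paper invokes the scaling $\ell_m-\ell_{m-1}\le 2\sqrt q\,\ell_{m-1}$ to get $O(\sqrt q)$ in the exponent, whereas your case split only yields $O(1)$, which is all that is needed.
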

\begin{proof}
Recall that $V_1\cup V_2=\L^{(n)}$ and assume $\SG(\L^{(n)})$ occurs. For any $0\le s_1,s_2\le \ell_{m}-\ell_{m-1}$ we write 
\[\SG_{s_1,s_2}(\L^{(n)})=\SG_{s_2}(\L^{(n)})\cap\SG_{s_1}(\L^{(n-1)}+s_2\vec e_2).\]
Then by Corollary \ref{cor:T:ratio} for any such $s_1,s_2$ we have
\[\mu_{\L^{(n)}}(\SG_{s_1,s_2}(\L^{(n)}))=\mu_{\L^{(n)}}(\SG(\L^{(n)}))q^{O(1)},\]
so it suffices to show that 
\[\m_{V_2}(\overline\SG_{0,0}(V_2))\ge \m_{\L^{(n)}}(\SG_{1,0}(\L^{(n)}))q^{O(1)},\]
since $\m(\cT_{\leftarrow}(I_1(\eta_{V_2})))\ge q$ for any $\eta_{V_2}\in\overline\SG(V_2)$.

However, by Definitions \ref{def:supergood} and \ref{def:SGbar} and symmetry we have
\begin{align*}\frac{\m_{V_2}(\overline\SG_{0,0}(V_2))}{\m_{\L^{(n)}}(\SG_{1,0}(\L^{(n)}))}={}&\frac{T^{\mathbf 1}(\ell_{m}-\ell_{m-1}-1,\ell_{m-1})T^{\mathbf 1}(\ell_m-\ell_{m-1},\ell_m-1)}{T^{\mathbf 1}(\ell_{m}-\ell_{m-1}-1,\ell_{m-1})T^{\mathbf 1}(\ell_m-\ell_{m-1},\ell_m)T^1(1,\ell_{m-1})}\\
\ge{}&\frac{T^{\mathbf 1}(\ell_m-\ell_{m-1},\ell_m-1)}{T^{\mathbf 1}(\ell_m-\ell_{m-1},\ell_m)}=q^{O(1)}e^{-(\ell_{m}-\ell_{m-1})(g((\ell_{m}-1)q')-g(\ell_mq'))},
\end{align*}
the last equality following from Lemma \ref{lem:T:Holroyd}.

By convexity of $g$ we get
\begin{equation}
\label{eq:g:increment}
g((\ell_m-1)q')-g(\ell_mq')\le -q'g'((\ell_m-1)q').
\end{equation}
By Fact \ref{fact:g} we have that the r.h.s.\ of \eqref{eq:g:increment}  is $O(1/\ell_m)$. Putting this together we obtain
\begin{equation}
\label{eq:muxiSG}\frac{\m_{V_2}(\overline\SG_{0,0}(V_2))}{\m_{\L^{(n)}}(\SG_{1,0}(\L^{(n)}))}\ge q^{O(1)}e^{-O(\ell_m-\ell_{m-1})/\ell_m}\ge q^{O(1)}e^{-O(\sqrt q)}=q^{O(1)},
\end{equation}
as desired, the second inequality coming from \eqref{def:fn} as in the proof of Proposition \ref{prop:probadroplet}.
\end{proof}

\section{Proof of Proposition \ref{prop:g-CBSEP}}\label{app:CBSEP}
Let $(\cS,\cS_1,\p)$ be the parameters of $g$-CBSEP on $\bbT_n^d$ and let $\ell=\lceil \p(\cS_1)^{-1/d}\rceil\ge 2$.  For simplicity we assume that $n^{1/d}/\ell \in \bbN$ and we partition the torus $\bbT_n^d$ into $M=(n/\ell)^d$ equal boxes $(B_j)_{j=1}^M$, where each $B_j$ is a suitable lattice translation by a vector in $\bbT_n^d$ of the box $B=[\ell]^d$. The labels of the boxes can be thought of as belonging to $\bbT_M^d$ and we say that $B_i,B_j$ are neighbouring boxes in $\bbT_n^d$ iff $i,j$ are nearest neighbours  in $\bbT_M^d$.

We then set $\hat
\cS= \cS^B, \hat \p((\s_x)_{x\in B})=\bigotimes_{x\in B}\p(\s_x),\hat
\cS_1=\bigcup_{x\in B}\{\s_x\in \cS_1\}$ and we consider the auxiliary renormalised $g$-CBSEP 
(in the sequel $\hat g$-CBSEP) on the graph
$\hat G= \bbT_M^d$ with parameters
$(\hat \cS, \hat \cS_1, \hat \p)$. Using the assumption $\lim_{n\to \infty}\p(\cS_1)=0$, we have that 
\[
\lim_{n\to \infty}\hat\p(\hat
\cS_1)=\lim_{n\to \infty}1-(1-\p(\cS_1))^{\ell^d}=1-e^{-1}.
\]
\begin{lem}
\label{lem:A1}Let  $\trel[\text{$\hat g$-CBSEP}]$ be the relaxation time of $\hat g$-CBSEP on $\widehat G$. Then there exists a constant $C=C(d)>0$ such that $\trel[\text{$\hat g$-CBSEP}]\le C$. 
\end{lem}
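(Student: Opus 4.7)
The plan is to exploit the crucial feature of the coarse-grained setup: the density $\hat p:=\hat\p(\hat\cS_1) = 1-(1-\p(\cS_1))^{\ell^d}$ converges to $1-e^{-1}$, so $\hat p$ is bounded uniformly away from both $0$ and $1$ for all sufficiently large $n$. In this ``bulk'' regime the constraint $\hat\cE_e$ is satisfied on every edge with probability $1-(1-\hat p)^2 = \Theta(1)$, so I expect $\hat g$-CBSEP to mix as fast as the unconstrained edge heat-bath on the product measure $\hat\p_{\hat G}$, namely in $O_d(1)$ time.

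The first step is to reduce from the measure $\hat\p_{\hat G}^+$ to the unconditional product measure $\hat\p_{\hat G}$. Since $\hat\p_{\hat G}(\hat\O_{\hat G}\setminus\hat\O_{\hat G}^+)=(1-\hat p)^M=o(1)$ as $M\to\infty$, these two measures agree on $\hat\O_{\hat G}^+$ up to a factor $1+o(1)$, and the $\hat g$-CBSEP Dirichlet form w.r.t.\ one differs from that w.r.t.\ the other by the same factor. On the product measure, the tensorisation of variance gives
\[\var_{\hat\p_{\hat G}}(f) \le \sum_{i\in \hat G}\hat\p_{\hat G}\big(\var_i(f)\big),\]
with $\var_i(f)$ the conditional variance over the single block state $\hat\s_i$. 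Hence it is enough to dominate each single-site term by a constant multiple of the edge contributions to the Dirichlet form.

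For this, fix a site $i\in \hat G$ together with a neighbour $j$, and apply the law of total variance inside the conditional measure $\hat\p_i\otimes\hat\p_j(\cdot\tc\hat\cE_{\{i,j\}})$ by further conditioning on $\hat\s_j$. On the event $\{\hat\s_j\in\hat\cS_1\}$, whose conditional probability equals $\hat p/(1-(1-\hat p)^2)=\Theta(1)$, the constraint $\hat\cE_{\{i,j\}}$ is automatic and the residual law of $\hat\s_i$ is simply $\hat\p_i$; therefore, pointwise in the external coordinates,
\[\var_{\{i,j\}}\big(f\tc\hat\cE_{\{i,j\}}\big) \;\ge\; \Theta(1)\cdot \bbE_{\hat\s_j\sim\hat\p_j(\cdot\tc\hat\cS_1)}\!\big[\var_i(f)\big].\]
Multiplying by $\hat\p(\hat\cE_{\{i,j\}})=\Theta(1)$ and integrating in the remaining coordinates produces an edge Dirichlet term that dominates a constant multiple of $\hat\p_{\hat G}(\var_i(f))$ up to the tilt of $\hat\s_j$ toward $\hat\cS_1$.

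The main obstacle will be precisely to absorb this tilt into a constant. If $\var_i(f)$ happens to be concentrated on configurations with $\hat\s_j\in\hat\cS_0$, the single edge $\{i,j\}$ cannot detect it. I would resolve this by using several of the $2d$ edges incident to $i$ simultaneously, or more robustly by a short two-step $\hat g$-CBSEP path that first uses a \emph{further} edge carrying a particle to install a particle at some neighbour of $i$ and only then updates $\{i,j\}$. Because $\hat p=\Theta(1)$, such paths have length $O_d(1)$ and constant probability, yielding only an additional $O_d(1)$ multiplicative factor and the desired Poincar\'e constant $C=C(d)$.
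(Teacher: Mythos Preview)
Your approach is genuinely different from the paper's and the obstacle you flag is real, but your proposed resolutions both have gaps.

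First, the suggestion to ``use several of the $2d$ edges incident to $i$ simultaneously'' cannot work. Take $A\subset\hat\cS_0$ nontrivial and set $f(\hat\s)=\1_{\{\hat\s_i\in A\}}\prod_{j\sim i}\1_{\{\hat\s_j\in\hat\cS_0\}}$. Then $\hat\p_{\hat G}(\var_i(f))=(1-\hat p)^{2d}\hat\p(A)(1-\hat\p(A))>0$, yet on every edge $e=\{i,j\}$ the event $\hat\cE_e$ forces $\hat\s_i\in\hat\cS_1$ or $\hat\s_j\in\hat\cS_1$, and in either case $f\equiv 0$; hence $\var_e(f\mid\hat\cE_e)=0$ for \emph{all} edges incident to $i$. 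So edges at distance $\ge 2$ from $i$ are indispensable.

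Second, the ``two-step path'' fix is stated too loosely: the nearest particle need not be within distance $O_d(1)$ of $i$ (you are on $\hat\O_{\hat G}^+$, so there is \emph{some} particle, but it may be far), so paths are not of bounded length; and each $\hat g$-CBSEP move resamples a full pair in $\hat\cS^2$, so the canonical path must specify how to place and later undo auxiliary internal states along the way, with a congestion analysis summing over paths of all lengths. This can be carried out (the exponential tail $(1-\hat p)^{\Theta(r^d)}$ for the distance to the nearest particle keeps congestion bounded), but it amounts to reproving from scratch that CBSEP on $\bbT_M^d$ with density $\Theta(1)$ has relaxation time $O(1)$, which is exactly the result the paper imports as a black box.

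The paper proceeds quite differently: it uses the projection $\varphi$ of Remark~\ref{rem:CBSEP} to reduce $\hat g$-CBSEP to CBSEP on $\hat G$ with parameter $\hat p$, quotes $\trel[\text{CBSEP}]=O(1)$ from \cite{Hartarsky22CBSEP}, and then lifts the bound back by showing that, conditionally on every block having been flipped at least once by the projected chain and on the current value of $\varphi(\hat\o(t))$, the internal states $(\hat\o_j(t))_j$ are independent with law $\hat\p(\cdot\mid\varphi(\hat\o(t))_j)$. This sidesteps the entire path/congestion analysis by isolating the $\{0,1\}$-valued part of the dynamics.
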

\begin{proof}
We closely follow \cite{Martinelli19a}*{Appendix A}.
Write $\hat \O_+$ for the space of $\hat g$-CBSEP configurations with at least one  particle and consider the projection $\varphi:\hat\O_+\mapsto \O_+$ given by $\varphi(\hat \o):=\{\1_{\{\hat\o_j\in \hat \cS_1\}}\}_{ j\in \hat G}.$ As discussed in Remark \ref{rem:CBSEP}, the projection of the $\hat g$-CBSEP chain is the CBSEP chain on $\hat G$ reversible w.r.t.\ $\p^+$, the product Bernoulli measure with parameter $p=\hat\p(\hat
\cS_1)$ conditioned on $\O_+$. 
For the latter, using $p=\Theta(1)$ as $n\to \infty$, it was proved in \cite{Hartarsky22CBSEP}*{Theorem 1} that its relaxation time $\trel[\text{CBSEP}]=O(1)$. Hence, it is enough to prove that $\trel[\text{$\hat g$-CBSEP}]\le C'\trel[\text{CBSEP}] $ for some constant $C'$.

Let $\hat\bbP_{\hat\o}(\cdot), \hat\bbE_{\hat\o}(\cdot)$ be the law and associated expectation of the $\hat g$-CBSEP chain with initial condition $\hat \o\in \hat \O_+$ and let $\bbP_{\eta}(\cdot), \bbE_{\eta}(\cdot)$ be the same objects for the projected chain (the CBSEP chain) with initial condition $\eta\in \O_+$. 

In order to prove the lemma, it is sufficient to prove that for any function $f:\hat\O_+\mapsto \bbR$ with zero mean w.r.t.\ $\hat\p^+$ and for any $\hat \o\in \hat \O_+$
the rate of exponential decay as $t\to +\infty$ of 
$|\hat\bbE_{\hat \o}(f(\hat\o(t)))|$ is at least $c/\trel[\text{CBSEP}]$ for some $c=c(p)>0$ independent of $f$ and $\hat \o$.

More formally, 
\[
\liminf_{t\to +\infty} -\frac 1t \log\big(|\hat\bbE_{\hat \o}(f(\hat\o(t)))|\big)\ge c/\trel[\text{CBSEP}]. 
\]
For any such $f$ write
\begin{align}\label{eq:B1}
& |\hat\bbE_{\hat \o}(f(\hat\o(t)))| \leq  \big|\hat\bbE_{\hat \o}\big(f(\hat\o(t)) \1_{\{\forall j\in\hat G,\,\t_j<t\}} \big) \big| 
+ \|f\|_\infty M\max_{j} \hat\bbP_{\hat \o}( \t_j\ge t), \end{align}
where $\t_j$ is  the first time such that $\varphi(\hat\o(t))_j\neq\varphi(\hat\o(0))$, which is measurable w.r.t.\ the projected chain.

It follows from standard tools for finite reversible Markov chains (see \emph{e.g.}\ \cite{Asselah01}*{Section 5} that 
there exists $K=K(\hat\o)<+\infty$ such that $\hat\bbP_{\hat \o}( \t_j\ge t)\le Ke^{-\l(j,\hat \o) t}
$
with
\[
\l(j,\hat \o)= \hat\pi^+\big(\hat\o':\ \varphi(\hat\o')_j\neq \varphi(\hat \o)_j\big) /\trel[\text{CBSEP}]\ge (p\wedge (1-p))/\trel[\text{CBSEP}].
\]
In particular, the rate of exponential decay as $t\to +\infty$ of the second term of the r.h.s.\ of \eqref{eq:B1} satisfies our requirement.

In order to prove a similar result for the first term in the r.h.s.\ of \eqref{eq:B1}, we observe that, conditionally on the event $\bigcap_j \{\t_j< t\}$ and on $\varphi(\hat\omega(t))$, the variables $(\hat\o_j(t))_{j\in \hat G}$ become independent with $\hat\o_j(t)\sim \hat \pi(\cdot |\varphi(\hat\omega(t))_j)$. Hence, if we set $g(\eta)=\hat\pi\big(f(\hat\o)|\varphi(\hat\o)=\eta\big),$ we get
\begin{align*}
\hat\bbE_{\hat \o}\big(f(\hat\o(t))\1_{\{\forall j\in\hat G,\,\t_j<t\}}\big)=
\bbE_{\varphi(\hat \o)}\big(g(\eta(t))\big) - \hat\bbE_{\varphi(\hat \o)}\big(g(\eta(t)) \1_{\{\exists j\in\hat G,\,\t_j\ge t\}}\big),
\end{align*}
so that
\begin{align*}
    \max_{\hat \o}\big|\hat\bbE_{\hat \o}\big(f(\hat\o(t))\1_{\{\forall j\in\hat G,\,\t_j<t\}}\big)\big|\le \max_\eta|\bbE_{\eta}\big(g(\eta(t))\big)| + \|f\|_\infty M\max_{j,\eta} \bbP_{\eta}( \t_j\ge t ).
\end{align*}
The rate of exponential decay as $t\to +\infty$ of both terms in the r.h.s.\ above is again at least $c/\trel[\text{CBSEP}]$ for some $c>0$, since $\pi^+(g)=\hat\pi^+(f)=0$.
\end{proof}

\begin{proof}[Proof of Proposition \ref{prop:g-CBSEP}]
For any pair of neighbouring boxes $B_i$ and $B_j$ we write $\hat \cE_{i,j}$ for the event $\bigcup_{x\in B_i\cup B_j}\{\s_x\in \cS_1\}$. Using Lemma \ref{lem:A1} and the definition of $\trel[\text{$\hat
  g$-CBSEP}]$ we get that
\begin{equation*}
\var_{\p^+_{\bbT_n^d}}(f)\le C \sum_{i\sim
    j}\p^+_{\bbT_n^d}\big(\1_{\hat\cE_{i,j}}\var_{B_i\cup B_j}(f\tc \hat\cE_{i,j})\big),
\end{equation*}
where the sum in the r.h.s.\ is an equivalent way to express the Dirichlet form of $\hat g$-CBSEP. Now fix a pair of adjacent boxes $B_i,B_j$ and let $\trel[\text{$g$-CBSEP}](i,j)$ be the relaxation time of our original $g$-CBSEP with parameters $(\cS,\cS_1,\p)$ on $B_i\cup B_j$. By symmetry $\trel[\text{$g$-CBSEP}](i,j)$ does not depend on $i,j$ and the common value will be denoted by $\trelt$. If we plug the Poincar\'e inequality for $g$-CBSEP on $B_i\cup B_j$
\begin{equation*}
\var_{B_i\cup B_j}(f\tc \hat\cE_{i,j})
  \le \trelt  \sum_{x\sim y\in B_i\cup B_j}\p_{B_i\cup B_j}^+\big(\1_{\cE_{x,y}}\var_{x,y}(f\tc \cE_{x,y})\big).
\end{equation*}
into the r.h.s.\ above, we get
\begin{align*}
\var_{\p^+_{\bbT_n^d}}(f)  \le{}& C\trelt \sum_{i\sim j}\sum_{x\sim y\in B_i\cup B_j}\p_{\bbT_n^d}^+\big(\1_{\hat\cE_{i,j}}\p_{B_i\cup B_j}^+\big(\1_{\cE_{x,y}}\var_{x,y}(f\tc \cE_{x,y})\big)\big)\\
 \le{}& 2dC\trelt \sum_{x\sim y \in \bbT_n^d}\p_{\bbT_n^d}^+\big(\1_{\cE_{x,y}}\var_{x,y}(f\tc \cE_{x,y})\big)\\
={}& 2dC\trelt\cD^{\text{$g$-CBSEP}}(f),
\end{align*}
where the second inequality uses $\1_{\hat\cE_{i,j}}\1_{\cE_{x,y}}=\1_{\cE_{x,y}}$ and \[\pi^+_{\bbT_n^d}\big(\1_{\hat\cE_{i,j}} \cdot\big)=\pi^+_{\bbT_n^d}(\hat\cE_{i,j})\pi_{\bbT_n^d\setminus(B_i\cup B_j)}\otimes \pi^+_{B_i\cup B_j}.\]
Thus, $\trel[\text{$g$-CBSEP}] \le O\big(\trelt\big)$.
It remains to bound $\trelt$ from above.

Let  
$\tmix[CBSEP]$ denote the mixing time of $g$-CBSEP on $B_{i}\cup B_{j}$ with
parameters $\cS'=\{0,1\},\cS'_1=\{1\}$ and $\pi'(1)=\p(\cS_1)=1-\pi'(0)$. Let $T_{\rm cov}^{\rm rw}$ be the cover time of the
continuous-time random walk on $B_{i}\cup B_{j}$. Theorem 2 of 
\cite{Hartarsky22CBSEP} implies $\trelt \le O(\tmix[CBSEP]+T_{\rm cov}^{\rm  rw})$. Moreover, it is well known (see \emph{e.g.}\ \cite{Levin09}) that $T_{\rm cov}^{\rm rw}$ is at most $O\big(\ell^d\log(\ell)\big)=O\big(\p(\cS_1)^{-1}\max(1,\log(1/\p(\cS_1)))\big)$
and \cite{Hartarsky22CBSEP}*{Corollary 3.1} proves\footnote{Strictly speaking
\cite{Hartarsky22CBSEP}*{Corollary 3.1} deals with the torus of cardinality $\p(\cS_1)^{-1}$ but
the same proof extends to our case of the box $B_{i}\cup B_{j}$.} the same bound for 
$\tmix[CBSEP]$. In conclusion,
\[\trelt \le
O\big(\p(\cS_1)^{-1}\max(1,\log(1/\p(\cS_1)))\big).\qedhere\]
\end{proof}
%
%

\begin{acknowledgements}
We acknowledge enlightening discussions with 
P.~Balister, B.~Bollob\'as, J.~Balogh, H.~Duminil-Copin, R.~Morris and P.~Smith
and the hospitality of IHES during the informal workshop in 2017 ``Kinetically constrained spin models and bootstrap percolation''.
On that occasion P. Balister suggested a flexible structure for the droplets, featuring freedom in the position of the internal core at all scales, which he conjectured would remove the spurious log-corrections in the bound \eqref{eq:FA2f:old} available at that time. We also thank the anonymous referees for the careful proofreading and  helpful comments on the presentation of the paper.
\end{acknowledgements}

\bibliographystyle{spmpsci}      
\bibliography{Bib}   

\end{document}